\documentclass[11pt,oneside]{amsart}
\usepackage[english]{babel}
\usepackage{fullpage}
\usepackage{amssymb,pstricks,amscd,epsfig}
\usepackage{graphicx}


\usepackage{amsmath, amsthm, amssymb}
\usepackage{pdfsync}
\usepackage[latin1]{inputenc}
\usepackage{stmaryrd}
\usepackage{color}
\usepackage{setspace}
\usepackage[matrix,arrow,tips,curve]{xy}

\usepackage{hyperref}

%
%

%
%
%
%
\usepackage{cite}


\newcommand{\Z}{\mathbb{Z}}
\newcommand{\Q}{\mathbb{Q}}

\newcommand{\C}{\mathbb{C}}


\newcommand{\Shhom}{\mc{H}om}

\DeclareMathOperator{\ext}{ext}
\newcommand{\mc}{\mathcal}

\newcommand{\ua}{\underline{\alpha}}


\newcommand{\Mf}{\mathfrak{M}}



\newcommand{\be}{\begin{equation}}
\newcommand{\ee}{\end{equation}}

\newcommand{\wt}{\widetilde}


\newtheorem{thm}{Theorem}

\newtheorem{prop}[thm]{Proposition}
\newtheorem{lemma}[thm]{Lemma}
\newtheorem{defin}[thm]{Definition}
\newtheorem{cor}[thm]{Corollary}

\theoremstyle{definition}

\newtheorem{rem}[thm]{Remark}

\newtheorem{example}[thm]{Example}

\numberwithin{thm}{section}



\numberwithin{equation}{section}




\DeclareMathOperator{\rk}{rk}
\DeclareMathOperator{\Hom}{Hom}
\DeclareMathOperator{\Ext}{Ext}

\DeclareMathOperator{\ch}{ch}
\DeclareMathOperator{\td}{td}


\DeclareMathOperator{\Pic}{Pic}


\DeclareMathOperator{\im}{Im}

\DeclareMathOperator{\Aut}{Aut}

\DeclareMathOperator{\Sing}{Sing}

\DeclareMathOperator{\Def}{Def}

\DeclareMathOperator{\Stab}{Stab}

\newcommand{\famsigma}{{\underline{\sigma}}}

\newcommand{\Ku}{\mc{K}u}


\author[G. Sacc\`a]{Giulia Sacc\`a}
\address{Mathematics Department\\
Columbia University \\
Mathematics Department\\
2990 Broadway\\
New York, NY 10027}

\title{Moduli spaces on Kuznetsov components are Irreducible Symplectic Varieties}


\begin{document}

\maketitle

\begin{abstract} This article studies moduli spaces of Bridgeland semistable objects in the Kuznetsov component of a cubic fourfold that don't admit a symplectic resolution, i.e., moduli spaces of objects with non-primitve Mukai vector $v=mv_0$  that is not of OG10-type and where $v_0^2 >0$. For a generic stability condition, it is shown that these moduli spaces are projective irreducible symplectic varieties with factorial terminal singularities and that their deformation class is uniquely determined by the  integers $m$ and $v_0^2$.
On the one hand, this generalizes the results of \cite{BLMNPS, Perry-Pertusi-Zhao, Li-Pertusi-Zhao}, which deal with moduli spaces of objects in the  Kuznetsov component of a cubic fourfold which are smooth or of OG10-type; on the other hand, this extends to the Kuznetsov component of a cubic fourfold the results of \cite{Perego-Rapagnetta-Irr, Perego-Rapagnetta-second-cohomology} on Gieseker moduli spaces of sheaves on K3 surfaces with non-primitive Mukai vector.

\end{abstract}

\section*{Introduction}

Since their appearance, moduli spaces of sheaves on K3 surfaces have been central objects in algebraic geometry. In \cite{Mukai84}, Mukai showed that moduli spaces of stable sheaves on a polarized K3 surface are smooth and carry a holomorphic symplectic structure. This generalizes the natural symplectic form on the Hilbert scheme of points on a K3 surface \cite{Beauville83, Fujiki}. If the Mukai vector encoding the Chern classes of the sheaves parametrized is primitive and the polarization is generic, then by the work of O'Grady, Yoshioka, and Huybrechts \cite{OGradyWt2,Yoshioka-moduli-abelian,Huybrechts2},  these moduli spaces are smooth  irreducible holomorpholomorphichic symplectic manifolds, deformation equivalent to Hilbert schemes of points on a K3 surface. By \cite{Toda, Bayer-Macri-Proj, Yoshioka-moduli-abelian}, the same holds for moduli spaces of Bridgeland stable objects, if the Mukai vector is primitive and the stability condition is generic. 

Recall that an irreducible holomorphic symplectic manifold is a compact K\"ahler manifold that is simply connected and whose space of holomorphic $2$-forms is spanned by a holomorphic symplectic form. By the Beauville-Bogomolov decomposition theorem, irreducible holomorphic symplectic manifolds are, together with complex tori and strict Calabi-Yau manifolds, the building blocks of compact K\"ahler manifolds with trivial first Chern class. The joint effort of several authors Druel--Greb--Guenancia--Höring--Kebekus--Peternell \cite{Druel-Guenancia-DT, Durel-DT, Greb-Guenancia-Kebekus, Horing-Peternell-foliations}, has resulted in a singular version of the decomposition theorem: it states that projective varieties with klt singularities and numerically trivial canonical class decompose, up to a finite cover that is \'etale in codimension one, into the product of abelian varieties, irreducible symplectic varieties, and irreducible Calabi-Yau manifolds. A normal variety is called symplectic if its smooth locus has a holomorphic symplectic form which extends to a regular holomorphic form on any resolution of singularities.
A  projective symplectic variety $X$  is called an irreducible symplectic variety if for all finite covers $f: Y \to X$ that are \'etale in codimension $1$, the algebra of holomorphic forms on $Y$ is generated by the reflexive pullback of a holomorphic symplectic form on $X$.

When the Mukai vector is not primitive there are two cases. Either $v=2v_0$, with $v_0^2=2$, in which case the moduli space admits a symplectic resolution \cite{OGrady99, Lehn-Sorger}. Such a resolution is deformation equivalent to O'Grady's 10-dimensional exception example \cite{Perego-Rapagnetta-OG10, Meachan-Zhang}.  Otherwise, there is no symplectic resolution:  Kaledin-Lehn-Sorger \cite{Kaledin-Lehn-Sorger} show that if the polarization is generic, the moduli spaces are singular symplectic varieties, with factorial terminal singularities. 
Perego-Rapagnetta prove in \cite{Perego-Rapagnetta-Irr} that these (Gieseker) moduli spaces are irreducible symplectic varieties and that, for fixed $m$ and $v_0^2$, these moduli spaces are all deformation equivalent.
 In \cite{Perego-Rapagnetta-second-cohomology} they further show that Mukai-Donaldson morphism identifies their second integral cohomology group with the orthogonal complement of the Mukai vector. 

Another way of constructing holomorphic symplectic varieties is by considering parameter spaces for curves on a smooth cubic fourfold \cite{Beauville-Donagi, LLSvS, LSV, Voisin-Twisted, IntJac}. Another approach is, moduli theoretic, via the Kuznetsov component. Let $X$ be a smooth cubic fourfold $X$. Its residual category $Ku(X)$ is defined to be the left semi-orthogonal complement of the exceptional collection $\langle \mc O_X,  \mc O_X(1),  \mc O_X(2) \rangle$. It was introduced by Kuznestov  \cite{Kuznetsov-cubic} and it is a so-called K3 category; as the name suggests, it behaves very much like the derived category of a K3 surface. In particular, by a generalized version of Mukai's theorem, moduli spaces of stable objects in $Ku(X)$ admit a holomorphic symplectic form \cite{Kuznetsov-Markushevich}. The existence of Bridgeland stability conditions on these categories was established in \cite{BLMS}; since then, the classical examples of irreducible holomorphic symplectic varieties associated with a smooth cubic fourfold have been interpreted as  moduli spaces of stable objects \cite{Lahoz-Lehn-Macri-Stellari, Li-Pertusi-Zhao-twisted-cubics}.
More generally, in  \cite{BLMNPS}  it is proved that moduli spaces of objects in $Ku(X)$ with primitive Mukai vector and $v$-generic stability condition are projective and they are irreducible holomorphic symplectic manifolds of K3$^{[n]}$-type. The case of moduli spaces of OG10-type is the main object of  \cite{Li-Pertusi-Zhao}. Note that while these OG$10$-type moduli spaces are birational to the twisted intermediate Jacobian fibration \cite{Voisin-Twisted, Li-Pertusi-Zhao}, for very general $X$, the non twisted intermediate Jacobian fibration is not birational to an OG$10$-type moduli space \cite{IntJac}.

The aim of this note is to study the remaining cases, namely, moduli spaces of objects in the Kuznetsov component of a cubic fourfold which do not admit a symplectic resolution.  For generic stability conditions, we show that the Bayer-Macri numerical divisor class on these moduli spaces is ample, so that they are  projective varieties (Theorem \ref{thmproj}) with factorial and terminal singularities (Theorem \ref{locally fact}), and that they are irreducible symplectic varieties (Theorem \ref{thmisv}). We also show that their deformation class is determined by $m$ and $v_0^2$,  and we compute their integral second cohomology group (Theorems \ref{thm defclassk3} and \ref{vperp}). In the last section (\S \ref{sect GM}) we mention the fact that, by using the results of \cite{Perry-Pertusi-Zhao} instead of \cite{BLMNPS}, one can prove the same results for moduli spaces of Bridgelend semistable objects with non-primitive class which is not of OG$10$-type in the Kuznetsov component of Gushel-Mukai varieties of dimension four and six.

In Section \ref{Sect Notation} we briefly recall the notation, basic definitions, and results needed. For sake of brevity instead of giving a complete treatment we give a road map to some of the main results on moduli stacks and moduli spaces of Bridgeland semistable objects that we will use, and refer to other texts for an introduction. 
The ampleness of the Bayer-Macri class is proved in Section \ref{sect ample}, using the tools developed in \cite{BLMNPS}, which we follow closely.
The  factoriality is proved in Section \ref{sect local}, adapting the techniques of \cite{Kaledin-Lehn-Sorger, Drezet, Drezet-Narasimhan} to the case of Bridgeland moduli spaces, which are not global quotients.
In Section \ref{modspacesISV} we prove that these moduli spaces are irreducible symplectic varieties and that the deformation type depends only on the multiplicity and the square of the Mukai vector. This uses, and extends, the corresponding result for Gieseker moduli spaces of Perego-Rapagnetta \cite{Perego-Rapagnetta-Irr}.
Finally in Section \ref{sect second cohomology} we show that the Mukai-Donalson morphism induced by a quasi-universal family induces an isomorphism between the second integral cohomology of the moduli space and the orthogonal complement of the Mukai vector in the Mukai lattice. Again, this extends and uses the corresponding result for Gieseker moduli spaces of Perego-Rapagnetta \cite{Perego-Rapagnetta-second-cohomology}.

\subsection*{Acknowledgements} I am grateful to the participants of the 2019-2020 \emph{Groupe de travail} ``Composantes de Kuznetsov, stabilité, espaces de modules'' for the opportunity of learning together these topics and in particular to E. Macr\`i for answering numerous questions about \cite{BLMNPS}. 
I would also like to thank E. Arbarello, B. Bakker, Ch. Lehn, and J. Alper for useful discussions around the topics of this paper, and the anonymous referee for carefully reading the paper and suggesting improvements to the exposition.

I gratefully acknowledge support from NSF CAREER grant DMS-2144483 and NSF FRG grant DMS-2052750.

\section{Notation, background, and recollection of the main results used} \label{Sect Notation}

In this section we give a quick overview of the main concepts and results that are used in this note, mostly to fix the notation and give references.

For a smooth projective variety $Z$, $D^b(Z)$ will denote the bounded derived category of coherent sheaves on $Z$. We will mostly consider the case when $Z$ is a K3 surface (usually  denoted by $S$), or a smooth cubic fourfold (usually denoted by $X$).
We will also consider the derived category of a K3 surface twisted with respect to a Brauer class, i.e. a torsion class $\alpha \in H^2(X, \mc O_X^*)$. 
In this setting there is a notion of  $\alpha$-twisted coherent sheaves and we denote by  $D^b(S,\alpha)$ the bounded derived category of $\alpha$-twisted coherent sheaves  on $S$ (see \cite[\S 1]{Huybrechts-Stellari}). 
Note that is the case for untwisted K3 surfaces, for any Brauer class the Serre functor on $D^b(S, \alpha)$ is $[2]$, the shift by $2$. This is a fundamental property which is the starting point of all the results of this area starting from \cite{Mukai84} and which will be shared by the Kuznetsov component of a cubic fourfold (to be introduced in the next section).

\subsection{The residual component of a cubic fourfold, and its Mukai lattice}

For more details on this section we refer  the reader to \cite{Macri-Stellari-survey, Kuznetsov-cubic,Kuznetsov-Calabi-Yau,Addington-Thomas,BLMNPS}.

In this section, $X$ will denote a smooth cubic fourfold. We let $D^b(X)$ be the bounded derived category of coherent sheaves on $X$. The Kuznetsov, or residual, component of $X$ is the admissible subcategory of $ D^b(X)$  defined as the left orthogonal
\[
\Ku(X):=\langle \mc O_X, \mc O_X(1), \mc O_X(2) \rangle^\perp \subset D^b(X)
\]
to the exceptional collection $\mc O_X, \mc O_X(1), \mc O_X(2)$. By  \cite{Kuznetsov-Calabi-Yau}, $\Ku(X)$ is a non-commutative K3 surface. This means that  it is an admissble subcategory of the bounded derived category of a smooth projective variety, that its Serre functor is the shift by $2$, and that its Hochschild (co)homology coincides with that of a K3 surface (see Definitions 2.21-22 and 2.31 of \cite{Macri-Stellari-survey}). 
The numerical Grothendieck group of $X$ is the quotient
\[
K_{num}(X):=K(D^b(X)) \slash \ker \chi,
\]
where $K(D^b(X))$ is the Grothendieck group of $X$ and $\ker \chi$ is the radical of the Euler pairing $\chi(E,F)=\sum (-1)^i \Hom_{D^b(X)}(E,F[i])$ (by Serre duality, the left and right radical of $\chi$ coincide). We say that $\Ku(X)$ is geometric if there is a K3 surface $S$ and an equivalence of categories $\Ku(X) \cong D^b(S)$. If this is the case, we say that $X$ has an associated K3 surface and that $S$ is the K3 surface associated to $X$ \cite{Kuznetsov-cubic,Addington-Thomas,BLMNPS}. This holds for example when $X$ is a Pfaffian cubic fourfold \cite{Kuznetsov-cubic}.

Let $K_{top}(X)$ be the topological K-theory of $X$ (see \cite{Atiyah-Hirzebruch, Addington-Thomas} as well as \cite[\S 3.4]{Macri-Stellari-survey} and references therein). Since $X$ has no odd cohomology, by \cite[\S 2.5]{Atiyah-Hirzebruch}, $K_{top}(X)$ is torsion free and it coincides with the Grothendieck group $K^0_{top}(X)$ of topological complex vector bundles on $X$. Moreover, the Mukai vector determines an injective morphism
\be
\begin{aligned} \label{MukaivecKtheory}
v: K_{top}(X) &  \longrightarrow H^*(X,\Q) \\
E &  \longmapsto v(E):=\ch(E) \sqrt{\td(X)}
\end{aligned}
\ee
which is an isomorphism when tensored by $\Q$. Define a bilinear pairing by setting
\[
(\alpha, \beta):=-\chi(\alpha, \beta):=-p_*(\alpha^\vee \otimes \beta) \in \Z
\]
where $p: X \to pt$ is the map to a point and $p_*$ is the push-forward in topological K-theory. When restricted to $K_{num}(X)$ this pairing agrees, up to a sign, with the Euler pairing $\chi$ considered above.
We set
\be \label{Ktheoryresidual}
\wt H(\Ku(X), \Z):=\{ x \, | \, (\mc O_X(i), x)=0, \,\, i=0,1,2\} \subset K_{top}(X)
\ee
By \cite[\S 2]{Addington-Thomas}, the pairing $( \cdot, \cdot )$ restricts to a symmetric bilinear pairing on $\wt H(\Ku(X),\Z)$, making this into a lattice, called the Mukai lattice of the Kuznetsov component of $X$. We define a weight $2$ Hodge structure on $\wt H(\Ku(X))=\wt H(\Ku(X),\Z) \otimes \mathbb C$ by setting
\[
\wt H^{2,0}(Ku(X))):=v^{-1}(H^{3,1}(X)),  \quad \wt H^{0,2}(\Ku(X))):=v^{-1}(H^{1,3}(X)),
\]
and $\wt H^{1,1}(\Ku(X)):=v^{-1}(\oplus H^{p,p}(X))$. Here, by abuse of notation we still denote by $v$ the induced isomorphism $K_{top}(X) \otimes \C   \to H^*(X,\C)$ as well as its restriction to $\wt H(\Ku(X), \Z) \otimes \mathbb C$.
We also set
\[
\wt H_{Hdg}(\Ku(X)):=\wt H(\Ku(X),\Z) \cap \wt H^{1,1}(\Ku(X)), \quad \wt H_{alg}(\Ku(X)):=K_{num}(\Ku(X)),
\]
where $K_{num}(\Ku(X))$ is the numerical Grothendieck group of $\Ku(X)$ (eg. see \cite[Def. 2.20]{Macri-Stellari-survey}).
By  \cite{Voisin-Some-Aspects}, the Hodge conjecture for cubic fourfolds holds, so $\wt H_{Hdg}(\Ku(X))=\wt H_{alg}(\Ku(X))$. As shown by Addington-Thomas, if $X$ has an associated K3 surface $S$, then $\wt H_{Hdg}(\Ku(X)) $ is the  Mukai lattice of $S$  \cite[Lem. 3.17]{Macri-Stellari-survey}.

These definitions can be adapted to families  $f: \mc X \to T$ of smooth cubic fourfolds. For the purpose of this note we will only need to consider the case when $T$ is a smooth quasi-projective curve, so this is what we  assume in what follows. Note, however, that these definitions can be adapted to a much more general context (see \cite[\S 5.1]{Kuznetsov-base-change} and \cite[\S 3.2 and 30.1]{BLMNPS}). We define
\[
\Ku(\mc X/T):=\langle \mc O_{\mc X}\otimes D^b(T), \mc O_{\mc X}(1)\otimes D^b(T), \mc O_{\mc X}(2)\otimes D^b(T) \rangle^\perp \subset D^b(\mc X)
\]
where for $i=0,1,2$ $\mc O_{\mc X}(i)$ denotes the corresponding relative line bundle on the family of cubic fourfolds. Here the usual semi-orthogonality condition between two $T$-linear  (i.e. closed under tensorization with objects  in $D^b(T)$) subcategories  $\mc A, \mc B \subset D^b(\mc X)$ is replaced by the condition $f_* R\Shhom(B, A)=0$ for all $A \in \mc A $ and $B \in \mc B$ (see \cite[Lemma 2.7]{Kuznetsov-base-change}). There is a notion of base change for derived categories, which we will not recall and instead refer to \cite[\S 5.1]{Kuznetsov-base-change}. With this notion of base change, for every closed point $t \in T$ the base change of $\Ku(\mc X/T)$ at $t$ is $\Ku(\mc X_t)$. 

Since topological $K$-theory is a topological invariant, the definition of Mukai lattice works in families, yielding a local system
\be \label{locsystemMukai}
\wt{\mc H}(\Ku(\mc X/T), \mathbb Z),
\ee
over $T$ whose fiber at a point $t \in T$ is $\wt H(\Ku(\mc X_t), \mathbb Z)$.
Deforming to a cubic fourfold whose Kuznetsov component is geometric shows that, as an abstract lattice, the Mukai lattice of the Kuznetsov component of a cubic fourfold is nothing but the extended K3 lattice.

A natural question to ask is for which cubic fourfolds is the Kuznetsov component geometric and, more generally, when is it equivalent to the derived category of a twisted K3 surface. The following fundamental theorem says that this is the case for countably many hypersurfaces in the moduli spaces of cubic fourfolds:

\begin{thm}[Addington-Thomas, Huybrechts, \cite{BLMNPS}] Let $X$ be a cubic fourfold. Then there exists a twisted K3 surface such that $\Ku(X) \cong D^b(S, \alpha)$ if and only if there exists a class $v \in \wt H^{1,1}(\Ku(X),\Z)$ with $v^2=0$.
\end{thm}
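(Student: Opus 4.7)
The direction $(\Rightarrow)$ is routine and I would dispose of it first. Any equivalence $\Phi\colon D^b(S,\alpha) \xrightarrow{\sim} \Ku(X)$ induces a Hodge isometry between the twisted Mukai lattice $\wt H(S,\alpha,\Z)$ and $\wt H(\Ku(X),\Z)$; since $\wt H_{Hdg}(S,\alpha)$ always contains an isotropic class---for instance the Mukai vector of an $\alpha$-twisted point sheaf, which is $(0,0,1)$ in the usual grading and hence isotropic with respect to the Mukai pairing---its image under this isometry provides the required $v$.

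For the non-trivial direction $(\Leftarrow)$, my plan is moduli-theoretic. First I would reduce to a primitive class: writing $v = m v_0$ with $v_0 \in \wt H_{Hdg}(\Ku(X),\Z)$ primitive, we still have $v_0^2 = 0$. Next I would pick a Bridgeland stability condition $\sigma$ on $\Ku(X)$---which exists by \cite{BLMS}---chosen to be $v_0$-generic, and form the moduli space $S := M_\sigma(\Ku(X), v_0)$ of $\sigma$-stable objects with Mukai vector $v_0$. By the primitive case of the main result of \cite{BLMNPS}, this $S$ is a smooth projective irreducible holomorphic symplectic variety of dimension $v_0^2 + 2 = 2$, hence a K3 surface. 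A quasi-universal family of $\sigma$-stable objects exists on $S \times X$ as an object twisted with respect to a Brauer class $\alpha \in H^2(S,\mc O_S^*)$, with $\alpha$ being precisely the obstruction to the existence of a genuine universal family.

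The candidate equivalence is then the twisted Fourier--Mukai transform
\[
\Phi_{\mc E}\colon D^b(S,\alpha) \longrightarrow \Ku(X),
\]
and the main obstacle is showing that $\Phi_{\mc E}$ is an equivalence. Full faithfulness I would attack via a Bondal--Orlov/Bridgeland-style criterion applied to the spanning family $\{\mc E_s\}_{s \in S}$: for $s \neq s'$, $\sigma$-stability with equal phase forces $\Hom(\mc E_s, \mc E_{s'}) = 0$; Serre duality with the trivial Serre functor $[2]$ of the K3 category $\Ku(X)$ gives $\Ext^2(\mc E_s, \mc E_{s'}) = \Hom(\mc E_{s'}, \mc E_s)^\vee = 0$; and $\chi(\mc E_s, \mc E_{s'}) = -(v_0,v_0) = 0$ then forces $\Ext^1 = 0$. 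The diagonal case is controlled similarly, using $\ext^1(\mc E_s,\mc E_s) = v_0^2 + 2 = 2 = \dim T_s S$. Essential surjectivity is the truly delicate point, and is the real content of the theorem as established by Addington--Thomas, Huybrechts, and \cite{BLMNPS}: my approach would be to deform $X$ through a family $\mc X \to T$ to a cubic fourfold $X_0$ whose Kuznetsov component is already known to be geometric (a Pfaffian cubic, as in \cite{Kuznetsov-cubic}), transport $v_0$ through the local system \eqref{locsystemMukai}, and exploit the fact that Bridgeland moduli spaces, their twisted universal families, and fully faithful Fourier--Mukai transforms all spread out well in such families, so that being an equivalence at $X_0$ propagates to being an equivalence at $X$.
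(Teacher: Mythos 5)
The paper itself gives no argument here: its ``proof'' is the single citation \cite[Prop.~33.1]{BLMNPS}, so you are reconstructing the proof of the cited result. Your overall strategy is the right one and is the one used in \cite{BLMNPS}: the forward direction via the induced Hodge isometry and the isotropic vector $(0,0,1)$ of a point sheaf is fine, and the converse via the moduli space $S=M_\sigma(\Ku(X),v_0)$ of dimension $v_0^2+2=2$, the Brauer-twisted quasi-universal family, and the full-faithfulness argument through $\sigma$-stability, Serre duality with Serre functor $[2]$, and $\chi(\mc E_s,\mc E_{s'})=-(v_0,v_0)=0$ is exactly the intended argument.

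The gap is in your treatment of essential surjectivity. The claim that ``being an equivalence at $X_0$ propagates to being an equivalence at $X$'' along a family is not a standard fact and is not justified: in the Addington--Thomas approach the hard content is precisely the formal deformation of the Fourier--Mukai \emph{kernel} away from the Pfaffian point (controlled by an obstruction class in $HT^2$), together with Huybrechts' specialization argument to cover the whole Noether--Lefschetz divisor rather than a dense subset; none of this comes for free, and your logic runs in the opposite direction (you already have the kernel $\mc E$ on $X$ and would need to degenerate it to the Pfaffian locus while preserving the moduli interpretation). Moreover the detour is unnecessary. Since $D^b(S,\alpha)$ is smooth and proper, the image $\mc A=\Phi_{\mc E}(D^b(S,\alpha))\subset\Ku(X)$ of your fully faithful functor is admissible, so $\Ku(X)=\langle\mc A^\perp,\mc A\rangle$; because both $\mc A$ and $\Ku(X)$ have Serre functor $[2]$, the complement is \emph{completely} orthogonal, i.e.\ $\Ku(X)=\mc A\oplus\mc A^\perp$. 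A nontrivial such decomposition would split $HH^0(\Ku(X))$ as a product of two nonzero rings, contradicting Kuznetsov's computation $HH^0(\Ku(X))=\C$ (connectedness of the K3 category). Hence $\mc A^\perp=0$ and $\Phi_{\mc E}$ is essentially surjective, with no deformation of $X$ required. Replacing your last paragraph by this indecomposability argument closes the gap and matches the proof of \cite[Prop.~33.1]{BLMNPS}.
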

\begin{proof}
This is \cite[Prop. 33.1]{BLMNPS}.
\end{proof}

\subsection{Bridgeland stability conditions}

Next we give a very quick introduction to Bridgeland stability conditions, following \cite[4.1]{Macri-Stellari-survey} and mostly to fix notation. We refer the reader to loc. cit, as well as to \cite{Bayer, BLMS, Bridgeland-K3} for more thorough treatments.

As usual, let $\mc D$ be  the derived category of a (twisted) K3 surface or the Kuznetsov component of a cubic fourfold (most of what follows works more generally for a non-commutative smooth projective variety, but we won't pursue this generality here). Let $\Lambda$ be a free abelian group of finite rank and let $v: K(\mc D) \to \Lambda$ be a surjective homomorphism.

\begin{defin}A Bridgeland stability on $\mc D$ with respect to $\Lambda$ (and $v$) is a pair $\sigma=(\mc A, Z)$, where $\mc A \subset \mc D$ is the heart of a bounded $t$-structure and where $Z: \Lambda \to \mathbb C$ is a group homomorphism (called the central charge) such that:
\begin{enumerate}
\item $Z$ is a stability function, i.e. for every $0 \neq A \in\mc A$, $\Im Z(A) \ge 0$ and if $\Im Z(A)=0$, then $\Re Z(A) <0$.
\item When considering $\sigma$-semistability in $\mc A$, i.e. semistability with respect to the slope 
\[
\mu(\cdot):=\frac{-\Re Z(\cdot) }{ \Im Z(\cdot)}
\]
then every non-zero object in $\mc A$ has a Harder-Narasimhan filtration. This means that for every $0 \neq A \in \mc A$ there exists a finite sequence of injections in $\mc A$
\[
0=A_0 \to A_1 \dots A_{n-1} \to A_n=A
\]
with the property that the quotients $B_i=A_i / A_{i-1}$ are semistable with respect to $\mu(\cdot)$ and have decreasing slopes $\mu(B_1)>\mu(B_2)> \cdots >\mu(B_n)$.
\item $\sigma$ satisfies the support property, i.e. there exists a quadratic form $Q$ on $\Lambda \otimes \mathbb R$ such that $Q_{|\ker Z}$ is negative definite and $Q$ is non-negative on all $\sigma$-semistable objects.
\end{enumerate}
\end{defin}

We will mostly omit $v$ from the notation and, for example, write $Z(E)$ instead of $Z(v(E))$. 

The $\sigma$-semistable objects of $\mc D$ are declared to be the shifts of the $\sigma$-semistable objects of $\mc A$. The phase of a $\sigma$-semistable object $F=E[n]$, where $E \in \mc A$ is $\sigma$-semistable, is defined to be $\phi+n$, where the phase $\phi$ of $E \in \mc A$  is the unique $\phi \in [0,1)$ such that $Z(E) \in \mathbb R_{>0} \exp(\pi i \phi)$.

Let $\Stab_{\Lambda}(\mc D)$ be the set of Bridgeland stability conditions on $\mc D$ with respect to $\Lambda$ (and $v$).
A fundamental theorem of Bridgeland \cite{Bridgeland-Annals} endows $\Stab_{\Lambda}(\mc D) $ with a structure of complex manifold and proves that  the forgetful morphism $\Stab_{\Lambda}(\mc D)  \to \Hom(\Lambda, \mathbb C)$, $\sigma=(\mc A, Z) \mapsto Z$ is a local homeomorphism. In particular, the complex dimension of  $\Stab_{\Lambda}(\mc D) $ is equal to $\rk \Lambda$.

A stability condition is called numerical if $v$ factors via $K_{num}(\mc D)$, the numerical Grothendieck group of $\mc D$, and  full numerical if its numerical and the lattice $\Lambda $ is equal to $K_{num}(\mc D)$ (for the categories we are considering, $K_{num}(\mc D)$ is finitely generated). 
We denote by $\Stab(\mc D)$ the space of full numerical stability conditions.

Fixing a class $v \in \Lambda$ gives a wall and chamber structure on $\Stab_\Lambda(\mc D)$. More precisely, there is a locally finite set of $\mathbb R$-codimension one submanifolds called walls which have the following properties (see \cite[12.13]{BLMNPS}). Let $W$ be a finite intersection of walls. A chamber in $W$ is defined to be a connected component of the complement in $W$ of the union of the intersection of $W$ with other walls. As $\sigma$ varies in each chamber, the set of $\sigma$-semistable (and of $\sigma$-stable) objects of class $v$ is constant. Moreover, up to the natural action of $\wt{GL_2^+(\mathbb R)}$ on $\Stab_\Lambda(\mc D)$ (see \cite[Lemma 8.2]{Bridgeland-Annals}), every wall can be locally described by an equation of the form $\Im Z(u)=0$, for some $0 \neq u $.  Recall that $GL_2^+(\mathbb R)$ denotes the group of $2\times2$ matrices with positive determinant and that $\wt{GL_2^+(\mathbb R)}$ denotes its universal cover. 

Finally, in each chamber there is a $Z$ defined over $\mathbb Q[i]$ (i.e. such that $Z(\Lambda) \subset \mathbb Q[i]$).
A stability condition is called $v$-generic if it does not lie on any wall.

In certain cases, for example when considering families of stability conditions, it is natural to consider stability conditions with respect to different lattices. We won't delve into this, but simply notice that a numerical stability condition naturally induces a full numerical one.

In general, showing the existence of stability conditions for a given category is a very hard problem. For the derived category of a (twisted) K3 surface, respectively for the Kuznetov component of a smooth cubic fourfolds, the non emptiness of the space of full numerical stability conditions has been established in \cite{Bridgeland-K3, Huybrechts-Stellari-Stability}, respectively in \cite{BLMS}, by explicitly constructing Bridgeland stability conditions. These stability conditions all lie in the same connected component, which we denote by 
\[
\Stab^\dagger(\mc D).
\]
A fundamental breakthrough in this area was achieved in \cite{BLMNPS}, where the  notion of family of Bridgeland stability conditions is introduced. We won't attempt to give this subtle definition, and instead refer the reader to Definitions 20.5 and 21.15 of  loc. cit.

\subsection{Moduli stacks and (good) moduli spaces}

In this subsection we briefly recall the definition of moduli stack of objects in $\mc D$, where $\mc D$ is the derived category of a (twisted) K3 surface or the Kuznetsov component of a cubic fourfold. We follow \cite[\S 5.3]{Macri-Stellari-survey} and refer the reader to \cite{Lieblich, BLMNPS, Alper-HalpernLeistner-Heinloth} for details and a more thorough treatment of the topic. 

We let $X$ be a K3 surface or a cubic fourfold, so that $\mc D \subset \mc D^b(X)$ is an admissible subcategory. Following \cite[Def. 9.1]{BLMNPS} (see also \S 5.3 of \cite{Macri-Stellari-survey}), we consider the  functor from schemes to groupoids defined by 
\[
\begin{aligned}
\mathfrak{M}: Sch^{op} &\longrightarrow Gpds \\
B &\longmapsto \{E \in \mc D_{B-perf} \, | \, \Ext^i(E_b, E_b)=0  \,\, \forall i <0, \, \text{ for all geometric point } b \in B\}
\end{aligned}
\]
where $\mc D_{B-perf}$ is the intersection in $D_{qc}(X \times B)$ of the category  $D_{B-perf} (X × B)$ of $B$-perfect complexes on $X \times B$ with the smallest triangulated subcategory of $D_{qc}(X \times B)$ which contains $\mc D \boxtimes D^b(B)$ and which is closed under arbitrary direct sums (see Definition 5.9 of \cite{Macri-Stellari-survey} and following paragraph).
The second condition on the vanishing of the Ext-groups in negative degree defines what are called universally gluable complexes (see \cite[Def. 2.1.8, Prop]{Lieblich} or also \cite[Def 8.5]{BLMNPS}).
By \cite[Thm. 4.2.1]{Lieblich} and \cite[Prop. 9.2]{BLMNPS}, the above defines an algebraic stack $\mathfrak M$ locally of finite type over $\mathbb C$, with separated diagonal (see also \cite[Thm 5.10]{Macri-Stellari-survey}).

It is important to note that definition of the functor above can be adapted to work  also in families, so that one can consider relative moduli spaces.
Even though in Sections \ref{sect ample}, \ref{modspacesISV} and \ref{sect second cohomology}  we will indeed use relative moduli spaces, introducing the correct definitions would take us too far. We therefore instead refer the reader to \cite[\S 8-9]{BLMNPS}.

In practice, one wants to consider moduli spaces of Bridgeland semimstable objects in $\mc D$ of a given numerical class. One would like to know that  the substack of $\mathfrak M$ parametrizing semistable objects of a given class is a stack of finite type over $\mathbb C$ and that it admits a good moduli space. As first observed by Toda  \cite{Toda}, the first condition holds if the semistable locus defines an open substack and if the set of semistable objects in a given class forms a bounded family. For K3 surfaces (more precisely, for those stability conditions in the main component) these two conditions were proved in \cite{Toda}. In \cite{BLMNPS}, these two properties are required to hold as part of the definition of  families of stability conditions, and are proved to hold for the stability conditions on the Kuznetsov component of a cubic fourfold that are constructed in \cite{BLMS}.

We now introduce the moduli stack of semistable objects. Fix a class $v \in \Lambda$ and a stability condition $\sigma=(\mc A, Z) \in \Stab_\Lambda(\mc D)$. We also fix the phase of the semistable objects we will consider, that is we fix a  $\phi \in \mathbb R_{>0}$ such that $Z(v) \in R_{>0} \exp (i\pi \phi)$. We let
\be
\mathfrak M_v (\mc D,\sigma, \phi)
\ee
be the substack of $\mathfrak M$ parametrizing semistable objects of class $v$ and phase $\phi$. See also Definition 21.11 of \cite{BLMNPS}.

\begin{rem} \label{pmv}  Clearly,  for every $n \in \Z$, there is a natural equivalence
\[
\mathfrak M_v (\mc D,\sigma, \phi) \to \mathfrak M_{(-1)^nv}  (\mc D,\sigma, \phi+n),
\]
given by the shift by $n$, so we will often drop the phase from the notation and write $\mathfrak M_v (\mc D,\sigma)$ instead. This means, in particular, that when studying properties of these moduli stacks and their (good) moduli spaces, one can always pass from $v$ to $-v$. If the category $\mc D$ is clear from the context, we will also simply write $\mathfrak M_v (\sigma)$.
\end{rem}

Using foundational results of Leiblich \cite{Lieblich} and Toda \cite{Toda}, one gets algebraicity of these stacks 

\begin{thm}(\cite{Toda,BLMNPS}) The stack
$\mathfrak M_v (\mc D,\sigma, \phi)$ is bounded and it is an open substack of $ \mathfrak M$. As a consequence, $\mathfrak M_v (\mc D,\sigma, \phi)$ is an algebraic stack of finite type over $\mathbb C$ with affine diagonal.
\end{thm}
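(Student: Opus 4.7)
The plan is to establish boundedness and openness of the semistable locus, and then to deduce the algebraic-geometric structure as a formal consequence of these two properties combined with what is already known about the ambient stack $\mathfrak M$.

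I would handle the two categories of interest separately. If $\mc D = D^b(S,\alpha)$ is the (twisted) derived category of a K3 surface and $\sigma \in \Stab^\dagger(\mc D)$, boundedness of the set of $\sigma$-semistable objects of fixed class $v$ and openness of the semistable locus in families are due to Toda \cite{Toda}. If $\mc D = \Ku(X)$ for a cubic fourfold $X$, the statement is built into the definition of a family of Bridgeland stability conditions given in \cite[Def. 21.15]{BLMNPS}: boundedness and openness are two of the axioms imposed there, and the explicit stability conditions of \cite{BLMS} are shown to define such a family in \cite{BLMNPS}. In both cases the key input is the support property (condition (3) in the definition of $\sigma$), which restricts the numerical classes of potential destabilizing subobjects and quotients to a finite set.

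Once boundedness and openness are in hand, the remaining assertions are essentially formal. Since $\mathfrak M$ is an algebraic stack locally of finite type over $\mathbb C$ with separated diagonal by \cite[Thm. 4.2.1]{Lieblich} and \cite[Prop. 9.2]{BLMNPS}, any open substack inherits an algebraic stack structure that is locally of finite type. Boundedness upgrades this to finite type, since the substack then admits a smooth atlas of finite type. Affineness of the diagonal follows from the fact that for any two $\sigma$-semistable objects $E,F$ of the same phase the vector space $\Hom(E,F)$ is finite dimensional over $\mathbb C$, so the fppf sheaf of isomorphisms is an open subscheme of an affine space over the base; universal gluability of objects in $\mathfrak M$ lets this pointwise statement spread out in families.

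The main obstacle is openness of $\sigma$-semistability in families. This is a genuinely subtle point because semistability is not a numerical condition, and the heart of the bounded t-structure underlying $\sigma$ need not glue naively in families of complexes. The strategy used in \cite{Toda} and \cite{BLMNPS} is to leverage the support property together with boundedness to reduce the question to finitely many closed conditions, each detecting the existence of a destabilizing factor whose Mukai vector lies in a fixed finite set; openness then follows from standard upper-semicontinuity of $\Ext$-groups along $B$-perfect families. Making this reduction work uniformly across the base of a family of stability conditions is precisely what forces the delicate definition given in \cite[\S 20--21]{BLMNPS}, and is the technical heart of the statement.
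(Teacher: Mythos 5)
Your proposal follows essentially the same route as the paper: boundedness and openness come from Toda in the (twisted) K3 case and are built into the definition of a family of stability conditions in \cite{BLMNPS} (verified for the \cite{BLMS} stability conditions) in the Kuznetsov case, and the algebraicity and finite-type statements are then formal consequences of $\mathfrak M$ being algebraic, locally of finite type, with separated diagonal. The paper simply packages the last step as a citation of \cite[Lem.\ 3.4]{Toda} and of the Stacks Project for the diagonal.

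One small but genuine imprecision: your argument for affineness of the diagonal does not quite work as stated. Realizing the isomorphism sheaf as an \emph{open} subscheme of the affine $\Hom$-scheme over the base only shows that the diagonal is quasi-affine, not affine. The standard repair (and what the reference the paper cites actually does) is to represent an isomorphism by the pair $(\phi,\psi)$ of mutually inverse maps, cutting out a \emph{closed} subscheme of $\mathcal{H}om(E,F)\times_B \mathcal{H}om(F,E)$ by the equations $\psi\circ\phi=\mathrm{id}$ and $\phi\circ\psi=\mathrm{id}$; this closed subscheme of an affine scheme maps isomorphically onto the Isom-sheaf, which is therefore affine over the base. With that adjustment your argument is complete and matches the paper's.
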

\begin{proof} The case of K3 surfaces is one of the main result of \cite{Toda} (see Theorems 3.20, 4.5, Lemma 4.7 and Prop. 4.11). In the case of the Kuznetsov component of a cubic fourfold, this is proved in \cite{BLMNPS}: openness and boundedness are requested to hold as part of the definition of a family of stability conditions ( see Definition 20.5 and 21.15 of loc. cit), and as shown in Proposition 30.5 of loc.cit, the stability conditions constructed in \cite{BLMS} on the Kuznetsov component of a cubic fourfold satisfy these properties.  The fact that these two conditions guarantee the second statement is proved in \cite[Lem. 3.4]{Toda}.
The fact that the diagonal is affine follows from \cite[\href{https://stacks.math.columbia.edu/tag/0DPW}{Tag 0DPW}]{StacksProject}. 
\end{proof}

Next we recall the notion of good moduli space, which was introduced by Alper in \cite{Alper-gms} as a generalization of the notions of good quotient in GIT and of coarse moduli space.

\begin{defin} Let $\mathfrak N \to T$ be an algebraic stack over an algebraic space. A good moduli space for $\mathfrak N$ is an algebraic space $N$ over $T$ with a morphism $\pi: \mathfrak N \to N$ such that:
\begin{enumerate}
\item The push-forward functor $\pi_*$ on quasi-coherent sheaves is exact.
\item The natural map $\mc O_{N} \to \pi_* \mc O_{\mathfrak N}$ is an isomorphism.
\end{enumerate}
\end{defin}

If a good moduli space  for $\mathfrak N$ exists, we say that $\mathfrak N$ admits a good moduli space. 
Building on \cite[\S 7]{Alper-HalpernLeistner-Heinloth}, in \cite{BLMNPS} the existence of  proper good moduli spaces is established

\begin{thm} \label{thm gms}
Let $\mc D$ be  the derived category of a  (twisted) K3 surface or the Kuznetsov component of a cubic fourfold, let $v \in K(\mc D)$ be a Mukai vector, let $\sigma \in \Stab^\dagger(\mc D)$ a stability condition and $\phi \in \mathbb R$ a phase (compatible with $v$ and $\sigma$). 
\begin{enumerate}
\item The closed points of the moduli stack  $\mathfrak M_v (\mc D,\sigma, \phi)=\mathfrak M_v (\mc D,\sigma)$ are in bijection with the $\sigma$-polystable objects of class $v$. 
\item The moduli stack $\mathfrak M_v (\mc D,\sigma)$ admits a good moduli space
\[
\pi: \mathfrak M_v (\mc D,\sigma) \to  M_v (\mc D,\sigma, \phi)=M_v (\mc D,\sigma)= M_v(\sigma)
\]
which is proper over $\mathbb C$. 
\item The stable locus $M_v (\mc D,\sigma^s)$ of the moduli space admits a quasi-universal family (see Definition A.4 of \cite{Mukai-Tata}).
\end{enumerate}
\end{thm}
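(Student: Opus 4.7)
The plan is to assemble this theorem from three already-developed pieces: the theory of Jordan--Hölder filtrations in Bridgeland stability for part (1), the Alper--Halpern-Leistner--Heinloth existence criterion for good moduli spaces applied to the boundedness and openness proved in the previous theorem for part (2), and the standard gerbe argument of Mukai for part (3). The stack $\mathfrak{M}_v(\mc D,\sigma,\phi)$ is already known to be algebraic, of finite type over $\mathbb{C}$, with affine diagonal, so we can invoke the general machinery.

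For part (1), the first step is to recall that for a fixed phase $\phi$, the subcategory of semistable objects of phase $\phi$ is an abelian subcategory of $\mc D$ which is of finite length (existence of Jordan--Hölder filtrations in Bridgeland stability). Every semistable object $E$ of class $v$ and phase $\phi$ thus admits a JH filtration with stable factors $E_i$ of the same phase; the associated graded $\mathrm{gr}(E)=\bigoplus E_i$ is polystable, and two semistable objects are declared $S$-equivalent iff their associated graded agree. By the general theory of good moduli spaces (Alper), closed points of $\mathfrak{M}_v(\mc D,\sigma)$ map bijectively to $S$-equivalence classes of $\sigma$-semistable objects of class $v$; since polystable objects are the unique closed-orbit representatives of their $S$-equivalence class, this yields the bijection claimed.

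For part (2), the plan is to invoke the criterion of Alper--Halpern-Leistner--Heinloth: an algebraic stack of finite type with affine diagonal over $\mathbb{C}$ admits a separated good moduli space if and only if it is $\Theta$-reductive and $S$-complete. Both conditions have direct translations in terms of Bridgeland stability: $\Theta$-reductivity asks that a semistable family over a punctured disk with a filtration at the generic point by semistable sub-objects of the same phase extends canonically to the full disk, and $S$-completeness asks for uniqueness of limits up to $S$-equivalence over the spectrum of a DVR. For K3 surfaces, these are the content of Toda's work; for the Kuznetsov component of a cubic fourfold, they are verified in Sections 21 and 30 of \cite{BLMNPS} using the axioms of a family of stability conditions (notably, the support property and openness of semistability in families). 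Properness of $M_v(\mc D,\sigma)$ is then established via the valuative criterion, which reduces to the same kind of extension statement for one-parameter families of semistable objects.

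For part (3), on the stable locus the automorphism group of each object is $\mathbb{G}_m$ (scalar endomorphisms), so the restriction $\mathfrak{M}_v^s(\mc D,\sigma)\to M_v^s(\mc D,\sigma)$ is a $\mathbb{G}_m$-gerbe. A universal family exists precisely when this gerbe is trivial, which in general it is not, but a quasi-universal family in the sense of Mukai always exists for a $\mathbb{G}_m$-gerbe over a scheme: one takes any locally free twisted universal object and averages over a sufficient power to obtain an honest family on $M_v^s$ of the required form. The main obstacle throughout is really the second part, namely verifying $\Theta$-reductivity and $S$-completeness for Bridgeland moduli; this is where the subtle notion of a family of stability conditions from \cite{BLMNPS} is essential, and where one cannot proceed by naive analogy with the Gieseker case.
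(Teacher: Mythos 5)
Your proposal follows essentially the same route as the paper: part (1) via Alper's identification of closed points with polystable (closed-orbit) representatives of $S$-equivalence classes, part (2) via the $\Theta$-reductivity/$S$-completeness criterion of Alper--Halpern-Leistner--Heinloth as verified in \cite{BLMNPS} (Thm.\ 21.24) and \cite{Toda}, and part (3) via the $\mathbb{G}_m$-gerbe structure on the stable locus together with Mukai's quasi-universal-family construction. The only place you are lighter than the paper is in part (3), where the existence of étale-local universal families and the gluing are justified there by Lieblich's representability of the sheafified rigidification and the BBD gluing lemma rather than by an unspecified ``averaging,'' but the underlying idea is the same.
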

\begin{proof}
The first statement follows Lemmas 3.24, 3.25, Corollary 3.13 and Example 7.26 of \cite{Alper-HalpernLeistner-Heinloth}. The second statement is \cite[Thm 21.24]{BLMNPS}. For the third statement, we argue as follows. Using the Glueing Lemma \cite[Thm. 3.2.4]{BBD} and the same argument as in the proof of Theorem A.4  of \cite{Mukai-Tata}, it is enough to show that there is a universal family on an \'etale cover  of the moduli space. 
The existence of a universal family on an \'etale cover can be seen in the following way: by  \cite[Cor. 4.3.3]{Lieblich} the \'etale sheafifation of the rigidification (cf.  \cite[\S]{Abramovich-Corti-Vistoli} or also \cite[\S 6.2.8]{Alper-Notes-Stacks}) of the moduli functor is represented by an algebraic space, which is nothing but  the good moduli space $ M^s_\sigma(\mc D, v)$, and the natural morphism $\mathfrak M_v (\mc D,\sigma) \to  M_v (\mc D,\sigma) $ is a $\mathbb G_m$-gerbe. 
By definition of \'etale sheafification, there is an \'etale cover $U \to M_v(\mc D, \sigma)$ which lifts to a morphism $ U \to \mathfrak M_v(\mc D, \sigma^s)$.
\end{proof}

Another crucial ingredient for studying these moduli spaces are the numerical divisor class $\ell_\sigma$ on the moduli stack and the moduli space, which were first introduced by Bayer-Macr\`i  \cite[Thm 4.1]{Bayer-Macri-Proj} as a generalization of the determinantal line bundles of Le Potier.

\begin{thm} \label{thm ellsigma}
For every $\sigma=(\mc A, Z) \in \Stab(\mc D)$ there exist a numerical $\mathbb R$-divisor class $[\ell_{\sigma,\frak M_v(\sigma)}] \in N^1(\frak M_v(\sigma))$ which descends to a strictly nef $\mathbb R$-divisor class $[\ell_{\sigma_0, M_v(\sigma)}]$ on the good moduli space $M_v(\sigma)$. If $Z$ is defined over $\Q[i]$, then $[\ell_{\sigma_0, M_v(\sigma)}]$ is the class of a $\Q$-Cartier divisor on $M_v(\sigma)$.
\end{thm}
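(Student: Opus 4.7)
The plan is to adapt the construction of Bayer-Macr\`i \cite{Bayer-Macri-Proj} to the stack-theoretic Kuznetsov component setting of \cite{BLMNPS}. Starting from $\sigma=(\mc A,Z) \in \Stab(\mc D)$, I would first encode the central charge as a real-valued linear form on $\Lambda$ by setting
\[
w_\sigma(u) \, := \, \Im\!\left( -\frac{Z(u)}{Z(v)} \right),
\]
which vanishes on $v$, hence factors through $v^\perp_{\R}$, and takes rational values whenever $Z$ is defined over $\Q[i]$. Next, one constructs a Donaldson-type morphism $\theta : v^\perp_\R \to N^1(\mathfrak{M}_v(\sigma))_\R$ whose value on a test family $\mc E$ over $B \to \mathfrak{M}_v(\sigma)$ and a class $u \in v^\perp$ is given by the first Chern class of the determinant of $(p_B)_*(\mc E \otimes p_X^* u^\vee)$; this is well-defined on $v^\perp$ because the Euler characteristic of $\mc E_b \otimes u^\vee$ vanishes on fibers. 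Setting $\ell_{\sigma,\mathfrak{M}_v(\sigma)} := \theta(w_\sigma)$ yields the desired $\R$-class on the stack, which is $\Q$-Cartier when $Z$ takes values in $\Q[i]$.

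To descend $\ell_{\sigma,\mathfrak{M}_v(\sigma)}$ to $M_v(\sigma)$, one checks the hypotheses of Alper's descent criterion (\cite{Alper-gms}, cf.\ \cite[\S 7]{Alper-HalpernLeistner-Heinloth}): the class must have trivial weight under every one-parameter subgroup of the stabilizer of every closed point. For a $\sigma$-polystable object $E = \bigoplus_i E_i^{\oplus n_i}$ of class $v$, such a weight is a linear combination of the pairings $\langle w_\sigma, v(E_i) \rangle$ whose coefficients are constrained so that the total is a scalar multiple of $w_\sigma(v)$; since $w_\sigma(v) = 0$, it vanishes, producing the descended class $[\ell_{\sigma,M_v(\sigma)}] \in N^1(M_v(\sigma))_\R$.

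The main obstacle, and the heart of the theorem, is strict nefness. For a proper integral curve $C \subset M_v(\sigma)$, I would use Theorem~\ref{thm gms}(3) to lift $C$, after an \'etale base change, to a family $\wt{\mc E}_C$ of $\sigma$-semistable objects of class $v$ over a finite cover of $C$. A direct computation then identifies $\ell_\sigma \cdot C$ with a positive multiple of $\Im\!\bigl( -Z(\Phi_{\wt{\mc E}_C}(\mc O_C))/Z(v) \bigr)$, where $\Phi_{\wt{\mc E}_C}$ denotes the Fourier-Mukai transform with kernel $\wt{\mc E}_C$. The positivity lemma of Bayer-Macr\`i asserts that this quantity is non-negative, with equality precisely when the fibers $\wt{\mc E}_c$ are pairwise $S$-equivalent, equivalently when $C$ is contracted by the good moduli space morphism. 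The proof decomposes $\Phi_{\wt{\mc E}_C}(\mc O_C)$ via its Harder-Narasimhan filtration and uses the support property of $\sigma$ to control the phases of the factors. Transporting this argument from the classical K3 setting to families of stability conditions on $\Ku(X)$, which requires openness of semistability in families and the existence of relative Harder-Narasimhan filtrations as developed in \cite{BLMNPS}, is the technically delicate part of the argument.
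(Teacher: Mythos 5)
The paper's own proof of Theorem \ref{thm ellsigma} is a one-line citation to \cite[Thm.~21.25]{BLMNPS}, so your text is a reconstruction of the argument behind the cited result rather than an alternative to anything proved here. Your outline does match that argument: the linear form $w_\sigma(u)=\Im(-Z(u)/Z(v))$, the Donaldson-type morphism $\theta$ into $N^1$, the identification $\ell_\sigma=\theta(w_\sigma)$, the rationality statement when $Z$ is defined over $\Q[i]$, and the reduction of strict nefness to the Positivity Lemma (non-negativity of $\Im(-Z(\Phi_{\mc E}(\mc O_C))/Z(v))$, with equality exactly when the fibers are $S$-equivalent) are precisely the ingredients of \cite[\S 3--4]{Bayer-Macri-Proj} and \cite[\S 21]{BLMNPS}.

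However, your justification of the descent step is wrong as stated. At a closed point $[E]$ with $E=\oplus_i E_i^{\oplus n_i}$ polystable, the stabilizer is $\prod_i GL(n_i)$, and the weight of a one-parameter subgroup on the fiber of $\theta(w_\sigma)$ is a linear combination $\sum_i c_i\,(w_\sigma,v(E_i))$ in which the integers $c_i$ can be chosen independently of one another; it is \emph{not} constrained to be a multiple of $w_\sigma(v)=\sum_i n_i\,(w_\sigma,v(E_i))$ except for the diagonal $\mathbb G_m$. So the single relation $w_\sigma(v)=0$ does not give descent. What actually saves the argument is that each stable summand $E_i$ is a Jordan--H\"older factor of a $\sigma$-semistable object of class $v$ and phase $\phi$, hence is itself $\sigma$-stable of the \emph{same} phase $\phi$; therefore $Z(v(E_i))/Z(v)\in\R_{>0}$ and $(w_\sigma,v(E_i))=\Im(-Z(v(E_i))/Z(v))=0$ term by term, which kills every weight individually. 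A second, smaller imprecision: Theorem \ref{thm gms}(3) only provides a quasi-universal family over the \emph{stable} locus, so it cannot be used to lift an arbitrary proper curve $C\subset M_v(\sigma)$, which may lie entirely in the strictly semistable locus; one instead produces a proper curve in an atlas of $\mathfrak M_v(\sigma)$ mapping finitely onto $C$, using that the good moduli space morphism is surjective and universally closed, and evaluates the Positivity Lemma on the resulting family.
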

\begin{proof}
This is \cite[Thm. 21.25]{BLMNPS}, building on \cite[\S 3-4]{Bayer-Macri-Proj}.
\end{proof}

Note that the reference  \cite[Thm. 21.25]{BLMNPS} deals also with the case of relative moduli spaces. The fact that in certain circumstances this class can be defined for relative moduli spaces will be important in Section \ref{sect ample}.

\subsection{Results on K3 surfaces}
Let $(S, \alpha)$ be a twisted K3 surface and let $\mc D=D^b(S, \alpha)$.  Consider a  $B$-field  $B \in H^2(S,\mathbb Q)$ lifting $\alpha$. Recall that a $B$-field is defined as follows: since $H^3(S, \mathbb Z)=0$, there is a lift $B \in H^2(S, \mc O_S)$ of 
$\alpha$ under the map $H^2(S, \mc O_S) \to H^2(S, \mc O_S^*)$ induced by the exponential exact sequence. Since by definition $\alpha$ is torsion, $B \in H^2(S, \Q)$. We set  $H^*_{alg}(S,B, \Z) := (\exp(B) H^*_{alg}(X,\mathbb Q)) \cap H^*_{alg}(S, \Z)\subset H^*_{alg}(S, \Z)$ (see  \cite[\S 1]{Huybrechts-Stellari}). Note that the twisted Chern characters $\alpha$-twisted coherent sheaves lie in $H^*_{alg}(S,B, \Z)$ \cite[Prop. 1.2 and Rem 1.3]{Huybrechts-Stellari}.
We fix $v_0 \in H^*_{alg}(S, B, \Z)$ a primitive Mukai vector  with $v_0^2 \ge 2$ and for an integer $m \ge 2$ we set $v=mv_0$.

We collect in the following theorem a series of results of Bayer-Macr\`i on moduli spaces of Bridgeland semistable objects in $\mc D$ with Mukai vector $v$.

\begin{thm} \label{Projnongeneric} Let $v_0 \in H^*_{alg}(S, B, \Z)$ be a Mukai vector with $v_0^2 \ge 2$, let $m  \ge 2 $ be an integer, set $v=mv_0$, and let $\sigma \in \Stab^\dagger(\mc D)$ be a stability condition that is $v$-generic.

\begin{enumerate}
\item Then $M_v(\sigma)$ is a non-empty, normal irreducible symplectic variety of dimension $v^2+2$. The line bundle $\ell_\sigma$ is ample;
\item If $\tau \in  \Stab^\dagger(\mc D)$ is also $v$-generic, then $M_v(\sigma)$ and $M_v(\tau)$ are birational.
\item If $(m,v_0^2) \neq (2,2)$, so $v=mv_0$ is not of OG10 type, then $M_v(\sigma)$ is factorial with terminal singularities.
\end{enumerate}

\end{thm}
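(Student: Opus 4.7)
The strategy is to reduce every assertion to the corresponding statement for Gieseker moduli of (twisted) sheaves on a K3 surface, where the results are classical. The main tool is the wall-and-chamber structure on $\Stab^\dagger(\mc D)$: I would first invoke the Bayer--Macr\`i machinery to connect any $v$-generic $\sigma$, via a finite chain of wall-crossings and the action of $\widetilde{GL_2^+(\R)}$ together with derived autoequivalences, to a stability condition $\sigma_0$ lying in a ``Gieseker chamber'' on some twisted K3 surface $(S',\alpha')$ derived-equivalent to $(S,\alpha)$. For such $\sigma_0$, the $\sigma_0$-semistable objects of class $v$ are precisely the $\mu_H$-Gieseker semistable $\alpha'$-twisted sheaves with Mukai vector $v$, so $M_v(\sigma_0)$ is identified with a classical Gieseker moduli space. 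Since each wall-crossing step induces a birational map between the corresponding moduli spaces (by the standard Bayer--Macr\`i analysis of how semistable objects change across a single wall), this simultaneously proves part (2).

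For part (1), non-emptiness and the dimension formula $v^2+2$ follow from Mukai's Riemann--Roch calculation together with the existence of stable objects whenever $v^2 \ge -2$ in the primitive case, combined with strict semistability for multiples. Normality and the irreducible symplectic variety structure are transported from the Gieseker side via Perego--Rapagnetta \cite{Perego-Rapagnetta-Irr} through the birational identification. Ampleness of $\ell_\sigma$ at $v$-generic $\sigma$ comes by combining the strict nefness from Theorem \ref{thm ellsigma} with the Bayer--Macr\`i positivity lemma: a curve contracted by $\ell_\sigma$ would force a one-parameter family of distinct $\sigma$-polystable objects becoming $S$-equivalent, which is ruled out in the interior of a chamber; hence $\ell_\sigma$ has strictly positive intersection with every curve and is ample by Kleiman, once projectivity is inherited from the Gieseker side.

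Part (3) is the most delicate. In the Gieseker setting, Kaledin--Lehn--Sorger \cite{Kaledin-Lehn-Sorger} establish factoriality and terminality by analyzing the GIT quiver model of the formal neighbourhood of a polystable sheaf $E=\bigoplus E_i^{\oplus m_i}$ and computing the reflexive class group and the codimension of the non-stable locus directly in terms of $m$ and $v_0^2$. To transport this to the Bridgeland setting, I would appeal to the local structure theorem for good moduli spaces from \cite{Alper-HalpernLeistner-Heinloth} (invoked implicitly in Theorem \ref{thm gms}), which yields an \'etale local presentation of $\mathfrak M_v(\sigma)$ around a polystable $E$ as the quotient of the Kuranishi family by $\Aut(E)$; the formal deformation space is controlled by $\Ext^1(E,E)$, and the Mukai pairing on $v=mv_0$ identifies the resulting symplectic quiver model with the one appearing in Kaledin--Lehn--Sorger, so their arithmetic condition $(m,v_0^2)\ne (2,2)$ yields factoriality and terminality verbatim.

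The main obstacle is precisely this last step: because $\mathfrak M_v(\sigma)$ is not a global GIT quotient, one must verify that the \'etale-local Kuranishi-plus-quiver description is genuinely \emph{isomorphic} to the one used in the Gieseker setting, compatibly with the birational identifications coming from wall-crossing. The factoriality statement in particular requires a careful bookkeeping of the reflexive Picard group of the singular local model, and terminality uses the precise codimension of the strictly semistable locus cut out by the partitions of $m$ --- this is exactly where the OG10 exclusion $(m,v_0^2)=(2,2)$ enters and cannot be avoided.
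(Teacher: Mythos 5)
The core gap is in your treatment of part (2). You assert that ``each wall-crossing step induces a birational map between the corresponding moduli spaces (by the standard Bayer--Macr\`i analysis)'', but for a \emph{non-primitive} Mukai vector this is exactly the point that fails to be standard. The paper reduces, as you do, to two adjacent chambers separated by a wall $\mc W$ with a generic $\sigma_0 \in \mc W$. If $\mc W$ is not totally semistable there are $\sigma_0$-stable objects of class $v$ and the maps $\phi^\pm: M_v(\sigma^\pm) \to M_v(\sigma_0)$ are birational, as you implicitly use. But if $\mc W$ is a \emph{totally semistable} wall, $\dim M_v(\sigma_0)$ can jump and $\phi^\pm$ need not be birational; one must argue separately. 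The paper handles this following Meachan--Zhang: letting $\mc H$ be the rank-two lattice attached to $\mc W$, the non-primitivity of $v$ rules out the existence of an isotropic class in $\mc H$ pairing to $1$ with $v$, so there is a spherical class $s \in \mc H$ with $s\cdot v<0$; one then produces a minimal class $v_{min}$ and autoequivalences $\Phi^\pm$ carrying $\sigma_0$-stable objects of class $v_{min}$ to $\sigma^\pm$-semistable objects of class $v$, with generic such objects pairwise non-$S$-equivalent, which shows $\phi^\pm$ are birational onto their images and hence that $M_v(\sigma^+)$ and $M_v(\sigma^-)$ are birational. Without this step your chain of wall-crossings to a Gieseker chamber does not go through, and since parts (1) and (3) in your scheme are \emph{transported} along that chain, the gap propagates to the whole argument.

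Two further remarks. First, in the paper parts (1) and (3) are not re-proved at all: they are direct citations to Bayer--Macr\`i (non-emptiness, irreducibility, ampleness of $\ell_\sigma$, normality with symplectic singularities, and factoriality/terminality for $(m,v_0^2)\neq(2,2)$), so your reconstructions there are unnecessary for this statement --- the \'etale-local quiver analysis you sketch for (3) is what the paper actually carries out later, for the Kuznetsov component, where no citation is available. Second, your ampleness argument is not sound as written: strict positivity of $\ell_\sigma$ on every curve (strict nefness) does not imply ampleness, even on a projective variety, and Kleiman's criterion requires positivity on the closure of the cone of curves; the cited Bayer--Macr\`i theorem establishes ampleness by a different route.
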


\begin{proof}
(1) Non-emptiness is \cite[Thm 2.15 ]{Bayer-Macri-MMP}; the irreducibility and the ampleness of $\ell_\sigma$ is \cite[Thm 1.3]{Bayer-Macri-Proj}; the fact that the moduli spaces are normal with symplectic singularities is \cite[Thm 3.10]{Bayer-Macri-MMP}.

(3) This is \cite[Thm 1.3 (a)]{Bayer-Macri-Proj}; Note that OG10-type moduli spaces admit a divisorial symplectic resolution, so the singularities are canonical, but not terminal.

(2) Requires a proof, which we give building on \cite{Bayer-Macri-MMP, Meachan-Zhang}. Since $ \Stab^\dagger(S)$ is connected and the walls are $\mathbb R$-codimension $1$ hypersurfaces which are locally finite, it is enough to consider the case when $\sigma^+:=\sigma$ and $\sigma^-:=\tau$ belong to adjacent chambers, i.e., two chambers that are separated by a wall $\mc W$.  Let $\sigma_0$ be a general stability condition on $\mc W$ (i.e.  $\sigma_0$ does not belong to any other wall).  If $\mc W$ is not a totally semistable wall (cf. \cite[Def. 2.20]{Bayer-Macri-MMP}) , then by definition there are $\sigma_0$-stable objects of class $v$. By openness of semistability, the natural morphism of moduli spaces $\phi^\pm: M_v(\sigma^\pm) \to M_v(\sigma_0)$, induced by the open immersion of stacks $\Mf_v(\sigma^\pm) \subset \Mf_v(\sigma_0)$ thus admits a birational inverse.

If $\mc W$ is a totally semistable wall, then $\dim M_v(\sigma_0)$ may jump, so $\phi^\pm$ may not be birational. 
In \cite[Prop. 5.2]{Meachan-Zhang}, the authors use the techniques developed by \cite{Bayer-Macri-MMP} to prove that the morphisms $\phi^\pm$ are still birational onto their image. For the readers sake, we recall  their argument. 
Let $\mc H$ be the rank $2$ lattice associated to $\mc W$ as in \cite[Prop. 5.1]{Bayer-Macri-MMP}. 
Since  $v$ is not primitive, the case where $\mc H$ contains an isotropic class pairing $1$ with $v$ is excluded. Hence by \cite[Thm 5.1]{Meachan-Zhang} there exists a spherical class $s \in \mc H$ such that $s \cdot v <0$ (cf.\cite[Thm 5.7]{Bayer-Macri-MMP}).
 In this case,  by \cite[Prop. 6.8]{Bayer-Macri-MMP} (see also \cite[Lem 6.3]{Meachan-Zhang}) there exists a Mukai vector $v_{min} \in \mc H$ (called the minimal class) and two autoequivalences $\Phi^\pm$ of $\mc D$ which have the following properties:
 
 \begin{enumerate}
\item $M_{v_{min}}(\sigma_0)^s \neq \emptyset$;
\item $\Phi^\pm(v_{min})=v$;
\item for every $\sigma_0$-stable object $E$ of class $v_{min}$, $\Phi^\pm(E)$ is $\sigma^\pm$-semistable of class $v$. 
\end{enumerate}
 
 Moreover, by the proof of \cite[Cor. 7.3]{Bayer-Macri-MMP} (see also \cite[Lem 6.3]{Meachan-Zhang}), the general $\sigma_0$-stable object $E'$ of class $v_{min}$ is such that $\Phi^\pm(E)$ and $\Phi^\pm(E')$ are not $S$-equivalent with respect to $\sigma_0$.
 
This means that, setting $\eta=\sigma^\pm$ and $ \sigma_0$,  the morphisms $M_{v_{min}}(\eta) \to  M_{v}(\eta)$ induced by $\Phi^\pm$ are birational onto their image and hence that $M_v(\sigma^\pm) \to M_{v}(\sigma_0)$ is also birational onto its image.

 In particular, $M_{v}(\sigma^{+})$ and $M_{v}(\sigma^{-})$ are birational.
\end{proof}

\section{Local structure and applications} \label{sect local}

Let $\mc D $ be $Ku(X)$ or $\mc D^b(S, \alpha)$.
In this section we study the local structure of the moduli space $M_v(\sigma)$ at a strictly polystable object $F=\oplus F_i$.  While in the beginning of the section we don't make any assumptions on $v$ and $\sigma$, for the factoriality result (Theorem \ref{locally fact})  we need to assume that $v=mv_0$, $v_0^2\ge 2$ is a non-primitive vector which is not of OG$10$-type and that $\sigma$ is $v$-generic. Otherwise, the result is not necessarily true. The key tools are Luna's \'etale slice theorem for stacks \cite{AHR},  the use of the Ext-quiver \cite{Kaledin-Lehn-Sorger, AS, ASII}, and ideas that go back to \cite{Drezet-Narasimhan, Drezet, Kaledin-Lehn-Sorger}.

We start by recalling the \'etale slice theorem for the stack $ \mathfrak M_v(\sigma)$, together with some refinements \cite{AHR, Alper-HalpernLeistner-Heinloth}.
For the basic definitions and results on stacks we refer the reader to \cite{Alper-Notes-Stacks, StacksProject}.
For an algebraic stack $\mc X$, we denote by $I_{\mc X}$ its inertia stack \cite[Def. 3.2.9]{Alper-Notes-Stacks}.

\begin{thm}(Luna's \'etale slice for $\mathfrak M_v(\sigma)$ \cite{AHR}) \label{luna} \label{propalper}  
Let $ F$ be a $\sigma$-polystable object of class $v$ defining a closed point $[F]  \in \mathfrak M_v(\sigma)$ and let $G$ be its automorphism group, which is linearly reductive. Then
\begin{enumerate}
\item  there exists an affine $G$-scheme $U$, a $G$-fixed point $u_0 \in U$, and a pointed \'etale morphism of stacks
\[
\epsilon: ([U \slash G],u_0) \to (\mathfrak M_v(\sigma), [F])
\]
which induces an isomorphism of stabilizers at $u_0$;
\item up to restricting $U$, we can assume that $\epsilon$ is affine, that it induces an isomorphism of stabilizers (in the sense that  the natural map $I_{[U \slash G]} \to [U \slash G] \times_{\mathfrak M_v(\sigma)} I_{\mathfrak M_v(\sigma)}$ is an isomorphism), and finally, that $\epsilon$ sends closed points to closed points;
\item the morphism $\epsilon$ induces a Cartesian diagram
\[
\xymatrix{
[U \slash G] \ar[d] \ar[r]^\epsilon &  \mathfrak M_v(\sigma) \ar[d]\\
U \sslash G\ar[r] &   M_v(\sigma) 
}
\]
where $\pi:  \mathfrak M_v(\sigma)  \to   M_v(\sigma) $ is the good moduli space.
\end{enumerate}
\end{thm}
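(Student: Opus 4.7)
The plan is to invoke the local structure theorem of Alper--Hall--Rydh \cite{AHR} applied to the stack $\mathfrak{M}_v(\sigma)$ at the closed point $[F]$, and then upgrade the output via the refinements available for stacks admitting a good moduli space. The hypotheses are satisfied in our setting: $\mathfrak{M}_v(\sigma)$ is an algebraic stack of finite type over $\mathbb{C}$ with affine diagonal, the point $[F]$ is closed by Theorem \ref{thm gms}(1), and since $F=\bigoplus_i F_i^{\oplus n_i}$ with the $F_i$ pairwise non-isomorphic $\sigma$-stable objects of the same phase, the automorphism group $G=\Aut(F)=\prod_i GL_{n_i}$ is reductive, hence linearly reductive over $\mathbb{C}$.

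For part (1), the AHR theorem directly produces an affine $G$-scheme $U$, a $G$-fixed point $u_0\in U$, and a pointed \'etale morphism $\epsilon\colon([U/G],u_0)\to(\mathfrak{M}_v(\sigma),[F])$ inducing an isomorphism of stabilizers at $u_0$. For part (2), each of the three extra properties can be achieved by replacing $U$ with a suitable $G$-invariant open subscheme of $u_0$: affineness of $\epsilon$ is arranged using that $\mathfrak{M}_v(\sigma)$ has affine diagonal and $\epsilon$ is representable; the global isomorphism $I_{[U/G]} \to [U/G]\times_{\mathfrak{M}_v(\sigma)}I_{\mathfrak{M}_v(\sigma)}$ follows from an openness argument combined with the linear reductivity of stabilizers in a neighborhood of $u_0$; and the preservation of closed points is ensured by shrinking $U$ to the saturation of a small neighborhood of $u_0$ with respect to the GIT quotient map $U\to U\sslash G$.

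Part (3) follows from the general formalism of good moduli spaces: $U\sslash G=\Spec\mathcal{O}_U^G$ is the good moduli space of $[U/G]$, and a representable \'etale morphism of stacks which induces isomorphisms of stabilizers and sends closed points to closed points necessarily induces an \'etale morphism on good moduli spaces with respect to which the original morphism forms a Cartesian square. This is precisely the statement established in \cite{Alper-HalpernLeistner-Heinloth} (extending the foundational work \cite{Alper-gms}), and it is the mechanism by which \'etale local descriptions of $\mathfrak{M}_v(\sigma)$ transfer to \'etale local descriptions of $M_v(\sigma)$.

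The proof itself is essentially a bookkeeping exercise since the technology of \cite{AHR, Alper-HalpernLeistner-Heinloth} is tailored exactly for this situation. The main difficulty, and the reason a careful statement is needed, is that the three refinements in (2) must be arranged simultaneously on a single neighborhood of $u_0$, and the Cartesian diagram in (3) relies on all of them together with the explicit identification of $U\sslash G$ as the good moduli space of $[U/G]$; in particular, the passage from an \'etale-local description at a single closed point to a picture compatible with $\pi$ hinges on the stabilizer-preserving form of the slice theorem rather than its plain version.
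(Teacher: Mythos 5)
Your proposal is correct and follows essentially the same route as the paper: part (1) from the Alper--Hall--Rydh local structure theorem, the refinements in (2) by shrinking $U$ using the good-moduli-space machinery of \cite{Alper-HalpernLeistner-Heinloth}, and the Cartesian diagram in (3) from the corresponding descent statement in \cite{AHR}. The only place the paper is more precise is in part (2), where the passage from stabilizer-preservation at the single point $u_0$ to an isomorphism $I_{[U/G]} \to [U/G]\times_{\mathfrak M_v(\sigma)} I_{\mathfrak M_v(\sigma)}$ on a neighborhood (together with preservation of closed points) is not a bare openness argument but is deduced from the fact that $\mathfrak M_v(\sigma)$ has unpunctured inertia, which in turn follows from its being $\Theta$-reductive and $S$-complete by \cite[Thm A, Thm 5.3, Prop. 4.3, Lem. 3.28]{Alper-HalpernLeistner-Heinloth}.
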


\begin{proof}
The existence of an affine $G$-scheme $U$ with the required properties is the content of \cite[Thm 1.2]{AHR}. By Theorem \ref{thm gms}  the moduli stack $ \mathfrak M_v(\sigma)$ is an algebraic stack of finite type with { affine diagonal} which admits a good moduli space.  The fact that $\epsilon$ is affine is proved in \cite[Prop. 3.2]{AHR}. 
By  \cite[Thm A]{Alper-HalpernLeistner-Heinloth} it is $\Theta$-reductive and $S$-complete \cite[Thm A]{Alper-HalpernLeistner-Heinloth}. In particular, it has unpunctured inertia \cite[Thm 5.3]{Alper-HalpernLeistner-Heinloth}. Thus we can use \cite[Prop. 4.3]{Alper-HalpernLeistner-Heinloth}  and  \cite[Lem. 3.28]{Alper-HalpernLeistner-Heinloth} to deduce that the natural map  $I_{[U \slash G]} \to [U \slash G] \times_{\mathfrak M_v(\sigma)} I_{\mathfrak M_v(\sigma)}$ is an isomorphism  and that $\epsilon$ sends closed points to closed points.
The fact that the diagram above is cartesian is the content of \cite[Theorem 4.12]{AHR}.
\end{proof}

While this theorem applies to more general stacks (see \cite{AHR}), in the case of moduli space of objects on K3 categories, the Ext-quiver associated to a polystable object can be especially useful for the local study of the moduli space at a point $[F]$. It was introduced implicitly in \cite{Kaledin-Lehn-Sorger} and explicitly in \cite{AS}.

\begin{defin} \cite[Prop. 6.1]{AS}
Let $\sigma \in \Stab^\dagger(\mc D)$ be a Bridgeland stability condition and let $F$ be a $\sigma$-polystable object. We let $F=\oplus_{i=1}^k F_i^{m_i}$ be its decomposition in distinct stable factors.  The Ext-quiver $Q=(E,A)$ associated to $F$ is the  quiver $Q$ with vertex set $E=\{1, \dots, k\}$ and with  $\ext^1(F_i,F_j)$ arrows from $i$ to $j$ if $i < j$, and $\ext^1(F_i,F_i)/2$ loops at each vertex. The double quiver $\bar Q$ is the quiver obtained from $Q$ by adding to $Q$ a new arrow with the reversed orientation for every arrow of $Q$.
\end{defin}

Note that the double quiver does not depend on the choice of the ordering of the vertices. Set $\mathbf m=(m_1, \dots, m_k)$,
so that 
\[
\Aut(F)=GL(\mathbf m)=\prod GL(m_i),  \quad  \Ext^1(F,F)=Rep(\bar Q,\mathbf m) , \quad \Ext^2(F,F)=\frak{Lie}(\Aut(F))= \mathfrak{gl}(\mathbf m).
\]
Recall that for a quiver $Q$ and dimension vector $\mathbf m=(m_1, \dots, m_k)$,
\[
Rep(\bar Q,\mathbf m) = \oplus_{a \in A} \Hom (V_{s(a)}, V_{t(a)}),
\]
where for every arrow $a \in A$ we let $s(a)$ (resp. $t(a)) \in E$ denote the source (resp. the target)  of $a$ and where for every $i \in E$, $V_i$ is a vector space of dimension $m_i$.
Under these identifications, the moment map 
\[
\mu:\Ext^1(F,F) \to \Ext^2(F,F)=Lie(\Aut(F))
\]
 is the Yoneda product, which is also the quadratic part of the formal Kuranishi map \cite{Kaledin-Lehn-Sorger, Lieblich}. The quiver variety  is defined as the symplectic reduction
 \[
 N_Q(\mathbf m):=\mu^{-1}(0) \sslash G.
 \]
 The object $F$ is said to satisfy the formality property if the dg algebra $R\Hom(F,F)$ is formal, i.e., if it is quasi-isomorphic (as dg algebra) to its cohomology algebra \cite[\S 3.1]{Kaledin-Lehn}.
 If $F$ satisfies the formality property, then  the formal completion of $\mu^{-1}(0)$ in $0$ is isomorphic to the zero locus of the formal Kuranishi map, and hence it is a formal miniversal space for the deformation functor of $F$. As a consequence, the moduli space is locally (analytically or in the \'etale topology) isomorphic to $ N_Q(\mathbf m)$. For more on this see \cite{ASII} and references therein. While the formality property is known to hold in many situations (see \cite{Budur-Zhang, Bandiera-Manetti-Meazzini,ASII}), we have chosen to use arguments that do not require it (though the formality property would simplify some of the arguments in the proof of factoriality).

From now on we assume that the Mukai vector $v=mv_0$, $m \ge 2$, $v_0^2 \ge 2$ is not of OG$10$-type and that the stability condition $\sigma$ is $v$-generic. The following Proposition reformulates   \cite[Prop. 3.8-3.11]{Kaledin-Lehn-Sorger} to the case of more general K3 categories. Recall that a variety is called factorial if every Weil divisor is Cartier. This holds if and only if the variety is (Zariski) locally factorial, meaning that all of its local rings are unique factorization domains.

\begin{prop}  \label{Ufac} Let $F$ be a $\sigma$-polystable object of Mukai vector $v$ and $U$ be as in Theorem \ref{luna}. Then $U$ is a normal local complete intersection regular in codimension $\ge 3$. Moreover, up to passing to a $G$-invariant  open saturated neighborhood of $u_0$, we can assume that $U$ is factorial.
\end{prop}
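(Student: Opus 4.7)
Building on the étale slice theorem (Theorem~\ref{luna}) and the Ext-quiver, I would reduce all three claims to an analysis of the Kuranishi model for $U$ at $u_0$. Writing $F=\bigoplus_{i=1}^k F_i^{m_i}$ with distinct $\sigma$-stable $F_i$, deformation theory identifies $T_{u_0}U=\Ext^1(F,F)=\mathrm{Rep}(\bar Q,\mathbf m)$ and obstructions in $\Ext^2(F,F)=\bigoplus_i\mathfrak{gl}(m_i)$. The Calabi--Yau trace on $\mc D$ yields a distinguished surjection $\mathrm{tr}\colon\Ext^2(F,F)\to\mathbb C$, through whose kernel $\Ext^2(F,F)_0$ the Kuranishi map $\kappa\colon\Ext^1(F,F)\to\Ext^2(F,F)_0$ factors; its quadratic part is the moment map $\mu$ for the $G=GL(\mathbf m)$-action, which for Lie-theoretic reasons also lands in $\Ext^2(F,F)_0$. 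The formal completion of $U$ at $u_0$ is the zero locus of $\kappa$.

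Second, I would conclude the lci property and the $S_2$ part of normality. From the Cartesian square of Theorem~\ref{luna} and the fact that $\sigma$-stable objects have generic stabilizer $\mathbb G_m$, one has $\dim U=\dim\mathfrak M_v(\sigma)+\dim G=(v^2+1)+\dim G$. The Mukai identity $\dim\Ext^1(F,F)-2\dim\Hom(F,F)=v^2$ yields $\dim\Ext^1(F,F)=v^2+2\dim G$, hence $\dim U=\dim\Ext^1(F,F)-(\dim G-1)=\dim\Ext^1(F,F)-\dim\Ext^2(F,F)_0$. Therefore the $\dim\Ext^2(F,F)_0$ components of $\kappa$ form a regular sequence at $u_0$, so $\mc O_{U,u_0}$ is a complete intersection; by openness of the lci property this persists on a Zariski neighborhood, and being lci gives Cohen--Macaulay, so Serre's $S_2$ holds automatically.

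Third, regularity in codim $\ge 3$ (equivalently, singular locus of codim $\ge 4$) comes from a stratification of $\mathfrak M_v(\sigma)$ by the isomorphism type of the polystable object and the combinatorics of the attached Ext-quiver. Adapting the codimension estimates of \cite[\S 3]{Kaledin-Lehn-Sorger} --- which are essentially formal in the Ext-quiver data and do not depend on a global GIT presentation --- one checks that under $v_0^2\ge 2$, $m\ge 2$, and $(m,v_0^2)\ne(2,2)$, every locally closed stratum in $U$ whose points have strictly larger stabilizer has codimension at least $4$. The excluded OG$10$ case is precisely the one admitting a codim-$2$ such stratum, which is why its moduli space instead admits a symplectic resolution. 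Normality of $U$ then follows from Serre's $R_1+S_2$ criterion.

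Finally, factoriality follows from Grothendieck's parafactoriality theorem for complete intersections (SGA 2, Exp.~XI, Cor.~3.14): lci together with singular locus of codim $\ge 4$ at $u_0$ forces $\mc O_{U,u_0}$ to be a UFD, and after restricting to a $G$-saturated open neighborhood where the same codimension estimate holds at every closed point, $U$ itself becomes factorial. The main obstacle is the codimension analysis in the third step: in the Bridgeland setting one cannot invoke the global GIT machinery of \cite{Kaledin-Lehn-Sorger}, and must instead run the Ext-quiver combinatorics on an arbitrary polystable object of class $v$, carefully tracking which quiver configurations are excluded by the non-OG$10$ hypothesis.
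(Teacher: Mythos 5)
Your architecture matches the paper's, which simply defers to \cite[Prop.~3.8 and 3.11]{Kaledin-Lehn-Sorger}: Kuranishi model with obstructions in the trace-free part of $\Ext^2(F,F)$, lci $\Rightarrow$ Cohen--Macaulay $\Rightarrow S_2$, a codimension estimate for the singular locus from the Ext-quiver data under the non-OG$10$ hypothesis, normality by $R_1+S_2$, and factoriality from Grothendieck's parafactoriality theorem (SGA~2, Exp.~XI) for local complete intersections regular in codimension $3$, spread out over a $G$-saturated neighborhood. That last step, and the reduction of everything to the formal Kuranishi space, are fine.

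The gap is in your dimension count. You derive $\dim U=\dim\mathfrak M_v(\sigma)+\dim G=(v^2+1)+\dim G$ and then declare the $\dim\Ext^2(F,F)_0$ components of $\kappa$ a regular sequence. But at a strictly polystable point $[F]$ the local dimension of $\mathfrak M_v(\sigma)$ is not known to be $v^2+1$ a priori: the stack could have excess-dimensional components supported entirely in the strictly semistable locus, and ruling these out is precisely what the paper does \emph{later}, in the proof of Theorem~\ref{thmproj}, using the normality and irreducibility consequences of this very proposition. Since $\kappa^{-1}(0)$ is cut out by $\dim\Ext^2(F,F)_0=\dim G-1$ equations, the lower bound $\dim_{u_0}U\ge \ext^1(F,F)-(\dim G-1)$ is automatic; it is the matching \emph{upper} bound that carries the content. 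The correct route, which is what \cite[Prop.~3.8]{Kaledin-Lehn-Sorger} actually does, is to note that the tangent cone of $\kappa^{-1}(0)$ at the origin is contained in $\mu^{-1}(0)$, the zero fibre of the quadratic part, and to invoke Crawley--Boevey's theorems on moment maps for quivers (applied to $\bar Q$ and $\mathbf m$) to conclude that $\mu^{-1}(0)\subset Rep(\bar Q,\mathbf m)$ is an irreducible, normal complete intersection of the expected dimension with singular locus of the required codimension. This yields $\dim_{u_0}\kappa^{-1}(0)\le\dim\mu^{-1}(0)$, hence the regular sequence, and simultaneously supplies the $R_3$ property by comparison with the cone. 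In your write-up the quiver-variety input appears only in step three as a codimension estimate for strata; in fact it is also what legitimizes the dimension count in step two, and without it that step is circular.
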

\begin{proof}
The proof of the first statement is the same as that of Proposition 3.8 of \cite{Kaledin-Lehn-Sorger}. The proof of the second statement follows, using the first statement and Grothendieck's factoriality theorem for complete intersections which are regular in codimension $3$, as in the proof of Proposition 3.11 of \cite{Kaledin-Lehn-Sorger}.
\end{proof}

\begin{cor} \label{cor irr}
The moduli space $M_v$ is normal. In particular, its connected components are irreducible.
\end{cor}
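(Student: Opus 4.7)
The plan is to combine Luna's \'etale slice theorem (Theorem \ref{luna}) with the local description of the slice provided by Proposition \ref{Ufac}, and then use the fact that normality is an \'etale-local property. Since both statements of the corollary follow from normality (the second being the standard fact that a locally Noetherian normal scheme is a disjoint union of its irreducible components), the entire content of the proof is to verify that $M_v(\sigma)$ is normal at every closed point.

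First I would fix a closed point $[F] \in M_v(\sigma)$, corresponding to a $\sigma$-polystable object $F$ of Mukai vector $v$. By Theorem \ref{luna} there exists an affine $G$-scheme $U$ with a $G$-fixed point $u_0$, an \'etale morphism $\epsilon\colon [U/G] \to \mathfrak{M}_v(\sigma)$ sending $u_0$ to $[F]$, and a Cartesian square relating the good moduli space morphisms. In particular the induced map $U \sslash G \to M_v(\sigma)$ is \'etale at the image of $u_0$ (as the base change of an \'etale morphism of stacks along a good moduli space). By Proposition \ref{Ufac}, up to shrinking $U$ to a $G$-saturated open neighborhood of $u_0$, we may assume $U$ is a normal (in fact factorial) scheme.

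Next I would invoke the standard fact that if $G$ is a linearly reductive group acting on a normal affine scheme $U$, then the GIT quotient $U \sslash G = \Spec \mathcal{O}(U)^G$ is normal: indeed $\mathcal{O}(U)^G \hookrightarrow \mathcal{O}(U)$ is an extension of integrally closed domains (more precisely, $\mathcal{O}(U)^G$ is integrally closed in its field of fractions because it is the intersection of $\mathcal{O}(U)$ with that field), and the ring of invariants of a normal ring under a reductive group action is again normal. Hence $U \sslash G$ is normal at the image of $u_0$. Since $U \sslash G \to M_v(\sigma)$ is \'etale at that point, and normality descends along \'etale morphisms (normality is equivalent to Serre's conditions $R_1$ and $S_2$, both of which are preserved and detected \'etale-locally), the moduli space $M_v(\sigma)$ is normal in a neighborhood of $[F]$. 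As $[F]$ was arbitrary and every point of $M_v(\sigma)$ specializes to a closed point (the map from the stack is surjective onto closed points by Theorem \ref{thm gms}), we conclude that $M_v(\sigma)$ is normal.

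Finally, a Noetherian normal scheme is the disjoint union of its irreducible components, so each connected component of $M_v(\sigma)$ is irreducible. No step should present a real obstacle here: the only point to be careful about is that the slice $U$ provided by Theorem \ref{luna} is only guaranteed to be normal after possibly shrinking to a $G$-saturated open, which is precisely what Proposition \ref{Ufac} furnishes, and this is enough because the diagram in Theorem \ref{luna} remains Cartesian under such restriction.
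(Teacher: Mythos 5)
Your argument is correct and is precisely the (unwritten) derivation the paper intends: the corollary is stated immediately after Proposition \ref{Ufac}, and the route normal slice $U$ $\Rightarrow$ normal invariant ring $\mathcal{O}(U)^G$ $\Rightarrow$ normality of $M_v(\sigma)$ via the \'etale map $U \sslash G \to M_v(\sigma)$ from Theorem \ref{luna} is exactly what is being used. The only point worth flagging is that the \'etaleness of $U \sslash G \to M_v(\sigma)$ is not a formal consequence of base change alone but relies on the refinements in Theorem \ref{luna}(2)--(3) (the morphism being strongly \'etale, i.e.\ stabilizer-preserving and sending closed points to closed points), which the paper supplies, so your proof goes through as written.
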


We now recall the natural stratification of $M_v(\sigma)$ by polystable type. Let $F=\oplus_{i=1}^k F_i^{\alpha_i}$ be a $\sigma$-polystable object of class $v$. We assume that  the $F_i$ are $\sigma$-stable of class $v_i:=m_i v_0$ and that $F_i \ncong F_j$ for $ i \neq j$. Note that $m=\sum \alpha_i m_i$. The sequence of integers
\[
\underline \alpha:=(\alpha_1, m_1, \dots,\alpha_k, m_k  ),
\]
which we will refer to as the \emph{polystable type of $F$},
determines the class and the multiplicities of the decomposition of $F$ in direct sum of stable objects.
For every  polystable type $\underline \alpha= (\alpha_1, m_1, \dots,\alpha_k, m_k  )$,  let $P_{\underline \alpha} \subset M_v$ be the set of points corresponding to polystable objects of type $\underline \alpha$ and consider the finite morphism
\be  \label{stratificationmorph}
\begin{aligned}
 \Phi_{\underline \alpha}: \prod M_{v_i} & \longrightarrow M_v.\\
 \{F_{i}\} &\longmapsto  \oplus_{i=1}^k F_{i}^{\oplus \alpha_i}
\end{aligned}
\ee
Then $P_{\underline \alpha} $ is the image under  $\Phi_{\underline \alpha}$ of the open subset of $\prod M_{ v_i}$ parametrizing distinct stable objects and hence it is a locally closed subset of $M_v(\sigma)$. For example, $P_{(1,m)} $ is the stable locus and $P_{(m,1)} = M_{v_0}(\sigma)$ parametrizes objects of the form $F_0^m$, with $F_0 \in M_{v_0}(\sigma)$.
As $\underline \alpha$ varies among the possible polystable types of objects of class $\sigma$, the $P_{\underline \alpha}$ define a stratification of $M_v(\sigma)$.

\begin{rem} \label{deepeststratum}
Note that since for every $m$, there is a closed embedding $M_{v_0} \to M_{mv_0}$, $F \mapsto F^{\oplus m}$, it follows that $P_{(m,1)} = M_{v_0}(\sigma)$ is the deepest stratum in the sense that it is contained in the closure of every other stratum.
\end{rem}


\section{Ampleness of $\ell_\sigma$} \label{sect ample}


In this section we prove that for a generic stability condition the moduli spaces of Bridgeland stable objects in the Kuznestov component of a cubic fourfold, with non-primitive Mukai vector  that is not of OG$10$-type,  
are irreducible of dimension $v^2+2$ and that the numerical class $\ell_\sigma$  of Theorem \ref{thm ellsigma} is ample. This is done in Theorem \ref{thmproj}. 

Let $X$ be a cubic fourfold. Let $\mc D$ be its Kuznetsov component. Fix a primitive Mukai vector $v_0 \in K_{num}(\mc D)$ with $v_0^2 \ge 2$, an integer $m \ge 2$ and set $v=mv_0$. Assume that $v$ is not of OG$10$-type.

\begin{thm} \label{thmproj} Let $X$ and $v$ be as above. For any $v$-generic $\sigma \in \Stab^\dagger (X)$, let $M_v( \sigma)=M_v(\mc D, \sigma)$ be the good moduli space of the moduli stack $\mathfrak M_v(\mc D, \sigma)$, which exists by Theorem \ref{thm gms}. Then $M_v( \sigma)$ is an irreducible projective symplectic variety of dimension $v^2+2$ with terminal singularities and the numerical class $\ell_\sigma$ is ample.
\end{thm}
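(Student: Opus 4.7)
The plan is to deform $(X, \sigma)$ within the moduli of cubic fourfolds to one whose Kuznetsov component is the (twisted) derived category of a K3 surface, and then transport the ampleness and related statements from that case---which is covered by Theorem \ref{Projnongeneric}---back to $X$. Concretely, I would pick a smooth connected quasi-projective curve $T$, a family $f : \mc X \to T$ of smooth cubic fourfolds, and marked points $0, t_\infty \in T$ such that $\Ku(\mc X_0) \cong D^b(S, \alpha)$ for some twisted K3 surface $(S, \alpha)$ and $\mc X_{t_\infty} \cong X$, using the density of cubics with an associated twisted K3 (countably many divisors in the moduli of cubics). Via the local system \eqref{locsystemMukai} the vector $v$ extends to a flat section $\underline{v}$ on $T$, and after possibly shrinking $T$ the family-of-stability-conditions formalism of \cite{BLMNPS} (\S 20--21 and \S 30) yields an extension of $\sigma$ to a family $\underline{\sigma}$ whose fiber over each closed point of $T$ is $\underline{v}_t$-generic.

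With this in place, the relative good moduli space $p : \mc M := \mc M_{\underline{v}}(\underline{\sigma}) \to T$ and the relative class $\ell_{\underline{\sigma}} \in N^1(\mc M/T)$ are produced by the relative versions of Theorems \ref{thm gms} and \ref{thm ellsigma}, with $p$ proper and $\ell_{\underline{\sigma}}|_{\mc M_t}$ strictly nef on each fiber. On the special fiber, Theorem \ref{Projnongeneric}(1) gives that $\mc M_0 = M_v(\sigma_0)$ is irreducible of dimension $v^2 + 2$ with $\ell_{\sigma_0}$ ample. The Ext-quiver analysis of Section \ref{sect local} shows every closed fiber has dimension at least $v^2 + 2$, which combined with upper semicontinuity of fiber dimension forces $p$ to be equidimensional with fibers of dimension $v^2 + 2$. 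Ampleness being an open condition in proper families of algebraic spaces---together with the fiberwise strict nefness, which rules out nearby fibers acquiring a nonzero curve of degree zero against $\ell$---I would deduce that $\ell_{\sigma_t}$ is ample for every $t \in T$; in particular $\ell_\sigma = \ell_{\sigma_{t_\infty}}$ is ample on $M_v(\sigma)$.

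Projectivity of $M_v(\sigma)$ then follows from the ampleness of $\ell_\sigma$ together with the properness of the good moduli space in Theorem \ref{thm gms}(2); normality, and hence the asserted irreducibility, is Corollary \ref{cor irr}; the symplectic structure on the smooth locus comes from the K3-category nature of $\Ku(X)$ (Kuznetsov--Markushevich) and extends to a resolution via the local complete intersection description of Proposition \ref{Ufac} and standard Kuranishi-theoretic arguments; terminality of the singularities is Theorem \ref{locally fact}. The main technical obstacle I anticipate is ensuring that $\underline{\sigma}$ can be chosen $\underline{v}$-generic simultaneously on \emph{every} closed fiber: a priori the wall structure could shift so that a generic stability condition on one fiber lies on a wall in another. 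Handling this requires careful use of the locally finite wall-and-chamber structure in families established in \cite{BLMNPS} \S 21, combined with the locally constant topological type of $\underline{v}$, possibly after replacing $T$ by a smaller Zariski open subset or \'etale cover; once this is arranged, the transport of ampleness is essentially formal.
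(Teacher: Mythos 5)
Your overall strategy --- deform along a curve in the Hassett divisor to a fiber whose Kuznetsov component is a (twisted) K3 category, form the relative good moduli space with the relative class $\ell_{\famsigma}$, and transport the conclusion back --- is exactly the paper's strategy. But there are two genuine gaps in how you carry out the transport.

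The serious one is the propagation of ampleness. Openness of ampleness in a proper family gives you ampleness of $\ell_{\sigma_t}$ on a Zariski-open neighborhood of the K3 point $0\in T$; it says nothing about the fiber over $t_\infty$, which is the fiber you actually care about and which may well lie outside that neighborhood. Your attempt to bridge this with fiberwise strict nefness does not work: strictly nef does not imply ample (Mumford's examples), and ``ruling out a curve of degree zero'' is precisely what strict nefness already gives you on \emph{every} fiber via Theorem \ref{thm ellsigma}, so it adds nothing. The paper closes this gap with a genuinely different mechanism: it first establishes that every fiber is a symplectic variety with terminal singularities (via Flenner's extension theorem and the codimension-$\ge 4$ bound of Corollary \ref{cordimsing}), deduces by Kawamata that the \emph{total space} $M_v(\mc D,\famsigma)$ has canonical singularities, observes that $\ell_{\famsigma}$ is relatively big because it is relatively ample over the nonempty open locus of geometric fibers, and then applies the base-point-free theorem (nef $+$ big $+$ canonical singularities $\Rightarrow$ semiample) together with strict nefness to conclude $\ell_{\famsigma}$ is ample over all of $C$. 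Without some such global argument your proof does not reach $t_\infty$.

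The second gap is irreducibility. Corollary \ref{cor irr} only says the \emph{connected components} of $M_v(\sigma)$ are irreducible; it does not say there is one of them. Your dimension count via the Ext-quiver does not exclude extra components supported entirely in the strictly semistable locus of the fiber over $t_\infty$. The paper handles this by an induction on $m$: the strata $P_{\ua}$ with $\ua\neq(1,m)$ are irreducible by induction, all contain the deepest stratum $P_{(m,1)}=M_{v_0}(\sigma)$ in their closure (Remark \ref{deepeststratum}), hence the singular locus is connected and must lie in the unique component $M'$ dominating $C$ (which contains the stable locus of every fiber by generic smoothness of the stable locus over $C$). You need this, or an equivalent connectedness argument, before you may assert irreducibility.
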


The case when $v$ is  primitive case is proved in \cite{BLMNPS}, while the analogue result for when $v$ is of OG$10$-type ($m=2$ and $v_0^2=2$) is proved in \cite{Li-Pertusi-Zhao} (in this case, the statement is that the moduli space is irreducible projective with canonical singularities and that it admits a symplectic resolution which is an irreducible holomorphic symplectic manifold of OG$10$-type.)
The original argument of  \cite{BLMNPS}, which is also used in  \cite{Li-Pertusi-Zhao}, does not extend to the case when there is no symplectic resolution. Here we use the argument of the revised version of \cite{BLMNPS}, which was suggested by the author of this note.

\begin{rem}
Thanks to the recent \cite{Villalobos} and the results on the local structure of moduli spaces, one can conclude that these moduli spaces are always projective. The following argument shows a little more, namely that the class $\ell_\sigma$ is ample. \end{rem}

\begin{proof} 
We follow the strategy of \cite[\S 29-30]{BLMNPS}.
Let $\mc X \to C$ be a  non-isotrivial family of smooth cubic fourfolds parametrized by a smooth connected quasi-projective curve $C$, with the property that $X=\mc X_{c_0}$ for some point $c_0 \in C$ and that $v$ extends to a section of the local system of Mukai lattices $\wt {\mc H(}Ku(\mc X_c), \mathbb Z)$ which stays algebraic for every $c \in C$. This can be achieved by choosing $C$ inside the Hassett divisor corresponding to the degree $4$ component of $v$.
  As on pages 315-316 of \cite{BLMNPS}, we can find such a family such that there are infinitely many points $c \in C$ for which  $Ku(\mc X_c) \cong D^b(S_c, \alpha_c)$, for some twisted  K3 surface $(S_c, \alpha_c)$. Moreover, by Lemma 32.5 of loc. cit, we can assume that the derived equivalence sends $\Stab^\dagger( Ku(\mc X_c))$ to  $\Stab^\dagger(S_c, \alpha_c)$.
  
Up to passing to a finite cover, we can assume that $\mc X$ comes with a family of lines which are not contained in any plane and that the fixed lattice of the monodromy action on $\wt H(Ku(\mc X_{c_0}), \mathbb Z)$ is the lattice $M:=\wt H_{Hdg}(Ku(X_c))$, where $c \in C$ is very general \cite[Thm. 4.1]{Voisin-Handbook}. 
Then, by \cite[Prop. 30.5]{BLMNPS} there exists a family of stability conditions on $Ku(\mc X)$ over $C$ and with respect to the dual lattice $M^\vee$ (for the definition of family of stability conditions see Definition 21.15  of loc. cit.). 
Denote it by  $\famsigma \in \Stab_{M^\vee}(Ku(\mc X)/C)$. 
By \cite[Rem 30.6.]{BLMNPS}, we can choose it so that $\famsigma_{c_0}=\sigma \in \Stab^\dagger(Ku(X))$. 
As in the proof of \cite[Prop. 30.8]{BLMNPS}, by  the openness of geometric stability and the boundedness of relative moduli spaces, $\sigma_{c}$ is $v$-generic except possibly for finitely many $c \in C$.  
Up to passing to an open neighborhood $c_0 \in C$, which still contains infinitely many points where  $Ku(X_c)$ is geometric, we can thus assume that $\famsigma_c$ is $v$-generic for all $c \in C$.
 By \cite[Thm 21.24 (3)]{BLMNPS}, there exists a good moduli space,
\[
\pi: M_v(\mc D,\famsigma) \to C
\]
which is an algebraic space proper over $C$ and such that $M_v(\mc D,\famsigma)_c=M_v(\mc D_c,\sigma_c)$. 
Using \cite[Lem 30.4 and Prop. 30.8 (3)]{BLMNPS}, we can apply \cite[Thm 21.25]{BLMNPS}  and  conclude that there exists a real numerical class $[\ell_\famsigma]$ on $M_v(\mc D,\famsigma)$, which is strictly nef on each fiber. Up to slightly moving $\famsigma$ we can assume that this class corresponds to an actual line bundle $\ell_\famsigma$.

By Corollary \ref{cor irr}, all the fibers of $\pi$ are normal, in particular their connected components are irreducible. 
By Theorem \ref{Projnongeneric} the fibers of $\pi$ over the points where the corresponding Kuznetsov component is geometric are irreducible; as a consequence, there is a unique irreducible component $M' \subset  M_v(\mc D,\famsigma)$  dominating $C$.
Moreover, since $C$ is normal, $M' \to C$ has connected and thus irreducible fibers. 
By Mukai's generalized theorem \cite[Thm 31.1]{BLMNPS} (cf. also \cite{Perry-Mukai}), the restriction of $\pi$ to the open subset parametrizing stable objects is smooth, so, since $M'$ contains the stable locus of some fibers, it contains the stable locus of every fiber. In particular, any extra component of $M_v(\mc D,\famsigma)$ is entirely contained in the strictly semistable locus and is mapped to a point in $T$.

Fix a $c \in C$.  We  show by induction on $m$ that $M_{mv_0}(\sigma_c)$ is irreducible.  For $m=1$, this is  \cite[Thm 29.2]{BLMNPS}. 
Assuming the result is known for all $m'<m$, it follows that for every polystable type $\ua \neq (1,m)$, $P_{\ua}$ is irreducible. 
Since every $P_{\ua}$ contains $P_{(m,1)}$ in its closure (cf. Remark \ref{deepeststratum}), it follows by induction that the polystable locus of $M_{mv_0}(\sigma_c)$ (which is precisely the singular locus of $\Sing (M_{mv_0}(\sigma_c))$) is connected. 
Since $\Sing (M_{mv_0}(\sigma_c)) \cap M'_c$ is non-empty for every $c$ (indeed it is non empty for countably dense $c \in C$), it follows that $\Sing (M_{mv_0}(\sigma_c)) \subset M'_c$. Thus $M_{mv_0}(\sigma_c)=M'_c$ is irreducible of dimension $v^2+2$.

By \cite[Thm 4.3]{Kuznetsov-Markushevich} the stable locus $M_{mv_0}(\sigma_c)^s$ has a holomorphic symplectic form, so the irreducible algebraic space $M_{mv_0}(\sigma_c)$ is $K$-trivial. By  Corollary \ref{cordimsing} and the main result of \cite{Flenner} the holomorphic symplectic form on $M_{mv_0}(\sigma_c)$ extends to a holomorphic $2$-form on any resolution of singularities, i.e. $M_{mv_0}(\sigma_c)$ is a symplectic variety in the sense of Beauville (see Definition \ref{defin symplsing} below). Since by Corollary \ref{cordimsing} below, the codimension of the singular locus of $M_{mv_0}(\sigma_c)$ is greater or equal to $4$, by \cite[Cor 1]{Namikawa-Note} (see also Lemma \ref{lemterminal4} below), $M_{mv_0}(\sigma_c)$ has terminal singularities. By \cite{Kawamata-Def}, the total space $M_v(\mc D,\famsigma)$, which is normal and connected,  has canonical singularities.

Since by Theorem \ref{Projnongeneric} $\ell_\famsigma$ is relatively ample over a non-empty open subset of $C$, it is big over $C$. Since as we have observed the algebraic space $M_v(\mc D,\famsigma)$ has canonical singularities, by the base-point-free Theorem (see \cite[Thm 3.3 and Rem pg 162]{Ancona} for the result in the context of algebraic spaces) and the fact that $\ell_\famsigma$ is strictly nef, it follows that $\ell_\famsigma$ is ample over $C$.
\end{proof}

\begin{cor} \label{cordimsing} The  singular locus of $M_{mv_0}(\sigma)$ has codimension $\ge 4$.
\end{cor}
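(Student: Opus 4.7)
The plan is a codimension count over the stratification of $M_{mv_0}(\sigma)$ by polystable type. First, by Mukai's theorem in the Kuznetsov setting \cite[Thm 4.3]{Kuznetsov-Markushevich} the stable locus $P_{(1,m)}$ is smooth, so
\[
\Sing(M_{mv_0}(\sigma)) \subseteq \bigcup_{\ua \neq (1,m)} P_{\ua},
\]
and it suffices to show $\codim P_{\ua} \geq 4$ for every polystable type $\ua = (\alpha_1, m_1, \dots, \alpha_k, m_k)$ with $\sum_{i=1}^k \alpha_i m_i = m$ and $\ua \neq (1,m)$.

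Next, I would compute $\dim P_{\ua}$ via the finite morphism $\Phi_{\ua}$ of \eqref{stratificationmorph}: each factor $M_{m_i v_0}(\sigma)$ has the expected dimension $m_i^2 v_0^2 + 2$, by \cite[Thm 29.2]{BLMNPS} when $m_i = 1$ and by the inductive hypothesis in the proof of Theorem \ref{thmproj} when $m_i \geq 2$. The induction is available because $\ua \neq (1,m)$ forces $m_i < m$ in every summand: if $k \geq 2$ this is trivial, and if $k = 1$ then $\alpha_1 \geq 2$, so $m_1 \leq m/2$. This yields
\[
\codim P_{\ua} = v_0^2 \Bigl( m^2 - \sum_{i=1}^k m_i^2 \Bigr) - 2(k-1).
\]

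The remaining step is a numerical verification under the hypotheses $v_0^2 \geq 2$, $m \geq 2$, $(m, v_0^2) \neq (2,2)$. Expanding $m = \sum \alpha_i m_i$ gives the identity $m^2 - \sum m_i^2 = \sum (\alpha_i^2-1)m_i^2 + 2\sum_{i<j}\alpha_i\alpha_j m_i m_j$, and I would split into three cases. When $k = 1$, the codimension reduces to $v_0^2(m^2 - m_1^2) \geq 3 v_0^2 m^2/4 \geq 6$. When $k \geq 3$, the bound $2\sum_{i<j}\alpha_i\alpha_j m_i m_j \geq k(k-1)$ yields $\codim P_{\ua} \geq v_0^2 k(k-1) - 2(k-1) \geq 2(k-1)^2 \geq 8$. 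The subtle case is $k=2$: maximizing $\sum m_i^2$ subject to $\alpha_1 m_1 + \alpha_2 m_2 = m$ gives the bound $\sum m_i^2 \leq (m-1)^2 + 1$, attained at $(\alpha_1,\alpha_2,m_1,m_2) = (1,1,m-1,1)$, and hence $\codim P_{\ua} \geq 2 v_0^2(m-1) - 2$, which is $\geq 6$ except precisely when $(m,v_0^2) = (2,2)$. The hard part is this $k=2$ estimate: it becomes sharp exactly at the OG$_{10}$ threshold, which clarifies why that case must be excluded from the corollary's hypotheses. The minimum codimension turns out to be $6$, attained by decompositions into two distinct stable summands when $(m, v_0^2) \in \{(2,4), (3,2)\}$.
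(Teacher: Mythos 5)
Your proposal is correct and is essentially the paper's argument in expanded form: the paper's proof is a one-line reduction to the computation of \cite[Prop.~6.1]{Kaledin-Lehn-Sorger}, namely the inequality $m^2v_0^2+2 \ge \sum_i (m_i^2v_0^2+2)+4$ for every polystable type other than $(1,m)$, which is exactly the codimension count you carry out over the strata $P_{\ua}$. Your case analysis (and the observation that the bound degenerates precisely at $(m,v_0^2)=(2,2)$) checks out.
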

\begin{proof} This is  the same computation as \cite[Prop. 6.1]{Kaledin-Lehn-Sorger}:
Since $(m,v_0^2) \neq (2,2)$, for any decomposition $m=\sum m_i$, we have $(mv_0^2)+2 \ge \sum (m_i^2v_0^2+2)+4$, so the codimension of each stratum of the singular locus is at least $4$.
\end{proof}

 \section{Factoriality}
 
 We now turn to the  factoriality of the moduli spaces. 
The factoriality of Gieseker moduli spaces of sheaves on K3 surfaces with non primitive Mukai vector not of OG10-type was proved in \cite{Kaledin-Lehn-Sorger}, following ideas of \cite{Drezet, Drezet-Narasimhan}. We  adapt  their arguments to prove the result  for these moduli spaces, which are not known to be global quotients. Note that the factoriality does not necessarily hold {analytically} locally.
Recall that a normal variety is called factorial if every Weil divisor is Cartier. This holds if and only if it is locally factorial, in the sense that all of its local rings are unique factorization domains (UFD).

 We start with some preliminary results.
The following is an adaptation to this context of an argument of Dr\'ezet-Narasimhan \cite{Drezet-Narasimhan, Drezet}.

\begin{prop} \label{corstabslice} 
Let $F \in \mathfrak M_v(\sigma)$ be a $\sigma$-polystable object with Mukai vector $v$ as above, let $U$ be the corresponding slice as in Theorem \ref{luna} above, and set $G=\Aut(F)$.
Up to passing to a  $G$-invariant saturated open neighborhood of $u_0 \in U$ we can assume that for every $u \in U$ with closed orbit there exists an irreducible closed subvariety $Z=Z_u$ containing both $u$ and  $u_0$ and such that for any $G$-line bundle $L$ on $U$, if $G$ acts trivially on the fiber $L_{u_0}$, then $\Stab_u$ acts trivially on the fiber $L_u$.
\end{prop}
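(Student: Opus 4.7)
I would adapt the Dr\'ezet--Narasimhan argument \cite{Drezet-Narasimhan, Drezet} (cf.\ also \cite{Kaledin-Lehn-Sorger}) to the present setting by producing on $U$ a $\mathbb{G}_m$-action that commutes with $G$ and contracts every point to $u_0$, and then taking $Z_u$ to be the closure of the $\mathbb{G}_m$-orbit through $u$.

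The first step is to combine Luna's slice theorem (Theorem~\ref{luna}) with the Kuranishi/Ext-quiver description of the local model of $\mathfrak{M}_v(\sigma)$ at $[F]$ recalled before Proposition~\ref{Ufac}. After shrinking $U$ to a $G$-invariant saturated open neighborhood of $u_0$, this allows one to embed $U$ $G$-equivariantly as a closed subscheme of $V := \Ext^1(F,F)$, with $u_0$ corresponding to the origin, in such a way that the linear scaling action $t\cdot v = tv$ of $\mathbb{G}_m$ on $V$ preserves $U$. The scaling commutes with the linear $G$-action on $V$, and $u_0$ is then the unique $\mathbb{G}_m$-fixed point of $U$, attracted to by every other point as $t\to 0$. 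The technical heart of this step is that the leading quadratic part of the Kuranishi obstruction map equals the (homogeneous) moment map $\mu:\Ext^1(F,F)\to\Ext^2(F,F)$; the higher-order corrections must be controlled \'etale-locally and absorbed into a saturated neighborhood of $u_0$, exactly as in the treatment of \cite{Kaledin-Lehn-Sorger}.

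Granting the previous step, for any $u\in U$ with closed $G$-orbit I would set $Z_u := \overline{\mathbb{G}_m\cdot u}\subset U$. This is an irreducible closed subvariety containing both $u$ (at $t=1$) and $u_0$ (as the limit $t\to 0$). Since $\Stab_u\subset G$ acts $\mathbb{C}$-linearly on $V$, it commutes with the scaling $\mathbb{G}_m$, and as it fixes $u$ it fixes every point $t\cdot u$; by continuity it fixes $Z_u$ pointwise. Now let $L$ be a $G$-line bundle on $U$ on which $G$ acts trivially on the fiber $L_{u_0}$. Then $L|_{Z_u}$ is a $\Stab_u$-equivariant line bundle over a connected variety on which $\Stab_u$ acts trivially, and the assignment $z\mapsto \chi_z$, where $\chi_z\in \mathrm{Hom}(\Stab_u,\mathbb{G}_m)$ is the character of $\Stab_u$ on $L_z$, is locally constant as a map from $Z_u$ into the discrete character lattice of the reductive group $\Stab_u$. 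Connectedness of $Z_u$ then forces $\chi_u = \chi_{u_0}$, and since $\chi_{u_0}$ is trivial by hypothesis (being the restriction to $\Stab_u$ of the trivial $G$-character on $L_{u_0}$) we conclude that $\Stab_u$ acts trivially on $L_u$.

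The main obstacle I anticipate is the first step, namely producing a genuine contracting $\mathbb{G}_m$-action on $U$ without invoking formality of the dg algebra $R\Hom(F,F)$. Deformation theory naturally delivers only the homogeneous tangent cone $\mu^{-1}(0)\subset V$, and promoting this to a $\mathbb{G}_m$-action on the slice $U$ itself --- after a suitable \'etale shrinking and passage to a saturated open neighborhood of $u_0$ --- is where the real technical work lies. Once this is in place, the construction of $Z_u$ and the character argument are formal.
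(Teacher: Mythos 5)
There is a genuine gap, and it sits exactly where you anticipate it: the construction of the contracting $\mathbb{G}_m$-action on $U$. Deformation theory gives you $U$ (\'etale-locally) as the zero locus of a Kuranishi map $\kappa\colon \Ext^1(F,F)\to \Ext^2(F,F)$ whose \emph{quadratic part} is the moment map $\mu$, but whose higher-order terms are not homogeneous; the zero locus $\kappa^{-1}(0)$ is therefore not invariant under the scaling action, and only its tangent cone is identified with $\mu^{-1}(0)$. Upgrading this to an actual $\mathbb{G}_m$-invariant model is essentially the formality property of the dg algebra $R\Hom(F,F)$, which the paper explicitly chooses \emph{not} to assume (see the discussion after the definition of the Ext-quiver: ``we have chosen to use arguments that do not require it''). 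Even granting formality, the identification with the homogeneous quiver model is a priori only formal or analytic-local at $u_0$, so producing a genuine $\mathbb{G}_m$-action on a saturated \'etale neighborhood $U$, compatible with the $G$-action and with the map to the moduli stack, would still require additional work that your sketch does not supply. Since the entire construction of $Z_u=\overline{\mathbb{G}_m\cdot u}$ and the pointwise fixing of $Z_u$ by $\Stab_u$ rest on this step, the argument as written does not close.

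The paper takes a different route that avoids the issue entirely. It constructs $Z_u$ from the stratification of $U$ by polystable type: after shrinking to a saturated neighborhood one discards, for each type $\underline\alpha$, the components of the stratum $U_{\underline\alpha}$ whose closure misses $u_0$; then every $u$ with closed orbit lies in a stratum component whose closure $Z$ contains $u_0$, and $Z^\circ$ is the open piece of $Z$ inside a single stratum. The line-bundle statement is then proved in two steps (Lemma~\ref{lemGline}): first, the character of the stabilizer on $L$ is locally constant along a connected stratum because the stabilizer group scheme is \'etale-locally trivializable there (this is the analogue of your local-constancy argument, but note that in the paper's construction $\Stab_u$ does \emph{not} fix $Z$ pointwise, only the stratum has constant stabilizer type); second, triviality propagates from $u_0$ to the stratum by degenerating along a curve $D\to U$ and taking the flat limit of the relative automorphism group scheme inside the jumping fiber $\Aut(F)=GL(m)$. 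If you want to salvage your approach, you would either need to prove formality and carefully globalize the resulting $\mathbb{G}_m$-action, or replace the $\mathbb{G}_m$-orbit closures by the stratum closures as the paper does.
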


\begin{proof} Following \cite[\S 4]{Drezet-Narasimhan}, we claim that up to passing to a saturated open neighborhood of $u_0 \in U$, we can assume that for every $u \in U$ with closed orbit there exist a closed irreducible subvariety $Z$ containing $u$ and $u_0$ and with the property that there is an open subset $Z^\circ \subset Z$, containing $u$ and where the polystable type is constant. Let us show why this holds. Let $\underline \alpha_0$ be the polystable type of $F$. Up to passing to a open neighborhood of $F \in M_v(\sigma)$, we can assume that all other strata in an open neighborhood contain $P_{\underline \alpha_0}$, and hence $F$, in their closure. Up to passing to a saturated open  neighborhood of $U$, we can assume that this holds on the image of the \'etale morphism $U \sslash G \to M_v(\sigma)$.
For every $\underline \alpha$, let $U_{\underline \alpha}$ the preimage of $P_{\underline \alpha}$ in $U$. Then $u_0$ is contained in the closure of every $U_{\underline \alpha}$. 

Now let $W_{\underline \alpha} \subset U_{\underline \alpha}$ be the union of those irreducible components whose closure does not contain $u_0$. Then $W_{\underline \alpha} \subsetneq U_{\underline \alpha}$, since $u_0 $ belongs to the closure of any $U_{\underline \alpha}$.
Let $U':=U \setminus \cup_{\underline \alpha} W_{\underline \alpha}$. Note that $u_0 \in U'$. Since the orbit of $u_0$ is closed we can pass to a neighborhood $U'' \subset U'$ of $u_0$, which is saturated in $U$.
Let $u \in U''$ be a point with closed orbit. By Theorem \ref{luna} (2) the morphism $U \to \mathfrak{M}_v(\sigma)$ sends points with closed orbits to closed points of the stack. Since by Theorem \ref{thm gms}(1) these correspond precisely to polystable objects, $u \in U_{\underline \alpha}$ for some $\alpha$. Since by the choice of $U''$, $u \notin W_{\underline \alpha}$, the component of $U''_{\underline \alpha}:=U'' \cap U_{\underline \alpha}$ containing $u$ has $u_0$ in its closure. We let $Z^\circ$ be this component and let $Z$ be its closure.  This proves the claim.

Finally, we need to prove that for any $G$ line bundle $L$ on $U$, the desired property holds. This follows from  Lemma \ref{lemGline} below.
\end{proof}

For the following Lemma we assume that we have restricted $U$ so that it satisfies the claim in proof of Proposition. The reader should compare this to \cite[Lem 2.3]{Drezet}

\begin{lemma}  \label{lemGline} Suppose $U$ is such that for every $u \in U$ with closed orbit there exist a closed irreducible subvariety $Z$ containing $u$ and $u_0$ and with the property that there is an open subset $Z^\circ \subset Z$, containing $u$ and where the polystable type is constant. 
Let $L$ be a $G$-line bunde on $U$ and let $\ua$ be a polystable type and let $u \in U_{\ua}$ be a point. Then
\begin{enumerate}
\item[(i)] $G_u$ acts trivially on $L_u$  if and only if $G_v$ acts trivially on $L_v$ for all $v \in U_{\ua}$;
\item[(ii)] If $G$ acts trivially on $L_{u_0}$, then $G_v$ acts trivially on $L_v$ for all $v \in U_{\ua}$.
\end{enumerate}
\end{lemma}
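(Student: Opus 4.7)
The plan is to prove (ii) first and then deduce (i) from (ii) by a twisting argument. Let $\chi_0 \in X(G)$ be the character describing the $G$-action on $L_{u_0}$, and consider the twisted $G$-line bundle $L' := L \otimes \chi_0^{-1}$; by construction $G$ acts trivially on $L'_{u_0}$. Granting (ii) for $L'$, we obtain that $G_v$ acts trivially on $L'_v$ for every $v \in U_{\ua}$, i.e., $G_v$ acts on $L_v$ via $\chi_0|_{G_v}$. Since $\chi_0$ is a character of the ambient group $G$, it is invariant under inner automorphisms, so its restrictions to $G$-conjugate subgroups are identified under conjugation. As all stabilizers at closed-orbit points of $U_{\ua}$ are $G$-conjugate (the polystable type $\ua$ fixes the decomposition type of the object), $\chi_0|_{G_u}$ is trivial iff $\chi_0|_{G_v}$ is trivial, which yields (i).

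For (ii), we first produce a $G$-equivariant local trivialization of $L$ near $u_0$. Since $G = \Aut(F)$ is a product of general linear groups, it is linearly reductive. As $G$ acts trivially on $L_{u_0}$, one can pick a non-zero $s_0 \in L_{u_0}$, extend it to a local section of $L$, and average over $G$ to obtain a $G$-invariant section $\tilde s$ with $\tilde s(u_0) = s_0 \neq 0$. Replacing the non-vanishing locus $V = \{\tilde s \neq 0\}$ by its largest $\pi$-saturated open subset (which still contains the $G$-fixed point $u_0$), we get a $G$-stable saturated open neighborhood of $u_0$ on which $\tilde s$ trivializes $L$ $G$-equivariantly. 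Thus $G_w$ acts trivially on $L_w$ for every $w$ in this saturated neighborhood.

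To propagate this triviality from a neighborhood of $u_0$ to an arbitrary $v \in U_{\ua}$, use the hypothesis to produce an irreducible closed $Z \subset U$ containing both $v$ and $u_0$, together with an open $Z^\circ \subset Z$ containing $v$ whose points all have polystable type $\ua$. The open subsets $V \cap Z$ and $Z^\circ$ of the irreducible variety $Z$ are both non-empty, so they meet; pick $w$ in the intersection and let $w_0 \in \overline{Gw}$ be its closed-orbit representative, which lies in $V$ (by saturation) and in $U_{\ua}$ (by constancy of polystable type on $Z^\circ$). Hence $G_{w_0}$ acts trivially on $L_{w_0}$. To extend triviality from $w_0$ to every closed-orbit point of $U_{\ua}$, combine the local constancy of the $G$-conjugacy class of the character $\chi_w \in X(G_w)$ (a consequence of Luna's slice theorem together with the discreteness of the character lattice of a reductive group) with the irreducibility of the stratum $P_{\ua}$; the latter follows inductively from the irreducibility of each $M_{m_i v_0}$ established in Section \ref{sect ample}, since $P_{\ua}$ is the image of $\prod M_{m_i v_0}$ under the finite map $\Phi_{\ua}$. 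The case of a non-closed-orbit $v \in U_{\ua}$ reduces to the closed-orbit case by a standard specialization argument. The main obstacle is making this last local-constancy-and-connectedness argument precise, since the stabilizers in question form a family of $G$-conjugate (rather than literally equal) subgroups, so the notion of ``character varying locally constantly'' has to be interpreted modulo conjugation.
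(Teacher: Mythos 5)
Your argument for the ``triviality near $u_0$'' half of (ii) is correct and genuinely different from the paper's: you use linear reductivity to average a section of $L$ hitting $L_{u_0}$ into a $G$-invariant section $\tilde s$, nonvanishing on a saturated neighborhood of $u_0$ (since $\tilde s$ is invariant its zero locus is closed and $G$-stable, hence disjoint from $\pi^{-1}(\pi(u_0))$, so the saturation step goes through), whereas the paper degenerates the stabilizer group scheme along a curve $(D,d_0)\to (U,u_0)$ with $D\setminus d_0$ mapping into $U_{\ua}$ and uses the flat limit $H'\subset H$ to transport triviality of the character from $d_0$ to nearby fibers. Your route is arguably cleaner and avoids choosing a curve, and the bridge through $Z$ and $Z^\circ$ to produce a closed-orbit point $w_0\in U_{\ua}$ where $G_{w_0}$ acts trivially is fine.

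There is, however, a genuine gap, which you flag yourself: everything ultimately rests on the ``local constancy of the conjugacy class of the character $\chi_w$ along $U_{\ua}$, plus irreducibility'' step, and that step \emph{is} statement (i) --- it is never proved in your proposal, only asserted via a vague appeal to Luna's slice theorem and discreteness of the character lattice. This also makes the announced logical order circular: you propose to deduce (i) from (ii) by twisting by $\chi_0$, but your proof of (ii) invokes (i) (in the guise of the propagation from the single point $w_0$ to all of $U_{\ua}$). The twisting deduction moreover assumes that all closed-orbit stabilizers in $U_{\ua}$ are $G$-conjugate, which is again a claim of the same local-constancy type; it is not justified by ``the polystable type fixes the decomposition type,'' since that only fixes the abstract isomorphism class of the stabilizer, not its conjugacy class in $G$. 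The paper supplies the missing mechanism in its proof of (i): the $G$-linearization induces a homomorphism of group schemes $\varphi\colon H_{\ua}\to \mathbb G_{m,U_{\ua}}$ over $U_{\ua}$, where $H_{\ua}$ is the stabilizer group scheme, identified with $\Aut(\mc F_{\ua}/U_{\ua})$ via Theorem \ref{propalper}(2); after a base change trivializing this group scheme, $\varphi$ becomes a family of characters of a fixed reductive group, hence is locally constant, and connectedness of $U_{\ua}$ concludes. You need to carry out this rigidity argument (or a substitute for it); without it neither (i) nor the global form of (ii) is established.
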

\begin{proof}
(i) Let $H_{\ua} \subset G \times U_{\ua}$ be the stabilizer group scheme of the locally closed subset $U_{\ua}$. 
Let $\mc F \in D^b_{U-perf}(U \times X)$ be the family of objects corresponding to the morphism $U \to \mathfrak M_v$ and let $\mc F_{\ua}$ be its (derived) restriction to $U_{\ua}$. By Theorem \ref{propalper} (2), there is an isomorphism of $U_{\underline \alpha}$-group schemes $H_{\ua}/U_{\ua} \cong \Aut(\mc F_{\ua}/U_{\ua})$. 
A $G$-linearization of a line bundle $L$ on $U_{\ua}$ induces a morphism $\varphi \in \Hom_{U_\alpha-gp}(H_{\ua}, \mathbb G_{m, U_{\ua}})$ of group schemes over $U_{\ua}$. We wish to show that $\varphi$ is trivial at one point if and only if it is trivial at all points. 
Up to passing a base change $V_{\ua} \to U_{\ua}$, we can assume that the group scheme is trivial,
so the pullback of $\varphi$ to $V_{\ua}$ is locally trivial. Since $U_{\ua}$ is connected, the conclusion follows.

 
(ii) Consider a morphism $f: (D,d_0) \to (U,u_0)$ from a smooth pointed curve and such that $f(D\setminus d_0) \subset U_{\underline \alpha}$. Let $\mc F_D$ be the object in $D^b_{D-perf}(D \times X)$ induced by the morphism $D \to U_{\underline \alpha} \to \mathfrak M_v$. Then for $d \neq d_0$, $F_{d}$ is polystable of type $\alpha$, so $\Aut(\mc F_{d})\cong\prod GL(\alpha_i)$, while $\Aut(\mc F_{D,d_0})\cong GL(m)=G$. 
Up to passing to a finite cover we can assume that $\mc F_D =\oplus \mc F_i^{\oplus \alpha_i}$ where for each $i$, $\mc F_i$ is a family of semistable objects with Mukai vector $v_i:=m_i v_0$ and such that $\mc F_{i,d}$ is stable for $d \neq d_0$ and $\mc F_{i,d_0}=F_0^{m_i}$.
Consider the relative automorphism group $H \to D$ of $\mc F_D$. Up to restricting $D$, by base change and our choice of $\mc F_D$, we can assume that $H_{|D \setminus d_0}$ is the trivial group scheme. Clearly $H_{d_0}=GL(m)=G$, so as soon as $\alpha \neq (m,1)$,  $H \to D$ is not  flat; the  flat limit $H' \subset H$ of $H_{D\setminus d_0}$ in $H$ is also trivial and determines a closed embedding $\prod GL(\alpha_i) \subset GL(\sum \alpha_im_i)=GL(m)=G$.
Let $L$ be a $G$ line bundle on $U$ and consider its pullback to $D$. If $G$ acts trivially on $L_{u_0}$, then $H'_{d_0}$ acts trivially on $L_{u_0}$ and hence $H_d$ acts trivially on $L_d$ for all $d$. Using the first part of the Lemma the conclusion follows. 
\end{proof}

\begin{lemma} \label{quotfact} Let $G$ be a connected linear algebraic group, let $V$ be a factorial connected affine variety with an action of $ G$ and let $Z:=V \sslash G$. 
If for every $G$-line bundle $L$ and for every $v \in V$ with closed orbit the action of  the stabilizer $\Stab_v \subset G$ on $L_v$ is trivial, then $Z$ is factorial.
\end{lemma}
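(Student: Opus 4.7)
The plan is to show directly that every Weil divisor $D$ on $Z$ is Cartier. By linearity we may assume $D$ is effective. Set $D':=\pi^{-1}(D)$, a $G$-invariant effective Weil divisor on $V$. Since $V$ is an affine factorial connected variety, $D'$ is principal: $D'=\mathrm{div}(f)$ for some $f\in \mathcal{O}(V)$. The $G$-invariance of the divisor $D'$ combined with the connectedness of $G$ forces $f$ to be a semi-invariant, i.e.\ there exists a character $\chi\colon G\to \mathbb{G}_m$ such that $g^\ast f=\chi(g) f$ for every $g\in G$. Equivalently, $\chi$ endows the trivial line bundle $\mathcal{O}_V$ with a $G$-linearization, call it $\mathcal{O}_V(\chi)$, with respect to which $f$ is a $G$-invariant global section.

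Next I would apply the hypothesis of the lemma to the $G$-line bundle $\mathcal{O}_V(\chi)$: for every $v\in V$ with closed orbit, $\Stab_v$ acts trivially on the fiber of $\mathcal{O}_V(\chi)$ at $v$, which just says $\chi|_{\Stab_v}$ is trivial for every such $v$. This is precisely Kempf's descent criterion (cf.\ \cite{Drezet-Narasimhan}), so $\mathcal{O}_V(\chi)$ descends along $\pi$ to a (unique) line bundle $\bar L$ on $Z$ with $\pi^\ast \bar L \cong \mathcal{O}_V(\chi)$ as $G$-linearized bundles. Under this isomorphism the $G$-invariant section $f$ descends to a section $\bar f\in H^0(Z,\bar L)$, and by construction the zero locus of $\bar f$ is exactly $D$. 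Thus $D$ is cut out locally by a single regular section of a line bundle, so $D$ is Cartier; as $D$ was an arbitrary Weil divisor, $Z$ is factorial.

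The heart of the argument is Kempf's descent lemma, and the whole point of the hypothesis is to arrange that its criterion holds simultaneously for every $G$-line bundle. So the main (and really only) conceptual step is recognizing that the factoriality of $V$ reduces the problem to controlling the character $\chi$ attached to each $G$-invariant principal divisor, and that the hypothesis is precisely what is needed to descend the corresponding $G$-linearized trivial bundle. A minor but not substantial care goes into checking that $\pi^{-1}(D)$ behaves as a Weil divisor (which is unproblematic since $\pi$ is a good quotient) and that the natural linearization used above matches the one appearing in the hypothesis.
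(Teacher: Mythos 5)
Your overall strategy --- attach to a Weil divisor $D$ on $Z$ a $G$-line bundle on $V$ with an invariant section cutting out $\pi^{-1}(D)$, then descend via Kempf's criterion, which is exactly the hypothesis --- is the right one, and it is the same mechanism the paper uses. But there is a genuine gap at the very first step: from ``$V$ is a factorial connected affine variety'' you conclude that the $G$-invariant Weil divisor $D'=\pi^{-1}(D)$ is \emph{principal}. In this paper ``factorial'' means that every Weil divisor is Cartier, equivalently that all local rings of $V$ are UFDs; it does \emph{not} mean $\mathrm{Cl}(V)=0$. An affine variety can be locally factorial and still carry non-principal divisors (already a smooth affine curve of positive genus does), so no global equation $f$ with $\mathrm{div}(f)=D'$ need exist. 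Since the character $\chi$, the linearized trivial bundle $\mathcal O_V(\chi)$, and the descended section $\bar f$ are all built from this $f$, the argument as written does not go through.

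The repair is to work with the Cartier divisor $D'$ rather than a global equation for it: factoriality gives the line bundle $\mathcal O_V(D')$ with its canonical section $s_{D'}$, and $G$-invariance of $D'$ together with connectedness of $G$ yields a $G$-linearization for which $s_{D'}$ is invariant (this step, like your semi-invariance claim, is not formal: a priori $g^{*}f/f$ is only a unit of $\mathcal O(V)$, not a constant, and one needs Rosenlicht's units theorem, i.e.\ \cite[Lem.\ 2.2]{Knop-Kraft-Vust} as invoked in Remark \ref{rempicg}); then Kempf descent applies as you say. At that point you have essentially reconstructed the paper's proof, which instead pulls back $\mathcal O(-D_{|Z^{s}})$ from the smooth locus of $Z$, extends it to a $G$-line bundle on all of $V$ using the surjectivity $\Pic^G(V)\to\Pic^G(V^{s})$ of Remark \ref{rempicg} --- this is where local factoriality of $V$ enters, and it is the correct use of it --- descends by the hypothesis, and identifies the resulting divisor with $D$ by normality of $Z$. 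Your closing assertion that the zero locus of $\bar f$ ``is exactly $D$'' also deserves a word: one must check that every component of $D$ is dominated by a codimension-one component of $\pi^{-1}(D)$ (Krull's principal ideal theorem over the DVR at a generic point of $D$) and that the multiplicities come out equal to one; this is routine but is precisely the ``codimension reasons'' step in the paper.
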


\begin{rem} \label{rempicg} Under these assumptions every $G$-line bundle on a $G$-invariant open subset $U \subset V$ can be lifted to a  $G$-line bundle on $V$. 
Indeed,  the factoriality of $V$ implies that the restriction morphism  $\Pic(V) \to \Pic(U)$ is surjective, while the connectedness of $G$ implies by \cite[Lem 2.2 and Prop 2.3]{Knop-Kraft-Vust} that the kernel of the forgetful maps $f:\Pic^{G}(V) \to \Pic(V)$ and  $g:\Pic^{G}(U) \to \Pic(U)$ are surjected upon by $\chi(G)$, the group of characters of $G$. By \cite[Lem. 2.2]{Knop-Kraft-Vust} the normality of $V$ and $U$ imply that the cokernel of $f$ and $g$ are both contained in $\Pic(G)$. Putting these things together one can  check that the restriction map  $\Pic^G(V) \to \Pic^G(U)$ is surjective.
As a side remark, note that if the codimension of the complement of $U$ in $V$ is greater or equal to $2$, then $\Pic^G(V) \to \Pic^G(U)$ is an isomorphism.
\end{rem}

\begin{proof} It is enough to show that every Weil divisor on $Z$ is Cartier. Let $\pi: V \to Z:=V \sslash G$ be the quotient morphism and let $V^s=\pi^{-1}( Z^{s}) $ be the inverse image of the smooth locus of $Z$.
Let $D \subset Z $ be a Weil divisor and let $L^s=\pi^* \mc O(-D_{|Z^s})$ be the pullback  to $V^s$ of the ideal sheaf of $D_{|Z^s}$. The line bundle $L^s$ on $V^s$ is $G$-linearized so, by Remark \ref{rempicg}, it lifts to a $G$-line bundle $L$ on $V$. By assumption, the stabilizers satisfy the descent conditions, so $L$ descends to a line bundle on $Z$ extending the ideal sheaf of $D_{|Z^s}$. By codimension reasons ($Z$ is normal), the divisor corresponding to this line bundle has to be equal to $D$.
\end{proof}

\begin{cor}\label{U/Gfac} (cf. \cite{Drezet, Kaledin-Lehn-Sorger})
Let $F=F_0^m$ a polystable object of type $(m,1)$,  let $U$ be the corresponding slice as in Theorem \ref{luna} and set $G=\Aut(F)=GL(m)$. Then $U \sslash G$ is factorial.
\end{cor}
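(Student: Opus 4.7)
The plan is to apply Lemma \ref{quotfact} with $V = U$ and $G = GL(m)$. Using Propositions \ref{Ufac} and \ref{corstabslice}, we first shrink $U$ to a $G$-saturated open neighborhood of $u_0$ that is factorial and connected and on which the conclusion of Proposition \ref{corstabslice} holds: for every $G$-line bundle $L$ on $U$, if $G$ acts trivially on $L_{u_0}$, then $\Stab_u$ acts trivially on $L_u$ for every $u \in U$ with closed orbit. Following the proof of Lemma \ref{quotfact}, a Weil divisor $D \subset Z := U \sslash G$ produces a $G$-line bundle $L^s := \pi^{*}\mc O(-D|_{Z^{sm}})$ on $U^s := \pi^{-1}(Z^{sm})$, and since $U \setminus U^s$ has codimension $\ge 4$ by Corollary \ref{cordimsing}, Remark \ref{rempicg} extends $L^s$ uniquely to a $G$-line bundle $L$ on $U$. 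The task thus reduces to showing that $G = GL(m)$ acts trivially on the fiber $L_{u_0}$.

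The key computation is on the central subgroup of scalar matrices $\mathbb{G}_m \subset GL(m)$. Since $u_0$ is $G$-fixed, $L_{u_0}$ is a one-dimensional $GL(m)$-representation, hence the character $\det^k$ for some $k \in \mathbb{Z}$; its restriction to scalars is $\lambda \mapsto \lambda^{mk}$. On the other hand, these scalars act trivially on $\Ext^1(F, F) = M_m(\Ext^1(F_0, F_0))$ via $(\lambda I_m) A (\lambda I_m)^{-1} = A$, and Luna's linearization at the fixed point $u_0$ then shows that $\mathbb{G}_m$ acts trivially on an \'etale neighborhood of $u_0$ in $U$. Since the $\mathbb{G}_m$-fixed locus is closed and contains a Zariski-open in the connected $U$, we conclude that $\mathbb{G}_m$ acts trivially on $U$ itself. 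Consequently the $\mathbb{G}_m$-weight $\chi_u \in \mathbb{Z}$ of $L$ at $u$ is a locally constant, hence constant, function of $u \in U$. At any stable point $u^{\rm st} \in U^s$ (which exists since the stable locus is dense in $[U/G]$), $\Stab_{u^{\rm st}} = \mathbb{G}_m$ acts trivially on $L_{u^{\rm st}}$ because $L|_{U^s}$ is pulled back from $Z^{sm}$. Hence $\chi \equiv 0$, giving $\lambda^{mk} = 1$ for all $\lambda$, and thus $k = 0$.

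With $G$ acting trivially on $L_{u_0}$, Proposition \ref{corstabslice} propagates this to $\Stab_u$-triviality on $L_u$ for every closed-orbit $u$, and the descent argument of Lemma \ref{quotfact} exhibits $L$ as the pullback of a line bundle $\mc O(-D')$ on $Z$, which by normality of $Z$ and codimension must equal $\mc O(-D)$. Every Weil divisor on $Z$ is therefore Cartier, and $Z = U \sslash G$ is factorial. The main obstacle is the two-sided character computation on $\mathbb{G}_m$: it is crucial that $F = F_0^m$ has pure polystable type $(m,1)$, so that the scalars $\mathbb{G}_m \subset GL(m)$ act trivially on $U$ while simultaneously coinciding with the stabilizer at a generic stable point; these two forces together pin down $k = 0$. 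For mixed polystable types the analogous descent statement requires a more delicate argument, which is why this case is singled out here.
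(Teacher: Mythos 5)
Your proof is correct and takes essentially the same route as the paper's: shrink $U$ via Propositions \ref{Ufac} and \ref{corstabslice}, then run the descent argument of Lemma \ref{quotfact}, the whole point being that the relevant character of the stabilizer at $u_0$ is trivial. Where the paper disposes of that character in one line --- the central $\mathbb{G}_m\subset GL(m)$ acts trivially on $U$, so the quotient is by $PGL(m)$, which has no nontrivial characters --- you establish the same triviality by an explicit weight computation pinning the exponent of $\det^k$ to zero via the stable locus; the content is identical.
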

\begin{proof}
By Proposition \ref{Ufac}, we can assume that $U$ is factorial. The corollary then follows from  Lemma \ref{quotfact} above applied to $U\sslash G=U \sslash PGL(m)$, and the fact that $PGL(m)$ has no non-trivial characters.
\end{proof}

The following theorem is the main result of this section.

\begin{thm} \label{locally fact} Let $v=mv_0$, with $v_0^2>0$, be not of OG10-type and let $\sigma \in \Stab^{\dagger}(\mc D)$ be $v$-generic. Then $M_v(\sigma)$ is factorial with terminal singularities.
\end{thm}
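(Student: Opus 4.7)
The statement bundles factoriality and terminality of singularities. Terminality is essentially already inside the proof of Theorem \ref{thmproj}: Corollary \ref{cordimsing} gives $\codim\,\Sing M_v(\sigma)\ge 4$, the moduli space is a symplectic variety by Theorem \ref{thmproj}, and \cite[Cor. 1]{Namikawa-Note} then forces the singularities to be terminal. So the substantive task is factoriality, and the rest of my plan is about that.

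The natural plan is to check factoriality \'etale-locally around an arbitrary closed point $[F]\in M_v(\sigma)$ representing a $\sigma$-polystable object $F=\bigoplus_i F_i^{\oplus\alpha_i}$ of polystable type $\underline\alpha$. By the slice theorem (Theorem \ref{luna}), there is a pointed \'etale morphism $([U/G],u_0)\to(\mathfrak M_v(\sigma),[F])$ with $G=\Aut(F)=\prod_i GL(\alpha_i)$, inducing an \'etale morphism $U\sslash G\to M_v(\sigma)$ onto a neighbourhood of $[F]$. By Proposition \ref{Ufac} I may further shrink $U$ so that it is factorial (and a normal, complete intersection, regular in codimension $\ge 3$). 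Since factoriality of a normal scheme is \'etale-local, it suffices to prove $U\sslash G$ is factorial. Lemma \ref{quotfact} then reduces this to the single claim: for every $G$-linearized line bundle $L$ on $U$ and every $u\in U$ with closed $G$-orbit, $\Stab_u$ acts trivially on $L_u$. Proposition \ref{corstabslice} further reduces this to the vanishing, for every such $L$, of the character $\chi_L\colon G\to\mathbb G_m$ encoding the $G$-action on the fibre $L_{u_0}$.

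Since characters of $G=\prod_i GL(\alpha_i)$ are tuples $(n_1,\dots,n_k)\in\Z^k$, the task splits into proving each $n_i=0$. The base case is the deepest polystable type $(m,1)$ from Remark \ref{deepeststratum}, where $G=GL(m)$ and the central $\mathbb G_m$ acts trivially on the quotient, so $U\sslash G=U\sslash PGL(m)$; since $PGL(m)$ has no nontrivial characters this case is exactly Corollary \ref{U/Gfac}. For a general type, my plan is to produce, for each index $i$, a pointed curve $f\colon(D,d_0)\to(U,u_0)$ whose general fibre lies in a deeper stratum $U_{\underline\beta}$ obtained from $\underline\alpha$ by further degenerating the block $F_i^{\oplus\alpha_i}$ into $F_{i,0}^{\oplus\alpha_i m_i}$ with $F_{i,0}$ stable of class $v_0$. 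The relative stabiliser along $f$ then realises a copy of $GL(\alpha_i m_i)$ inside $G$ through the standard embedding $GL(\alpha_i)\hookrightarrow GL(\alpha_i m_i)$; Lemma \ref{lemGline}(ii), applied along $D$, propagates triviality of $\chi_L$ at $u_0$ to triviality of the restricted stabiliser character at the generic point of $f$, and perfectness of $PGL(\alpha_i m_i)$ then forces $n_i=0$.

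The main obstacle is the construction and bookkeeping of these test families: one must verify that the strata $U_{\underline\beta}$ really do meet every component of $U$ passing through $u_0$ in the required way, and that the corresponding stabiliser embeddings are sufficient to detect each coordinate of $\chi_L$. This is precisely where the hypothesis $(m,v_0^2)\ne(2,2)$ enters crucially: Corollary \ref{cordimsing} guarantees that the polystable stratification has singular locus of codimension $\ge 4$, giving the room needed to build such degenerations, and the same codimension bound is what lets Grothendieck's factoriality theorem supply the starting factoriality of $U$ in Proposition \ref{Ufac}.
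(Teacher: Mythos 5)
Your reduction of terminality to Corollary \ref{cordimsing} plus Namikawa's criterion is exactly what the paper does, and your overall framework (slice theorem, Proposition \ref{Ufac}, descent via Lemma \ref{quotfact}) is the right one. The factoriality argument, however, has two genuine gaps.

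First, the reduction to ``$\chi_L$ vanishes for \emph{every} $G$-linearized line bundle $L$ on $U$'' is not a correct reduction: that statement is false whenever the stabilizer of $u_0$ is all of $G=\prod_i GL(\alpha_i)$ and $G$ has a nontrivial character (twist the trivial bundle by $\det$ on one factor). Descent only requires triviality of the stabilizer actions for the \emph{specific} $G$-line bundle obtained by pulling back the ideal sheaf of the given Weil divisor from the smooth locus of the quotient and extending it over $U$ (Remark \ref{rempicg}); for an arbitrary linearization there is nothing true to prove. This is precisely why the paper's proof is structured globally rather than point by point: it first establishes factoriality on a neighborhood $M_v(\sigma)^\circ$ of the deepest stratum $P_{(m,1)}$, where the acting group is effectively $PGL(m)$ and hence characterless, so that there the ``all line bundles'' statement does hold (Corollary \ref{U/Gfac}); at a general $[F]$ it then takes only the line bundle $L$ attached to the divisor, observes that its descent data is trivial over $V^\circ$ (the part of the slice landing in $M_v(\sigma)^\circ$, where the divisor is already Cartier), and propagates that triviality stratum by stratum using Lemma \ref{lemGline}(i) together with the fact that every stratum meeting the slice also meets $M_v(\sigma)^\circ$, by irreducibility of the $P_{\ua}$ and Remark \ref{deepeststratum}. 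The paper even remarks that factoriality need not hold analytically locally, so a purely local computation at $[F]$ cannot succeed; some global input is essential.

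Second, your test curves go in the wrong direction. In the \'etale slice at $[F]$ the nearby points are \emph{less} degenerate than $[F]$: the strata $U_{\underline\beta}$ present in $U$ are exactly those with $u_0\in\overline{U_{\underline\beta}}$. The stratum obtained by splitting $F_i^{\oplus\alpha_i}$ further into $F_{i,0}^{\oplus\alpha_i m_i}$ lies in the \emph{closure} of $P_{\ua_0}$, not conversely; in particular $P_{(m,1)}=M_{v_0}$ is closed in $M_v(\sigma)$ and does not meet the slice at all unless $[F]$ already has type $(m,1)$. So there is no pointed curve $(D,d_0)\to(U,u_0)$ whose generic point lies in a strictly deeper stratum, and the intended stabilizer jump $GL(\alpha_i)\hookrightarrow GL(\alpha_i m_i)$ cannot be realized inside the slice; the curves in the proof of Lemma \ref{lemGline}(ii) run the other way (generic point less degenerate, special point $u_0$). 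Relatedly, Corollary \ref{cordimsing} is used for Grothendieck's factoriality of $U$ and for terminality, not to ``make room'' for degenerations. The connection to the deepest stratum must be made globally, as above, not by a local degeneration at $[F]$.
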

\begin{proof}

As in \cite{Kaledin-Lehn-Sorger}, we start by showing that $M_v(\sigma)$ is factorial in a neighborhood $M_v(\sigma)^\circ$ of the deepest stratum $P_{m,1}=M_{v_0}$ of the singular locus. Then we use Proposition \ref{corstabslice} to show that from the factoriality around the deepest stratum we can deduce the factoriality at all points of $M_v(\sigma)$.

 By Corollary \ref{U/Gfac}  the \'etale slices $U \sslash PGL(m)$ centered at the points of $P_{m,1}$ are  factorial. Hence there is an \'etale morphism from a factorial variety onto an open subset $M_v(\sigma)^\circ \subset M_v(\sigma)$ of $P_{m,1}$. This shows the factoriality of $M_v(\sigma)^\circ$.
Note that since $P_{(m,1)}$ is contained in the closure of every other stratum, see Remark \ref{deepeststratum}), then for every polystable type $\ua$, $P_{\ua} \cap M_v(\sigma)^\circ \neq \emptyset$ (by Theorem \ref{thmproj}, we can assume that $P_{\ua}$ is irreducible).

Now let $[F] \in M_v(\sigma)$ be a point corresponding to a $\sigma$-polystable object $\oplus F_i^{ \oplus \alpha_i}$ of type $\underline \alpha_0$ and set $G=\Aut(F)$. We will show that there exists an open neighborhood of $[F] \in M_v(\sigma)$ with the property that every Weil divisor passing through $[F]$ is Cartier. This implies that the ideal sheaf of every effective Weil divisor in $M_v(\sigma)$ is locally free at  $[F]$, and hence that the local ring of $M_v(\sigma)$ at $[F]$ is a UFD. This is enough to prove the theorem.

Let $(V,v_0)$ be an \'etale slice at $F$ with \'etale morphism $\epsilon: V \sslash G \to M_v(\sigma)$. Up to restricting $V$, we can assume that $V $ is factorial (Proposition \ref{Ufac}) and that it satisfies the conclusions of Proposition \ref{corstabslice}.
Let $W \subset M_v(\sigma)$ be the image of $V\sslash G$ in $M_v(\sigma)$ and set $W^\circ=W \cap M_v(\sigma)^\circ$. Similarly, denote by $V^\circ$ its preimage in $V$. Since $P_{\ua}$ is irreducible and $P_{\ua} \cap M_v(\sigma)^\circ \neq \emptyset$, for every polystable type $\ua$ for which $P_{\ua} \cap W \neq 0$, the open subset $P_{\ua} \cap W^\circ$ is also non empty.

 Now let $D $ be a Weil divisor passing through $[F]$. We  show that $D_{|W}$ is Cartier. This is equivalent to showing that the Weil divisor $\epsilon^{-1}(D)$ is Cartier on $V \sslash G$. 
Since $D_{| W ^\circ}$ is Cartier, so is  $\epsilon^{-1}(D_{| W ^\circ})$; let  $N^\circ$ be the corresponding line bundle  on $V^\circ \sslash G$. Set $L^\circ:=\pi^* N^\circ$. Then for every $v \in V^\circ$ with closed orbit, $G_v$ acts trivially on $L^\circ_v$.
By Remark \ref{rempicg}, the $G$-line bundle $L^\circ$ extends to a $G$-line bundle $L$ on $V$. Since for every polystable type $\ua$ appearing in $W$, $P_{\ua} \cap W^\circ \neq 0$, repeated applications of Lemma \ref{lemGline} imply that for every $v \in V$ with closed orbit, $G_v$ acts trivially on $L_v$. 
This proves the factoriality.

Finally, from Namikawa's terminality criterion for symplectic varieties (cf. Lemma \ref{lemterminal4} ) and Corollary  \ref{cordimsing} it follows that $M_v(\sigma)$ has terminal singularities.
\end{proof}

\section{Preliminaries on Irreducible symplectic varieties}

In this section we recall the main definitions  and first properties of irreducible symplectic varieties, then we prove some basic results needed in Section \ref{modspacesISV}. Many of the results that appear in this section have already appeared in the literature, or are well known to the experts; we include them in this section for the readers' convenience. We will focus the treatment on projective varieties, even though when dealing with deformations of projective symplectic varieties one naturally encounters non-projective one. We will try to highlight when this is the case, and refer the reader to   \cite{Bakker-Lehn-global} for the notion of symplectic K\"ahler spaces and their first properties.

\begin{defin}[Symplectic varieties \cite{Beauville-sing}]  \label{defin symplsing} A normal variety $M$ is called holomorphic symplectic if its smooth locus has a holomorphic symplectic form which extends to a holomorphic form on any resolution of the singularities of $M$.
\end{defin}

A symplectic variety has canonical and, in fact, rational Gorenstein singularities.
A holomorphic form on the smooth locus of a normal variety is called a reflexive form, so we will refer to the holomorphic symplectic form in the definition of symplectic variety as the reflexive symplectic form. 

\begin{defin}[Irreducible  symplectic variety \cite{Greb-Kebekus-Peternell-singular}] Let $M$ be a projective symplectic variety with (reflexive) symplectic form $\sigma$.
A projective symplectic variety $M$ is called an irreducible (holomorphic) symplectic variety if for every quasi-\'etale  cover $f: Y \to M$
the algebra of reflexive forms on $Y$ is generated by the (reflexive) pullback of $\sigma$.
\end{defin}

Recall that a quasi-\'etale cover is a finite surjective morphism of normal varieties that is \'etale in codimension one. 

\begin{example}
Recall that an irreducible holomorphic symplectic manifold  is a simply connected compact K\"ahler manifold whose space of holomorphic $2$-forms is spanned by a holomorphic symplectic form \cite{Beauville83}. If $M$ is a (projective) irreducible holomorphic symplectic manifold, then $M$ is an irreducible holomorphic symplectic variety. Indeed,  it is simply connected and  a holonomy computation \cite[Prop. 3]{Beauville83} shows that its algebras of holomorphic forms is generated by the holomorphic symplectic form.
 Conversely, if $M$ is a smooth irreducible holomorphic symplectic variety, then $M$ is an irreducible holomorphic symplectic manifold: indeed, by \cite[Thm 1]{Schwald-Definition} $M$ is irreducible holomorphic symplectic manifold if and only if it is holomorphic symplectic and has no irregularity.
\end{example}

The following example shows that being irreducible holomorphic symplectic variety is not necessarily preserved by birational modifications (cf. Remark \ref{remHilbnISV}).

\begin{example} \label{HilbnotISV}
Let $S$ be a projective K3 surface and let $h: S^{[n]} \to  S^{(n)}$ be the Hilbert-Chow morphism from the Hilbert scheme of $n \ge 2$ points on $S$ to the symmetric $n$-th product of $S$. Then $S^{[n]}$ is an irreducible symplectic variety but $ S^{(n)}$ is not, since it admits a quasi-\'etale cover from the Cartesian product $S^n$.
\end{example}

The relevance of irreducible holomorphic symplectic variety  is clear from the following theorem, which generalizes the Beauville-Bogomolov decomposition theorem \cite{Beauville83} and whose proof  is the result of a joint effort\cite{Druel-Guenancia-DT, Durel-DT, Greb-Guenancia-Kebekus, Horing-Peternell-foliations}.

\begin{thm}(Singular decomposition theorem,  \cite[Thm 1.5]{Horing-Peternell-foliations})
Let $Z$ be a normal projective variety with klt singularities and numerically trivial canonical class. Then there exists a quasi-\'etale finite cover $f: Y \to Z$ where $Y$ is a projective variety with canonical singularities which decomposes in the product of abelian varieties, irreducible symplectic varieties, and irreducible Calabi-Yau varieties. Such a decomposition of $Y$ is unique up to reordering of the factors.
\end{thm}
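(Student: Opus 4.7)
My plan is to follow the strategy that proves the singular Beauville-Bogomolov decomposition via a holonomy decomposition of the reflexive tangent sheaf of a suitable cover, followed by algebraic integrability of the resulting foliations.

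The first step would be a preliminary reduction to the case $K_Y \sim 0$. Using finiteness of the \'etale fundamental group of klt varieties with numerically trivial canonical class (Greb-Kebekus-Peternell) together with abundance for pairs of this type, I would extract a quasi-\'etale cover on which the canonical class is actually trivial, not merely numerically so. On this cover, the work of Eyssidieux-Guedj-Zeriahi provides a singular Ricci-flat K\"ahler-Einstein metric on the smooth locus, and a Cheeger-Gromoll type splitting combined with Berger's classification identifies the restricted holonomy as a product of factors of type $\{1\}$, $SU(k)$, or $Sp(k)$. A singular holonomy principle (Greb-Guenancia-Kebekus) then descends this analytic splitting to a decomposition of the reflexive tangent sheaf $(\Omega_Y^{[1]})^\vee = \bigoplus_i \mc F_i$ into pairwise orthogonal integrable foliations whose generic leaves are flat, strict Calabi-Yau, or irreducible holomorphic symplectic, respectively.

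The main obstacle, and the technical heart of the argument, will be to promote this foliated splitting into an actual decomposition of $Y$ up to a further quasi-\'etale cover, which amounts to proving that each $\mc F_i$ is algebraically integrable. For the flat factor I would apply a singular Bochner principle and a Kawamata-type Albanese construction to produce an isogeny onto an abelian variety. For the $SU$ and $Sp$ factors, Druel's algebraicity theorem together with the extensions of H\"oring-Peternell apply: each leaf has $\mathbb{Q}$-trivial canonical class, and positivity of direct images combined with Bogomolov-McQuillan-type techniques force algebraic integrability, with leaves fitting into a flat family whose base and fibre are normal projective. The leaf-space construction, followed by pulling back along an appropriate quasi-\'etale cover, would then produce the desired product decomposition.

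Uniqueness up to reordering reflects that the decomposition is intrinsic to the holonomy representation of the Ricci-flat metric; equivalently, the irreducible symplectic factors are characterized by the exterior algebra of reflexive forms being generated by a single reflexive symplectic form, the Calabi-Yau factors by a single trivializing reflexive top form, and the abelian factor by the span of global reflexive $1$-forms. Since these features are intrinsic to $Y$ itself, any two such decompositions refine a common one and must therefore agree up to permutation.
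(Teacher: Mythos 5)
The paper does not prove this statement at all: it is quoted as an external black box, with the proof deferred entirely to the cited literature (Druel--Guenancia, Greb--Guenancia--Kebekus, H\"oring--Peternell), so there is no internal argument to compare yours against. Your outline is a faithful roadmap of how those references actually establish the theorem --- reduction to $K_Y\sim 0$ via finiteness of the \'etale fundamental group and abundance, the singular Ricci-flat metric and holonomy classification, the integrable splitting of the reflexive tangent sheaf, and algebraic integrability of the $SU$- and $Sp$-factors via Druel and H\"oring--Peternell. The only caveat is that this is necessarily a sketch: the step you compress most is the passage from algebraic integrability of the foliations to an actual product decomposition after a quasi-\'etale cover, which in the literature requires a delicate ``weak splitting'' argument and repeated torus covers rather than a bare leaf-space construction; but as a summary of the proof architecture it is accurate.
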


By definition (see \cite{Greb-Kebekus-Peternell-singular}), an irreducible Calabi-Yau variety is a normal projective variety  with canonical singularities and trivial canonical class  and whose algebra of reflexive forms, along with the algebra of reflexive forms on any quasi-\'etale finite cover, is generated by a nowhere vanishing reflexive form of top degree. Note that there is a singular decomposition theorem also in the K\"ahler  setting, see \cite{Bakker-Guenancia-Lehn}.

A special role is played by symplectic varieties that are $\Q$-factorial and terminal, since in many ways they behave like smooth ones. One of the most important results for this class of symplectic varieties is Namikawa's theorem on locally trivial deformations, which we recall below in Theorem \ref{thmnamikawa}. It is thus important to have characterizations of  terminal and of $\Q$-factorial symplectic varieties.

\begin{lemma}(\cite[Cor 1]{Namikawa-Note}) \label{lemterminal4} Let $X$ be a symplectic variety. Then $X$ has terminal singularities if and only if the singular locus of $X$ has codimension $\ge 4$.
\end{lemma}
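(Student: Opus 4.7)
My plan is to prove the two directions separately, both by reducing to the behavior of discrepancies under a resolution $f\colon \widetilde X \to X$. Since a symplectic variety is Gorenstein with $K_X \sim 0$ and has canonical singularities, for any log resolution we have $K_{\widetilde X} = \sum a_i E_i$ with $a_i \ge 0$, and terminality is equivalent to all $a_i$ being strictly positive. The key computational handle is the extended symplectic form: $\widetilde\omega$ is a holomorphic $2$-form on $\widetilde X$ and $\widetilde\omega^n$ is a section of $K_{\widetilde X}$ which, away from the exceptional locus, agrees with the trivializing section $f^*(\omega^n)$ of $K_X$. Hence $\mathrm{div}(\widetilde\omega^n)=\sum a_i E_i$, and $a_i$ equals the order of vanishing of $\widetilde\omega^n$ along $E_i$.

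For the direction \emph{terminal} $\Rightarrow \codim \Sing X \ge 4$, I argue contrapositively. Suppose $\Sing X$ has an irreducible component $Z$ of codimension $c \le 3$. By Kaledin's stratification theorem for symplectic singularities the Poisson strata (and in particular $\Sing X$) have only even codimension, so $c = 2$. A general transverse slice to $Z$ at a smooth point of $Z$ is a $2$-dimensional normal Gorenstein singularity which inherits a symplectic form, hence must be a du Val ($ADE$) surface singularity by the surface classification. Its minimal resolution is crepant, and this exceptional divisor globalizes (choosing $\widetilde X$ to factor through a resolution along $Z$) to an exceptional divisor $E_i$ with $f(E_i) = Z$ and $a_i = 0$, contradicting terminality.

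For the direction $\codim \Sing X \ge 4 \Rightarrow$ \emph{terminal}, fix an exceptional divisor $E_i$ and let $Z_i := f(E_i) \subset \Sing X$, so $c_i := \codim_X Z_i \ge 4$. I would reduce to the isolated singularity case by cutting with $\dim Z_i$ general hyperplane sections through a smooth point of $Z_i$: by Bertini-type arguments for symplectic singularities, the slice $X' \subset X$ is a symplectic variety of dimension $c_i$ with an isolated singular point $0$, the induced $f'\colon \widetilde{X'} \to X'$ is a resolution, and the discrepancy of the sliced exceptional divisor $E'_i$ equals $a_i$. Now the entire exceptional locus of $f'$ is compact and $E'_i$ lies in the fiber $f'^{-1}(0)$. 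The claim is that $\widetilde{\omega'}$ has rank strictly less than $c_i$ at a general point of $E'_i$, which immediately yields $\widetilde{\omega'}^{\,c_i/2} \equiv 0$ along $E'_i$ and hence $a_i \ge 1$. To establish the rank drop I would use that $T_e E'_i$ is isotropic for $\widetilde{\omega'}_e$: the $2$-form $\widetilde{\omega'}$ restricted to the compact exceptional fiber is a holomorphic form that vanishes by a Liouville/Poisson-type argument (the form is exact on a tubular neighborhood of the exceptional fiber since it is the pullback of a form on a Stein neighborhood of $0 \in X'$), combined with the fact that exceptional fibers of a resolution of a symplectic variety are isotropic with respect to the extended form. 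Since $\dim T_e E'_i = c_i - 1 \ge 3 > c_i/2$ as soon as $c_i \ge 4$, this isotropic subspace exceeds the Lagrangian dimension of a putative non-degenerate $\widetilde{\omega'}_e$, forcing degeneracy.

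The principal obstacle is justifying the isotropy of $T_e E'_i$ in the isolated-singularity step: the naive continuity argument from $\widetilde\omega = f^*\omega$ off the exceptional locus is delicate because $\omega$ does not extend continuously across $\Sing X$ a priori. I would invoke the Poisson-theoretic argument of Namikawa (essentially the content of \cite{Namikawa-Note}, whose statement we are proving) which shows that fiber tangent directions lie in the kernel of $\widetilde\omega$; alternatively one may use that $\widetilde{\omega'}$ restricts to a closed holomorphic $2$-form on the compact exceptional locus which, by a mixed Hodge theoretic argument applied to the contractible neighborhood, must vanish. Once the isotropy is in place, the codimension bookkeeping is elementary and the remaining ingredients (transverse slicing preserving the discrepancy, Kaledin's evenness of Poisson-strata codimensions) are standard.
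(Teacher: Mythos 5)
A preliminary remark: the paper does not prove this lemma at all --- it is imported verbatim from \cite[Cor.\ 1]{Namikawa-Note} --- so your attempt has to be measured against Namikawa's argument rather than anything in the text. Your first direction (terminal $\Rightarrow$ $\codim\Sing X\ge 4$, via evenness of the strata and the du Val transverse slice along a codimension-two component, whose crepant exceptional curve globalizes to a discrepancy-zero divisor) is correct and is essentially the standard proof.

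The second direction has a genuine gap, and it is the one you flag yourself: the isotropy of $T_eE_i'$ for $\widetilde{\omega'}$. Your primary justification is circular by your own admission, and the fallback does not work as stated. The form $\widetilde{\omega'}$ is \emph{not} pulled back from anything defined across the singular point, so it is not exact on $U=f'^{-1}(V)$ even for $V$ a contractible Stein neighborhood of $0$ --- $U$ retracts onto the positive-dimensional fibre and has nontrivial $H^2$ --- and ``a closed holomorphic $2$-form on the compact exceptional fibre must vanish'' is false for general normal singularities: the cone over a polarized K3 surface has exceptional fibre a K3, which carries a nonzero holomorphic $2$-form. The missing input is precisely the rationality of symplectic singularities. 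One correct route: by functoriality of the reflexive pullback of forms on klt spaces (Greb--Kebekus--Kov\'acs--Peternell, Kebekus--Schnell), the pullback of $\omega$ to a resolution $\widetilde F_0$ of a fibre component computed through $\widetilde F_0\to Y\to X$ agrees with the one computed through $\widetilde F_0\to\{x\}\hookrightarrow X$, which is zero; alternatively one uses $R^2f_*\mathcal O_Y=0$ to kill the $(2,0)$-part of the image of $H^2(U,\C)\cong H^2(F,\C)$ in the pure Hodge structure $H^2(\widetilde F_0,\C)$. Without naming one of these inputs the proof does not close.

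There are also secondary problems with the reduction to an isolated singularity. Hyperplanes through a fixed point $w\in Z_i$ have all of $f^{-1}(w)$ in the base locus of the pulled-back linear system, so Bertini does not make $Y'$ smooth along the fibre where you need it (slice by general hyperplanes \emph{not} through a point, so that $X'\cap Z_i$ is a general finite set). The codimension $c_i$ of $Z_i=f(E_i)$ need not be even, since $Z_i$ is merely contained in $\Sing X$ and need not be a stratum, so $X'$ can be odd-dimensional. Most seriously, the restriction $\omega'=\omega|_{X'_{\mathrm{reg}}}$ can degenerate along a divisor of $X'_{\mathrm{reg}}$ whose closure meets the singular point; then $\mathrm{div}\bigl(\widetilde{\omega'}^{\,c_i/2}\bigr)$ picks up contributions from $f'^*$ of that divisor along $E_i'$, and vanishing of $\widetilde{\omega'}^{\,c_i/2}$ along $E_i'$ no longer implies $a_i\ge 1$. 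All of this is repairable, but as written the reduction does not preserve the quantity you are trying to bound.
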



In terms of $\Q$-factoriality, we will use Koll\'ar-Mori's characterization of the $\Q$-factoriality for projective rational varieties (see \cite[12.1.6]{Kollar-Mori-flips}, or also (\ref{KM}) below) (and tangentially we will also use the analogous criterion formulated by Bakker-Lehn for K\"ahler non projective ones \cite[\S 2.12]{Bakker-Lehn-global-moduli}).

Next, we turn to deformations and families. Note that when considering deformations of projective symplectic varieties, one naturally leaves the projective category.  For this reason, the total space and the parameter space of any family that we will consider below will be a complex space and the proper morphisms below will not be projective in general. We refer the reader to  \cite{Bakker-Lehn-global-moduli} for a more complete treatment on these topics.

\begin{defin} Let $f: \mc X \to T$ be a family of proper complex spaces, i.e. a proper flat morphism of complex spaces with connected fibers. If $0 \in T$ is a distinguished point and $T$ is connected, we call $\mc X$ a deformation of $\mc X_0$.
  The family (or deformation) $\mc X$ is said to be locally trivial at $0 \in T$, if for all $x \in \mc X_0$ the deformation $(\mc X, x) \to (T, 0)$ of germs is trivial, i.e. $(\mc X, x) \cong (\mc X_0, x) \times (T, 0)$.
The family $\mc X$ is said to be locally trivial  if it is  locally trivial at $t$ for all $t \in T$.\\
We say that two proper varieties $X_1$ and $X_2$ are deformation equivalent under a locally trivial deformation if there exists a locally trivial family $\mc X \to T$ over a connected bases and two points $t_1, t_2 \in T$ such that $\mc X_{t_1} \cong X_1$ and $\mc X_{t_2} \cong X_2$.
\end{defin}

The proof of the following Lemma was communicated to us by B. Bakker.

\begin{lemma} \label{lem loctrivZariski}
Let $f:\mc X \to T$ be a family of proper complex spaces over a smooth $T$. If $f$ is locally trivial at a point $t \in T$, then $f$ is locally trivial on a Zariski open subset of $T$. 
\end{lemma}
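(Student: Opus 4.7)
The plan is to combine two ingredients: an analytic openness statement, which upgrades pointwise local triviality at $t$ to local triviality on an analytic open neighbourhood of $t$; and a sheaf-theoretic realization of the local-triviality obstruction, which promotes analytic openness to Zariski openness.

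First I would show that local triviality at $t$ extends to an analytic open neighbourhood of $t$ in $T$. By assumption, for every $x \in \mc X_t$ one can find open analytic neighbourhoods $V_x \ni x$ in $\mc X$ and $W_x \ni t$ in $T$ together with an isomorphism $V_x \cong (V_x \cap \mc X_t) \times W_x$ compatible with the projection to $W_x$. Since $\mc X_t$ is compact, finitely many $V_{x_1},\dots,V_{x_n}$ cover it; since $f$ is proper, $f(\mc X \setminus \bigcup_i V_{x_i})$ is analytically closed in $T$ and misses $t$, so there is an analytic open $W' \ni t$ contained in $\bigcap_i W_{x_i}$ with $f^{-1}(W') \subset \bigcup_i V_{x_i}$. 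Restricting the product trivializations $V_{x_i} \cong (V_{x_i} \cap \mc X_t) \times W_{x_i}$ to $W'$, one obtains a local trivialization at every point of $f^{-1}(W')$, so $f$ is locally trivial on $W'$.

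Next I would encode local triviality as the vanishing of a morphism of coherent sheaves on $T$. Concretely, using the relative cotangent complex of $f$, one has a relative Kodaira--Spencer morphism
\[
\kappa \colon T_T \longrightarrow \mc{E}xt^1_f(L_{\mc X/T}, \mc O_{\mc X})
\]
with the following two properties: (i) if $f$ is locally trivial on an open subset $\Omega \subset T$, then $\kappa|_\Omega = 0$, since for a product family the relative Kodaira--Spencer class vanishes; and (ii) if $\kappa$ vanishes as a sheaf morphism on an open $\Omega \subset T$, then, thanks to the smoothness of $T$, the horizontal relative vector fields on $f^{-1}(\Omega)$ can be integrated to yield local trivializations, so $f$ is locally trivial on $\Omega$. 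Granted these properties, set $\mc G := \mathrm{image}(\kappa)$, a coherent subsheaf of the target, and $Z := \Supp(\mc G) \subset T$, which is Zariski closed. By (i) applied to the analytic open $W'$ of the first step, $\mc G$ vanishes on $W'$, so $Z \subset T \setminus W'$ is a proper Zariski closed subset avoiding $t$. The Zariski open complement $U := T \setminus Z$ is then a Zariski open neighbourhood of $t$ on which $\kappa = 0$, and by (ii) the family $f$ is locally trivial over $U$.

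The main obstacle I expect is the sheaf-theoretic machinery in the second step: one must ensure that the target of $\kappa$ is coherent, that $\kappa$ itself makes sense globally, and that the integration argument in (ii) can be carried out in the presence of singular fibres. For the families relevant to the present paper, whose fibres are projective symplectic varieties (in our application with $\mathbb{Q}$-factorial terminal singularities), these ingredients are available from the deformation theory of symplectic varieties; compare \cite{Bakker-Lehn-global-moduli} in the K\"ahler setting. Modulo this input, the remainder of the argument is formal: a coherent support that is contained in an analytic complement of $t$ must already be a proper Zariski closed subset, which is exactly the passage from analytic to Zariski openness.
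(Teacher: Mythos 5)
There is a genuine gap in your second step, and it is exactly at the point where the notion of local triviality enters. You take the obstruction to be the full relative Kodaira--Spencer map $\kappa\colon T_T \to \Ext^1(\Omega_{\mc X/T},\mc O_{\mc X})$ and assert (i) that local triviality on $\Omega$ forces $\kappa|_\Omega=0$ "since for a product family the relative Kodaira--Spencer class vanishes," and (ii) that vanishing of $\kappa$ lets you integrate horizontal vector fields to trivializations. Both statements characterize the family being \emph{locally a product over $T$} (local isotriviality), which is much stronger than local triviality in the sense of Definition \ref{local triviality}'s setup: there, a family is locally trivial when every germ $(\mc X,x)\to(T,t)$ at a point of the \emph{total space} is a product. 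A non-isotrivial family of smooth projective varieties is locally trivial in this sense but has nonvanishing Kodaira--Spencer map; more to the point, the locally trivial versal family over $\Def(M)_{lt}$ for a singular symplectic variety is positive-dimensional and non-isotrivial, so your property (i) fails for precisely the families this lemma is applied to. Consequently $\Supp(\im\kappa)$ can be all of $T$ and your Zariski open set $U$ can be empty.

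The fix, which is what the paper's proof does, is to replace $\kappa$ by its composite with the projection to the cokernel of the inclusion $H^1(T_{\mc X/T})\hookrightarrow \Ext^1(\Omega_{\mc X/T},\mc O_{\mc X})$ coming from the local-to-global spectral sequence (equivalently, the induced map to $f_*\Shext^1(\Omega_{\mc X/T},\mc O_{\mc X})$): local triviality is the condition that the Kodaira--Spencer map \emph{lands in the subsheaf} $H^1(T_{\mc X/T})$ of locally trivial first-order deformations, to every order. Since local triviality at $t$ gives this to every order at $t$, the image of the quotiented obstruction morphism has zero stalk at $t$, its support is a proper closed analytic subset missing $t$, and on the complementary Zariski open set the condition holds to all orders at every point. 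With that substitution your support-of-a-coherent-sheaf mechanism is exactly the paper's argument; your first step (analytic openness via compactness of the fibre) then becomes unnecessary, and no appeal to the symplectic structure of the fibres is needed, as the lemma holds for arbitrary families of proper complex spaces over smooth $T$.
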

\begin{proof}
The locus where $f$ is locally trivial is the locus where the natural map $T_T \to \Ext^1(\Omega_{\mc X/T},\mc O_{\mc X})$ lands in $H^1(T_{\mc X/T})$. Since  at $t \in T$ this happens to every order, the same holds on a Zariski open neighborhood.
\end{proof}

As a consequence of Thom's first isotopy Lemma, two proper complex spaces that are deformation equivalent under a locally trivial deformation are homeomorphic. The following proposition is known to the experts (for example, see \cite[Prop. 5.1]{Amerik-Verbitsky} for a slightly different proof).

\begin{prop} \label{local triviality} \label{simres}
Let $f: \mc X \to T$ be a locally trivial family of reduced complex spaces. Suppose $T$ is smooth and connected. Then
\begin{enumerate}
\item $f$ is topologically a fiber bundle, i.e. for every $t \in T$ there is an open (in the classical topology) neighborhood $V \subset T$ and a homeomorphism $f^{-1}(V)  \stackrel{\sim}{\to} \mc X_t \times V$ compatible with the two natural morphisms to $V$. Moreover, this homeomorphism is compatible with the stratification of $f^{-1}(V)$ given by the smooth locus, the smooth locus of the singular locus etc.
\item There exists a simultaneous resolution of singularities, i.e. a proper bimeromorphic morphism $\pi: \wt {\mc X} \to \mc X$ over $T$ such that $\wt {\mc X} \to T$ is smooth and such that $\pi$ is an isomorphism over the smooth locus of $f$. Moreover, the resolution is locally trivial, i.e., it is obtained by successive blow ups at centers which are smooth over $T$ and which locally are products.
\end{enumerate}

\end{prop}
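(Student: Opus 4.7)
The plan is to combine an Ehresmann-style integration of horizontal vector fields for part (1) with the functoriality of canonical resolution of singularities for part (2). Both parts reduce to globalizing, on a neighborhood of a fixed fiber $\mc X_{t_0}$, the product decompositions guaranteed by the local triviality hypothesis.

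For (1), fix $t_0 \in T$ and for each $x \in \mc X_{t_0}$ choose, by local triviality, an open neighborhood $U_x \subset \mc X$ together with an isomorphism $U_x \cong (U_x \cap \mc X_{t_0}) \times W_x$ over a neighborhood $W_x$ of $t_0$, compatible with $f$. Since $f$ is proper, $\mc X_{t_0}$ is compact, so finitely many $U_{x_1}, \dots, U_{x_k}$ cover $\mc X_{t_0}$; after shrinking $T$ to a common $W \subset \bigcap_i W_{x_i}$ we obtain, for any tangent vector field $\xi$ on $T$ defined near $t_0$, a family of local horizontal lifts of $\xi$ on the $U_{x_i}$. Using a continuous partition of unity on $\mc X$ subordinate to the $U_{x_i}$ we patch these lifts into a global continuous horizontal distribution on $f^{-1}(W)$. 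Integrating it (Ehresmann's theorem in the singular setting, equivalently Thom's first isotopy lemma applied to the natural Whitney stratification by depth of singularity) yields a fiber-preserving homeomorphism $f^{-1}(W) \stackrel{\sim}{\to} \mc X_{t_0} \times W$. Because each local product decomposition automatically preserves the intrinsic stratification (smooth locus of $f$, smooth locus of the singular locus, and so on), the global homeomorphism respects this stratification as well.

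For (2), I would invoke the canonical (functorial) resolution of singularities in the complex-analytic category, as developed by Bierstone--Milman and Wlodarczyk; see also Kollar's account. This resolution is obtained by a finite sequence of blowups of smooth centers and is functorial with respect to smooth morphisms: if $p: Y \to Y'$ is smooth, then the canonical resolution of $Y$ is obtained by pulling back along $p$ the canonical resolution of $Y'$. Applied to each local product chart $\mc X|_U \cong Z \times W$ (with the smooth projection to $Z$), this says that the canonical resolution of $\mc X|_U$ is the pullback of the canonical resolution of $Z$. It follows that in each chart the centers of the successive blowups are of the form (center for $Z$) $\times W$, hence smooth over $T$ and locally trivial over $T$; the resulting resolution $\wt{\mc X} \to T$ is therefore locally trivial and, in particular, smooth. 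That $\pi: \wt{\mc X} \to \mc X$ is an isomorphism over the smooth locus of $f$ is automatic, since the canonical resolution is an isomorphism over the smooth locus of each fiber.

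The main obstacle is ensuring the availability and functoriality of canonical resolution in the complex-analytic setting, where one cannot simply quote the algebraic statement; this has however been established in the references above and can be used as a black box. A secondary delicacy is the patching in (1): one must check that the partition of unity produces a continuous horizontal field whose flow is defined on all of $f^{-1}(W)$ and respects the stratification. The first point follows from properness of $f$, which makes the flow time uniform over $\mc X_{t_0}$; the second is automatic, since local product trivializations are stratum-preserving by construction.
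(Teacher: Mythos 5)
Your overall route coincides with the paper's: part (1) ultimately rests on Thom--Verdier isotopy applied to a Whitney stratification that is locally a product along $f$, and part (2) is the functorial-resolution argument, which is precisely the content of the lemma of Bakker--Lehn that the paper cites for this step (canonical resolution commutes with the smooth projections $Z\times W\to Z$ of the product charts, so the centers are products, glue by canonicity, and stay smooth over $T$). Part (2) as you wrote it is sound, modulo taking functorial resolution in the analytic category as a black box, which is legitimate.

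In part (1), however, one step is false as stated and needs the fix the paper actually makes. You propose to apply the isotopy lemma to ``the natural Whitney stratification by depth of singularity,'' i.e.\ the filtration by iterated singular loci. That filtration is \emph{not} in general a Whitney stratification (condition (b) can fail along it), so it cannot be fed directly to Thom's first isotopy lemma. The paper instead uses the canonical Whitney stratification of L\^e--Teissier: being intrinsic, it is determined in each product chart by the corresponding stratification of the fiber, hence is locally a product; therefore each stratum is smooth over $T$ and Verdier's isotopy theorem applies. Compatibility of the resulting homeomorphism with the coarser filtration by smooth loci of iterated singular loci then follows because that filtration, being intrinsic as well, is a union of canonical strata. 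Relatedly, the partition-of-unity patching of horizontal lifts does not by itself yield a flow whose time-one map is a continuous, stratification-preserving homeomorphism of a singular space; one needs controlled (rugose) vector fields, which is exactly what the isotopy lemma packages. Since you fall back on that lemma anyway, the patching paragraph can be dropped; what must be supplied in its place is the correct Whitney stratification with strata submersive over $T$, as above.
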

\begin{proof}
(1) The assumption on $f$ implies that there is a Whitney stratification \cite[\S 1.2]{Goresky-Macpherson-stratified} $\{{X_\alpha}\}_{\alpha \in A}$ of $X$ that is compatible with $f$ in the sense that for all $\alpha \in A$ the induced morphism $X_\alpha \to T$ is smooth. Indeed, since the family of locally trivial, the canonical Whitney stratification of Le-Teissier (see \cite[pg. 70 and Cor 1.3.3]{Le-Teissier}) is induced locally by the canonical stratification of a given fiber, so it is locally a product. Since the strata are locally a product, the induced morphism on every stratum is smooth. 
By \cite[(4.14)]{Verdier}, this implies that for all $t \in T$, there is an open neighborhood $V$ of $t \in T$ and a homeomorphism $f^{-1}(V)  \to \mc X_t \times V$ compatible with the natural morphisms to $V$ and with the stratifications. 
(2) This is \cite[Lem. 4.9]{Bakker-Lehn-global-moduli} and its proof.
\end{proof}

For a projective variety $Z$, denote by $\Def(Z)$  the versal deformation space of the deformation functor of $Z$ and by $\Def(Z)_{lt}$ the versal deformation space of the deformation functor parametrizing locally trivial deformations of $Z$. These exists as germ of a complex spaces (or as the spectra of complete local algebras) and there is a natural closed embedding $\Def(Z)_{lt} \subset \Def(Z)$. There is a versal deformation $\mc Z \to \Def(Z)$ whose restriction to $\Def(Z)_{lt}$ is a locally trivial deformation of $Z$. The following is the Main Theorem of  \cite{Namikawa-Qfact}.


\begin{thm}[Namikawa] \label{thmnamikawa}
Let $M$ be a  $\Q$-factorial terminal projective symplectic variety. Then the closed embedding
\[
\Def(M)_{lt} \subseteq \Def(M)
\]
is an isomorphism and  both spaces are smooth of the expected dimension.  The restriction of the versal family over $\Def(M)$ is a locally trivial deformation of $M$ (and the versal family for $\Def(M)_{lt}$).
\end{thm}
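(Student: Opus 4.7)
The plan is to establish smoothness of both $\Def(M)$ and $\Def(M)_{lt}$ and then to show that the closed embedding between them is an equality by matching tangent spaces. First I would fix notation for the two deformation theories: the tangent space to $\Def(M)$ is $T^1(M) := \Ext^1(\Omega_M^1, \mc O_M)$ with obstructions in $T^2(M) := \Ext^2(\Omega_M^1, \mc O_M)$, while the tangent space to $\Def(M)_{lt}$ is $H^1(M, \mc T_M)$ with obstructions in $H^2(M, \mc T_M)$, where $\mc T_M = \Shhom(\Omega_M^1, \mc O_M)$ is the reflexive tangent sheaf. The local-to-global spectral sequence gives a canonical inclusion $H^1(M, \mc T_M) \hookrightarrow T^1(M)$ which realizes $\Def(M)_{lt}$ as a closed subfunctor of $\Def(M)$.

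For the smoothness of $\Def(M)$, I would adapt the Bogomolov--Tian--Todorov unobstructedness theorem to this singular setting, following Kaledin--Verbitsky and Namikawa. The crucial input is the reflexive holomorphic symplectic form $\sigma$ on $M$, which induces an isomorphism $\mc T_M \cong \Omega_M^{[1]}$ on $M^{sm}$. Using rationality of the singularities together with the codimension bound $\codim_M \Sing(M) \ge 4$ coming from terminality (Lemma \ref{lemterminal4}), one translates the relevant Ext groups into Hodge-theoretic data on a resolution $\wt M \to M$ and runs a $T^1$-lifting argument: the obstruction to extending an $n$-th order deformation to $(n+1)$-st order vanishes because, once paired with $\sigma$, it becomes a class killed by the degeneration of the Hodge-to-de Rham spectral sequence. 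A parallel but easier argument, working directly with the reflexive sheaf $\mc T_M$, then yields the smoothness of $\Def(M)_{lt}$.

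The main obstacle, and in my view the heart of the theorem, is the equality $\Def(M)_{lt} = \Def(M)$. Given the smoothness of both sides, it is enough to check equality on tangent spaces, and from the low-degree exact sequence
\[
0 \to H^1(M, \mc T_M) \to T^1(M) \to H^0(M, \Shext^1(\Omega_M^1, \mc O_M))
\]
this reduces to the vanishing $H^0(M, \Shext^1(\Omega_M^1, \mc O_M)) = 0$. This is a local statement at each singular point and amounts to the rigidity of $\Q$-factorial terminal symplectic germs as analytic singularities. I would prove it by combining Namikawa's structure theorems for symplectic singularities with the codimension bound from terminality and with the strong consequences of global $\Q$-factoriality at the singular stratum: along the generic point of each codimension-two component of $\Sing(M)$ the transverse slice is a surface Du Val singularity, and its potential first-order deformations are forbidden by the global $\Q$-factoriality hypothesis, while in higher codimension the codimension bound from terminality rules out non-trivial local deformations.

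Once this equality is in hand, the remaining assertions are essentially formal. The versal family $\mc M \to \Def(M)$ restricted to $\Def(M)_{lt} = \Def(M)$ is by construction locally trivial, and it is versal for both deformation functors. The expected dimension equals $\dim H^1(M, \mc T_M)$, which under the symplectic isomorphism is $\dim H^1(M, \Omega_M^{[1]})$, matching the Beauville--Bogomolov count in the smooth setting. Throughout, the non-trivial ingredients are the two unobstructedness statements and the local rigidity of the singularities; the rigidity is where I expect essentially all of the difficulty to lie.
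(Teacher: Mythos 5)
First, a point of comparison: the paper does not prove this statement at all --- it is quoted verbatim as the Main Theorem of \cite{Namikawa-Qfact}, so the only ``proof'' in the paper is the citation. Your outer architecture (unobstructedness of $\Def(M)$ via a singular Bogomolov--Tian--Todorov/$T^1$-lifting argument, unobstructedness of $\Def(M)_{lt}$, and then equality via tangent spaces) is broadly consistent with how Namikawa actually argues, and the first two steps are fine as a sketch.

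The genuine gap is in the step you correctly identify as the heart of the matter. From the low-degree exact sequence
\[
0 \to H^1(M, \mc T_M) \to \Ext^1(\Omega_M^1, \mc O_M) \to H^0(M, \Shext^1(\Omega_M^1, \mc O_M)),
\]
what must be shown is that the \emph{composite map} $\Ext^1(\Omega_M^1,\mc O_M) \to H^0(M,\Shext^1(\Omega_M^1,\mc O_M))$ vanishes, not that $H^0(M,\Shext^1(\Omega_M^1,\mc O_M))=0$; the latter is strictly stronger and is false precisely for the moduli spaces this paper is about. Indeed, the transverse slices along the singular strata of $M_v(\sigma)$ are quiver-variety/nilpotent-orbit--type symplectic singularities (e.g.\ cones of rank-one traceless matrices), which admit nontrivial local smoothings, so $\Shext^1(\Omega_M^1,\mc O_M)$ is a nonzero sheaf supported on $\Sing(M)$ and there is no reason for its global sections to vanish. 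Your proposed justification also fails on two counts: $\Q$-factoriality is a Zariski-global, not an analytic-local, property, so ``rigidity of $\Q$-factorial terminal symplectic germs'' is not a well-posed local statement, and terminality does \emph{not} rule out nontrivial local deformations in codimension $\ge 4$; meanwhile the discussion of codimension-two Du Val loci is vacuous here, since by Lemma \ref{lemterminal4} terminality forces $\codim \Sing(M)\ge 4$. The actual mechanism in \cite{Namikawa-Qfact} is genuinely global: $\Q$-factoriality enters through the Koll\'ar--Mori criterion (the image of $H^2$ of a resolution in $H^0(R^2\pi_*\Q)$ is spanned by exceptional divisors, cf.\ (\ref{KM}) in this paper), which is used to compare the deformations of $M$ with those of its smooth locus and of a resolution and to conclude that global deformation classes map to zero in $H^0(\Shext^1)$ even though that sheaf is nonzero. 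Without an argument of this global nature, the proposal does not establish $\Def(M)_{lt}=\Def(M)$.
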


\begin{lemma} \label{lemsmalldef}
Let $M$ be a projective $\Q$-factorial terminal symplectic variety and let $\mc M \to T$ be a deformation of $M$, with $\mc M_0=M$ for some $0 \in T$ and $T$ smooth.  Then the locus where $\mc M_t$ is not $\Q$-factorial terminal symplectic is a Zariski closed subset of $T$.

\end{lemma}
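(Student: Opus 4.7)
The plan is to show that the locus $T^\circ \subseteq T$ where $\mc M_t$ is $\Q$-factorial terminal symplectic is Zariski open. Since this is a local question on $T$, it suffices to produce, for each $t_1 \in T^\circ$, a Zariski open neighborhood of $t_1$ contained in $T^\circ$.

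Fix such a $t_1$. Since $\mc M_{t_1}$ is projective $\Q$-factorial terminal symplectic, Namikawa's Theorem \ref{thmnamikawa} gives $\Def(\mc M_{t_1})=\Def(\mc M_{t_1})_{lt}$ with versal deformation locally trivial, so the germ of $\mc M \to T$ at $t_1$ is locally trivial, and by Lemma \ref{lem loctrivZariski} local triviality extends to a Zariski open neighborhood $U \subseteq T$ of $t_1$. I would then show that every fiber over $U$ is $\Q$-factorial terminal symplectic, which implies $U \subseteq T^\circ$.

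On $U$, Proposition \ref{local triviality}(1) exhibits $\mc M|_U \to U$ as a stratified topological fiber bundle, so $\codim_{\mc M_t}\Sing(\mc M_t)$ is constant and equal to $\codim_{\mc M_{t_1}}\Sing(\mc M_{t_1}) \ge 4$. Lemma \ref{lemterminal4} then gives terminality of each $\mc M_t$. Part (2) of the same proposition provides a simultaneous resolution $\tilde{\mc M} \to \mc M|_U$ smooth and proper over $U$; since $h^{2,0}$ is constant in a smooth proper family, the symplectic form on $\tilde{\mc M}_{t_1}$ deforms to a relative $2$-form which, after shrinking $U$ in the Zariski topology to avoid the closed locus where its top wedge power vanishes, is fiberwise symplectic, making each $\mc M_t$ a symplectic variety.

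The main obstacle is $\Q$-factoriality, which is a priori a closed (Noether--Lefschetz type) condition on the Hodge structure of $H^2$. The crucial input here is that the stratified homeomorphism of Proposition \ref{local triviality}(1) identifies the analytic germ of $\mc M_t$ at each of its singular points with a corresponding analytic germ of $\mc M_{t_1}$, so the local analytic class groups are preserved along $U$, and analytic local $\Q$-factoriality of $\mc M_{t_1}$ propagates to every $\mc M_t$. I would combine this with the Koll\'ar--Mori criterion \cite[12.1.6]{Kollar-Mori-flips}, which for projective terminal symplectic varieties reduces global $\Q$-factoriality to a comparison between the Weil divisor class group and the Picard group controlled by the local analytic class groups, together with the framework of Bakker--Lehn \cite{Bakker-Lehn-global-moduli} for locally trivial deformations of symplectic varieties, to conclude that each $\mc M_t$, $t \in U$, is $\Q$-factorial. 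This shows $U \subseteq T^\circ$ and completes the argument.
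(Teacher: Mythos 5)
Your overall architecture matches the paper's: restrict to a Zariski-open neighborhood of $t_1$ over which the family is locally trivial (Theorem \ref{thmnamikawa} plus Lemma \ref{lem loctrivZariski}), take a locally trivial simultaneous resolution, deduce terminality from the constancy of $\codim\Sing(\mc M_t)$ together with Lemma \ref{lemterminal4}, and deduce symplecticity by extending the $2$-form over the resolution and shrinking $T$ so it stays non-degenerate. The gap is in the $\Q$-factoriality step, which you correctly identify as the crux but resolve with the wrong mechanism. First, the stratified homeomorphism of Proposition \ref{local triviality}(1) is purely topological and does not identify analytic germs; what controls germs is local triviality itself, and even that identifies $(\mc M,x)$ with $(\mc M_t,x)\times(T,t)$ rather than germs of distinct fibers with one another. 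Second, and more seriously, there is nothing to ``propagate'': global $\Q$-factoriality of a projective variety does \emph{not} imply analytic local $\Q$-factoriality (the latter is strictly stronger), so $\mc M_{t_1}$ need not have any local property whose constancy would help; and conversely, constancy of local analytic class groups says nothing about which local Weil divisor classes are globally realized --- which is exactly the Noether--Lefschetz-type obstacle you flag at the outset and then never actually dissolve.

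The paper dissolves it as follows. By \cite[(12.1.10)]{Kollar-Mori-flips}, $\Q$-factoriality of the projective fiber $\mc M_{t_1}$ (which has rational singularities) is equivalent to the equality of the images of $H^2(\mc Y_{t_1},\Q)$ and of $\sum\Q[E_i]$ inside $H^0(\mc M_{t_1}, R^2\pi_*\Q)$, where the $E_i$ are the exceptional divisors of the resolution. This is an equality of purely topological data attached to the locally trivial simultaneous resolution, hence it holds for every $t\in U$ once it holds at $t_1$. For each $t$, the image of $\Pic(\mc Y_t)_\Q$ in $H^0(R^2\pi_*\Q)$ is sandwiched between the image of $\sum\Q[E_{i,t}]$ (the exceptional divisors are themselves line bundles) and the image of all of $H^2(\mc Y_t,\Q)$; since these two images coincide, the criterion of \cite[Prop. 2.15]{Bakker-Lehn-global-moduli} applies and $\mc M_t$ is $\Q$-factorial. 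This sidesteps the jumping of $\Pic(\mc Y_t)$ entirely. You would need to replace your ``local analytic class groups'' argument with this global topological one for the proof to go through.
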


Note that in this Lemma we are surreptitiously using the notion of $\Q$-factoriality in the non algebraic setting. Since ultimately we are only concerned with projective varieties we will not enter in discussions on this subtle notion and instead refer the reader to  \cite[\S 2.12]{Bakker-Lehn-global-moduli}.

\begin{proof} Up to doing a base change to a resolution of $T$, we can assume that $T$ is smooth.  By Lemma \ref{lem loctrivZariski}, up to passing to a Zariski open subset we can assume that $\mc M \to T$ is a locally trivial deformation of $M$.
Then by \cite[Lemma 4.9]{Bakker-Lehn-global-moduli} there exists a simultaneous resolution of $\mc M$, i.e. a proper bimeromorphic morphism $\pi: \mc Y \to \mc M$ which is an isomorphism over the smooth locus of $\mc M \to T$. 
%
Since the sheaf $(f \circ \pi)_* \Omega^2_{{\mc Y}/T}$ is locally free, up to further restricting $T$ to  Zariski open neighborhood of $0 \in T$, we can find a local section extending $\sigma_0$. This defines a section $\sigma \in \Gamma(\mc M^{reg},  \Omega^2_{\mc M}/T)$, and since $\sigma_0$ is non-degenerate, up to further restricting $T$, we can assume that $\sigma$ is non-degenerate on $\mc M^{reg}_t$, for every $t \in T$.

Since $M=\mc M_0$ is projective, $\Q$-factorial and with rational singularities, by \cite[(12.1.10)]{Kollar-Mori-flips}
\be \label{KM}
\im[H^2(\mc Y_0,\mathbb Q) \to H^0(M, R^2 \pi_* \Q_{\mc Y_0})]=\im[\sum \Q [E_i] \to H^0(M, R^2 \pi_* \Q_{\mc Y_0}) ]
\ee
holds. Here the $E_i$ are the components of the exceptional locus of the resolution $\mc Y_0 \to M_0$. Since $\mc Y \to \mc M$ is a locally trivial resolution,  (\ref{KM}) holds for any $t \in T$. In particular, restricting the left handside of (\ref{KM}) to $\Pic(\mc Y_t)$ gives that
\[
\im[\Pic(\mc Y_t)_\mathbb Q \to H^0(M, R^2 \pi_* \Q_{\mc Y_t})]=\im[\sum \Q [E_{i,t}] \to H^0(M, R^2 \pi_* \Q_{\mc Y_0})]
\]
holds for every $t \in T$. By  \cite[Prop. 2.15]{Bakker-Lehn-global-moduli}, $\mc M_t$ is $\Q$-factorial for every $t \in T$. (Note that since having rational singularities is an analytic condition and $\mc M \to T$ is locally trivial, $\mc M_t$ has rational singularities for every $t \in T$.)
\end{proof}

Next we show that $\Q$-factorial terminal symplectic projective varieties are deformation equivalent, generalizing the well known result of Huybrechts for  irreducible holomorphic symplectic manifolds (note that Huybrechts' result does not require the projectivity assumption). For a different proof of the following result, see \cite[Thm 6.16]{Bakker-Lehn-global-moduli}.
\begin{prop} \label{propbirdef}
Let $\phi: M \dashrightarrow M'$ be a birational map of $\Q$-factorial terminal symplectic projective varieties. Then $M$ and $M'$ are deformation equivalent under a locally trivial deformation. In particular, they are homeomorphic.
\end{prop}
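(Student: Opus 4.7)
The plan is to adapt Huybrechts' classical deformation-equivalence argument for birational irreducible holomorphic symplectic manifolds to the present singular setting, leveraging Namikawa's theorem (Theorem \ref{thmnamikawa}) on the smoothness of the locally trivial deformation space together with the singular local Torelli package of Bakker--Lehn \cite{Bakker-Lehn-global-moduli}.

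First I would observe that any birational map $\phi$ between $\Q$-factorial terminal symplectic projective varieties is an isomorphism in codimension $1$. Indeed, a divisor contracted by $\phi$ (respectively by $\phi^{-1}$) would have to be uniruled and hence carry negative discrepancy, which is incompatible with $K$-triviality combined with terminality on a $\Q$-factorial symplectic variety: divisorial extremal contractions of such varieties are excluded by the negativity lemma. Consequently $\phi$ restricts to an isomorphism between open subsets whose complements have codimension $\geq 2$, which on reflexive cohomology yields a Hodge isometry $H^2(M,\Z) \cong H^2(M',\Z)$ compatible with the respective Beauville--Bogomolov--Fujiki forms.

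Next, by Theorem \ref{thmnamikawa} both $\Def(M) = \Def(M)_{lt}$ and $\Def(M') = \Def(M')_{lt}$ are smooth of the same dimension, and singular local Torelli in the locally trivial setting (see \cite{Bakker-Lehn-global-moduli}) promotes the period identification above to an isomorphism $\Def(M)_{lt} \cong \Def(M')_{lt}$ under which the two Kuranishi families deform simultaneously. Over a very general point $t$ of this common deformation base, the Picard rank of the fiber drops so that $\mc M_t$ admits no nontrivial small birational modifications; in particular the birational map $\phi_t \colon \mc M_t \dashrightarrow \mc M'_t$ between the general fibers is forced to extend to a biregular isomorphism.

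To conclude I would choose a smooth one-parameter locally trivial deformation from $M$ to such a very general fiber $\mc M_t$ inside $\Def(M)_{lt}$, an analogous one from $M'$ to $\mc M'_t$, and concatenate them via the biregular isomorphism $\mc M_t \cong \mc M'_t$. This produces a single locally trivial family over a connected chain of disks connecting $M$ and $M'$, proving that they are deformation equivalent under locally trivial deformation; the homeomorphism statement then follows from Proposition \ref{local triviality}. The main obstacle will be justifying the extension of $\phi_t$ to a biregular isomorphism on the very general fiber: this amounts to controlling the codimension $\geq 2$ exceptional loci of $\phi$ in the locally trivial family, and ultimately rests on the Koll\'ar--Mori characterization of $\Q$-factoriality in families (\ref{KM}) used in Lemma \ref{lemsmalldef}, together with the vanishing of the extra $(1,1)$-classes that could sustain further flops on the general fiber.
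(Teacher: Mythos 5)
Your approach is genuinely different from the paper's. The paper does not deform the birational map at all: it invokes Kawamata's result \cite[Thm 1]{Kawamata-flops} that $\phi$ factors as a finite sequence of flops of $\Q$-factorial terminal symplectic varieties, and then, for a single flop $Y^+\dashrightarrow Y^-$ with flopping contractions $\pi^\pm\colon Y^\pm\to Y_0$, applies Theorem 1 of \cite{Namikawa-Qfact}, which produces finite surjective maps $\Def(Y^\pm)_{lt}\to\Def(Y_0)$ compatible with the versal families and identifying a general locally trivial deformation of $Y^+$ with a general deformation of $Y_0$, hence with a general deformation of $Y^-$. This bypasses entirely the question of whether $\phi$ itself deforms. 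Your route is instead the Huybrechts-style argument via local Torelli, which is essentially the alternative proof the paper points to in \cite[Thm 6.16]{Bakker-Lehn-global-moduli}; it buys a global, period-theoretic picture, at the cost of needing the full strength of the singular Torelli/moduli machinery rather than the two self-contained inputs (Kawamata, Namikawa) the paper uses.

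As written, however, your key step has a genuine gap. You assert that over a very general point $t$ of the common base the map ``$\phi_t\colon \mc M_t\dashrightarrow \mc M'_t$'' is forced to be biregular because the Picard rank drops. This presupposes two things you have not established: first, that $\phi$ deforms sideways to a birational map between the general fibers at all (one must deform the closure of the graph, or the strict transform of a polarization, and control the limit cycle -- this is where Huybrechts' original argument does its real work); and second, that an isomorphism in codimension one between the very general fibers is automatically biregular, which requires identifying the nef and movable cones for very general Picard rank, or a separatedness/Torelli statement for singular symplectic varieties. Both points are exactly the content of the Bakker--Lehn theory you gesture at, so the argument is circular unless you import their Theorem 6.16 wholesale -- at which point there is nothing left to prove. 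Also, your preliminary step is more involved than necessary: that $\phi$ is an isomorphism in codimension one follows simply because $M$ and $M'$ are both minimal models (terminal with numerically trivial canonical class), with no need for the uniruledness/discrepancy discussion. If you want a complete proof along independent lines, the paper's reduction to flops via \cite{Kawamata-flops} followed by \cite[Thm 1]{Namikawa-Qfact} is the shorter path.
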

\begin{proof}
By Kawamata \cite[Thm 1]{Kawamata-flops}, the birational map $\phi$ factors as a finite sequence of flops. Note that since a flop of a  $\Q$-factorial terminal symplectic variety is still a $\Q$-factorial terminal symplectic variety, it is enough to prove the result for a flop. Let $\psi: Y^+ \dashrightarrow Y^-$ be a flop of $\Q$-factorial terminal symplectic projective varieties and let $\pi^\pm: Y^\pm \to Y_0$ be the associated flopping contractions. By Theorem 1 of \cite{Namikawa-Qfact}, there are finite surjective morphisms $\rho^\pm: \Def(Y^\pm)_{lt} \to \Def(Y_0)$ which are compatible with the versal families and induce an isomorphism between a general (locally trivial) deformation of $Y^\pm$ and a general deformation of $Y_0$. The proposition follows.
\end{proof}

\begin{rem}
Note that the proof above gives slightly more, namely that there are two locally trivial deformations $\mc M, \mc M' \to T$ of $M$ and $M'$ over a pointed disk $(T,0)$ and a birational map over $T$ which is an isomorphism away from the central fibers and which induces $\phi$ over $0$. The case when $M$ (and thus $M'$) admits a symplectic resolution was already proved in \cite[Thm 1.1]{Lehn-Pacienza}.

\end{rem}

The following Lemma establishes that if two $\Q$-factorial terminal symplectic projective varieties are deformation equivalent, then they are deformation equivalent with a locally trivial deformation.

\begin{lemma} \label{deformationquali} Let $f: \mc M \to T$ be a proper flat morphism of complex spaces, with $T$ connected. Let $t_1, t_2  \in T$ be points with $\mc M_{t_1}$ and  $\mc M_{t_2}$ $\Q$-factorial terminal symplectic projective varieties.  Then $\mc M_{t_1}$ and  $\mc M_{t_2}$ are deformation equivalent under a locally trivial deformation. In particular, they are homeomorphic.
\end{lemma}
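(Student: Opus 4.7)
The plan is to reduce to the case where $T$ is a smooth connected complex curve, apply the Zariski openness of the good locus together with Namikawa's theorem, and exploit the classical connectedness of the complement of finitely many points in a Riemann surface.

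Since deformation equivalence under locally trivial families is an equivalence relation, it suffices to connect $t_1$ and $t_2$ through a chain of such families. First I would reduce to the case where $T$ is a smooth connected complex curve containing $t_1, t_2$: using the connectedness of $T$, take a chain of irreducible components of $T$ linking $t_1$ to $t_2$, pass to resolutions of these components, and within each component choose a smooth irreducible analytic curve joining the appropriate pair of marked points. Pulling $\mc{M}$ back to each such curve (which preserves the fibers above the marked points) reduces the problem to a single smooth connected complex curve.

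Next, Lemma~\ref{lemsmalldef} applies: the locus $U \subseteq T$ where $\mc{M}_t$ is $\Q$-factorial terminal symplectic is Zariski open and, by hypothesis, contains both $t_1$ and $t_2$. For each $u \in U$, Theorem~\ref{thmnamikawa} gives $\Def(\mc{M}_u) = \Def(\mc{M}_u)_{lt}$ and identifies the versal family over this space as a locally trivial deformation. The restriction of $\mc{M}$ to a small analytic neighborhood of $u$ is pulled back from the versal family via the classifying map, so it is locally trivial at $u$. Since this holds pointwise on $U$, the family $\mc{M}|_U \to U$ is a locally trivial deformation.

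Finally, $U$ is the complement of finitely many points in a smooth connected Riemann surface and therefore remains connected in the classical topology, so $\mc{M}|_U \to U$ is a locally trivial family over a connected base containing $t_1$ and $t_2$, establishing the desired deformation equivalence. The homeomorphism of $\mc{M}_{t_1}$ and $\mc{M}_{t_2}$ then follows from Proposition~\ref{local triviality}(1), which asserts that locally trivial proper families are topological fiber bundles. The main obstacle is the initial reduction step: some care is required when $T$ is reducible or non-normal to ensure that one can genuinely connect $t_1$ and $t_2$ through a chain of smooth irreducible curves in (resolutions of) $T$ without altering the fibers over the marked points.
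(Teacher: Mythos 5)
Your argument covers only half of the lemma. When $t_1$ and $t_2$ lie in the same connected component of the good locus $U$ (after your reduction, within a single irreducible curve whose generic fiber is good), the combination of Lemma~\ref{lemsmalldef}, Theorem~\ref{thmnamikawa}, and connectedness of the complement of finitely many points works, and this part matches the paper. The gap is in the reduction step, which you flag as a technicality but which is in fact the crux: when $T$ is reducible, your chain of smooth curves must pass through points $z$ in the intersection of two irreducible components, and the fiber $\mc M_z$ over such a point is \emph{not} assumed to be a $\Q$-factorial terminal symplectic projective variety --- the hypothesis only concerns $t_1$ and $t_2$. On the curve inside $T_1$ joining $t_1$ to $z$, Lemma~\ref{lemsmalldef} gives you a Zariski-open good locus containing $t_1$, but $z$ may well lie outside it (e.g.\ $\mc M_z$ could be a non-$\Q$-factorial or non-terminal degeneration), so you have connected $\mc M_{t_1}$ only to the fibers over that open set, not to $\mc M_z$, and the next segment of the chain starts from a fiber to which none of your tools apply. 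If $T_1\cap T_2$ is entirely contained in the bad locus $Z$, your argument provides no way to cross from one component to the other.

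The paper bridges exactly this case: it chooses $z\in T_1\cap T_2\cap Z$, takes a $\Q$-factorial terminalization $\wt M_z\to \mc M_z$ (which exists by BCHM), and invokes Namikawa's theorem that $\Def(\wt M_z)\to\Def(\mc M_z)$ is a finite covering. This lets one lift the two germs of families $(T_i,z)\to\Def(\mc M_z)$ to germs $(\wt T_i,\wt z)\to\Def(\wt M_z)$ after a finite base change; since $\wt M_z$ is $\Q$-factorial terminal symplectic, $\Def(\wt M_z)=\Def(\wt M_z)_{lt}$ is smooth and its versal family is locally trivial, so the nearby good fibers of the two families are connected to each other through $\Def(\wt M_z)$. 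Without some such device for crossing a bad point, your proof does not establish the statement for general connected $T$.
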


\begin{proof} By Lemma \ref{lemsmalldef}  there exists a non empty open Zariski subset $U \subset T$ parametrizing $\Q$-factorial terminal symplectic varieties. Denote by $Z$ the complement of $U$ in $T$. If $t_1$ and $t_2$ belong to the same connected component of $U$, the statement follows from from Proposition \ref{local triviality}. Otherwise, $t_1 $ and $t_2$ lie in different irreducible components $T_1$ and $T_2$ of $T$ which are disconnected by the removal of $Z$. Choose $z \in T_1 \cap T_2 \cap Z$ and let $M_z$ be the corresponding fiber.  For $i=1,2$, there are morphisms of germs $(T_i,z) \to \Def (M_z)$ whose image intersects the locus parametrizing $\Q$-factorial deformation of $M_z$.  Let $\wt M_z \to M_z$ be a $\Q$-factorial terminalization (this exists by \cite[Cor. 1.4.3]{BCHM}). By the already cited \cite[Thm 1]{Namikawa-Qfact}, the natural morphism $\Def(\wt M_z) \to \Def (M_z)$ is a finite covering, so up to passing to a finite cover $(\wt T_i, \wt z)  \to (T_i,z)$ of germs, we get morphisms  $(\wt T_i, \wt z)  \to \Def (\wt M_z)$.
\end{proof}


\begin{prop}  \label{invariance} Let $M$ be an irreducible symplectic variety that is $\Q$-factorial and terminal. Suppose the smooth locus of $M$ is simply connected. Then
\begin{enumerate}
\item Any  deformation of $M$ that is $\Q$-factorial, terminal and symplectic is an irreducible symplectic variety.
\item Any $\Q$-factorial terminal symplectic projective variety that is birational to $M$ is also an irreducible symplectic variety.
\end{enumerate}
\end{prop}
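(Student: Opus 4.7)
The plan is to reduce both claims to the invariance of reflexive forms under locally trivial deformations, exploiting the simply-connectedness hypothesis on $M^{reg}$ to trivialize all quasi-\'etale covers. First, I would observe that if $f: Y \to M$ is a quasi-\'etale cover with $Y$ irreducible, then $f^{-1}(M^{reg}) \to M^{reg}$ is finite \'etale over a simply connected base, hence a disjoint union of copies of $M^{reg}$; irreducibility and normality of $Y$ then force $f$ to be an isomorphism. Thus for any normal variety $N$ whose smooth locus is simply connected, being an irreducible symplectic variety reduces to the single condition that the algebra of reflexive forms on $N$ be generated by a reflexive symplectic form.

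For (1), let $\mc M \to T$ be a deformation of $M$ over a smooth connected base with $\Q$-factorial terminal symplectic fibers. By Theorem \ref{thmnamikawa}, Lemma \ref{lemsmalldef}, and Lemma \ref{lem loctrivZariski}, up to shrinking $T$ to a Zariski open neighborhood of the reference point we may assume the family is locally trivial. Proposition \ref{local triviality}(1) then realizes $\mc M \to T$ as a topological fiber bundle compatible with the smooth stratification, whence $\pi_1((\mc M_t)^{reg}) \cong \pi_1(M^{reg}) = 1$ for every $t \in T$. By the first paragraph, every quasi-\'etale cover of every fiber $\mc M_t$ is then trivial, and it suffices to verify that the algebra of reflexive forms on each $\mc M_t$ is generated by its reflexive symplectic form.

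For this last step I would invoke Proposition \ref{local triviality}(2) to obtain a simultaneous resolution $\pi: \wt{\mc M} \to \mc M$ over $T$ with $\wt{\mc M} \to T$ smooth and proper and $\pi$ an isomorphism over the relative smooth locus. The pushforward then identifies $H^0(\wt{\mc M_t}, \Omega^k_{\wt{\mc M_t}})$ with the space of reflexive $k$-forms on $\mc M_t$. Hodge theory in the smooth proper family $\wt{\mc M} \to T$ gives constancy of $h^0(\wt{\mc M_t}, \Omega^k)$ in $t$; at the reference fiber this dimension equals one for $k$ even and zero for $k$ odd (by irreducible symplecticity of $M$, via the reduction in paragraph one), so the same holds at every $t$. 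After possibly shrinking $T$ further, the reflexive symplectic form $\sigma$ on $M$ extends to a relative reflexive symplectic form, restricting fiberwise to a non-degenerate $\sigma_t$; its powers $\sigma_t^k$ are non-zero by non-degeneracy and hence generate the one-dimensional spaces of reflexive $2k$-forms on $\mc M_t$. This proves (1).

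For (2), Proposition \ref{propbirdef} produces a locally trivial deformation connecting the $\Q$-factorial terminal symplectic projective varieties $M$ and $M'$; by Lemma \ref{lemsmalldef} the locus of $\Q$-factorial terminal symplectic fibers is Zariski open and dense, so restricting over this locus and applying (1) yields that $M'$ is an irreducible symplectic variety. The main technical obstacle throughout lies in the family-theoretic identification of reflexive forms on the (singular) fibers $\mc M_t$ with holomorphic forms on the simultaneous resolution $\wt{\mc M_t}$, which in turn rests on both the locally trivial structure guaranteed by Proposition \ref{local triviality} and the Hodge-theoretic constancy of the $h^{k,0}$ of the smooth resolution across $T$.
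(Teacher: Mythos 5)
Your argument is correct and follows essentially the same route as the paper: transfer simple connectedness of the smooth locus along the topological fiber bundle structure of a locally trivial family to kill all quasi-\'etale covers, then use the simultaneous resolution of Proposition \ref{simres}(2) together with constancy of $h^{p,0}$ in the smooth proper family to control the algebra of reflexive forms, and deduce (2) from Proposition \ref{propbirdef}. The only small point to tighten is the reduction to the locally trivial case: shrinking $T$ to a Zariski neighborhood of the reference point only reaches fibers in that neighborhood, whereas the statement concerns an arbitrary $\Q$-factorial terminal symplectic fiber, so one should instead invoke Lemma \ref{deformationquali} to connect $M$ to such a fiber by a chain of locally trivial deformations and then run your argument on each link.
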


Statement (1) was already used in \cite{Perego-Rapagnetta-Irr}). See also \cite[Cor. 3.10]{Bakker-Guenancia-Lehn} for a more general statement.

\begin{proof} 
(1) By the first item of Proposition \ref{deformationquali} any  deformation $M'$ of $M$ has simply connected singular locus, so any quasi-\'etale cover of $M'$ is an isomorphism and we only need to check the algebra of holomorphic forms of $M'$ itself.
Since the algebra of holomorphic forms is a birational invariant for proper complex space with canonical singularities, it suffices to check that the algebra of holomorphic forms on a resolution of $M'$ is generated by the (reflexive) pullback of the holomorphic symplectic form. The statement now follows from the existence of simultaneous resolutions (Proposition \ref{simres} (2)), and the fact that the $h^{p,0}$ are locally constant in a flat family of K\"ahler manifolds.
(2) This follows Proposition \ref{propbirdef} and part (1).
\end{proof}

\section{Moduli spaces as Irreducible symplectic varieties} \label{modspacesISV}
Let $\mc D$ be the derived category of a (twisted) K3 or the Kuznestov component of a smooth cubic fourfold.
For the rest of this section, fix an $m \ge 2$ and a $k>0$, not both equal to $2$.
 Let $v=mv_0 \in \wt K(\mc D)$ be a Mukai vector with $v_0^2=2k$ and let  $\sigma$ be a $v$-generic stability condition in $\Stab^\dagger(\mc D)$.
The main result of this section is Theorem \ref{thmisv}, namely that the projective moduli space $M_v(\mc D, \sigma)$ is an irreducible symplectic variety and that its (locally trivial) deformation class is uniquely determined by $m$ and $k$. This extends the results of main result of \cite{Perego-Rapagnetta-Irr} where this is proved that all Gieseker moduli spaces with  given $m$ and $k$ are deformation equivalent under a locally trivial deformation. We prove this by showing that we can reduce to the case when $\mc D$ is the derived category of a K3 surface, then we give a streamlined account the classical argument of O'Grady and Yoshioka \cite{OGradyWt2,Yoshioka-moduli-abelian} (see also  \cite{Perego-Rapagnetta-Irr}), to show that  the (locally trivial) deformation class of moduli spaces of Bridgeland stable objects on K3 surfaces with class $v$ as above, depends only on $m$ and $k$. 

\begin{thm} \label{thmisv} Let $\mc D$ be as above. Let $v=mv_0 \in K_{num}(\mc D)$ be a non-primitive Mukai vector not of OG10-type and let $\sigma$ be a $v$-generic stability condition in $\Stab^\dagger (\mc D)$. The the moduli space  $M_v(\mc D, \sigma)$ is an irreducible symplectic variety. Moreover, if $\mc D'$ is another triangulated category which also is the derived category of  a (twisted) K3 or the Kuznestov component of a smooth cubic fourfold,  $v'=m'v_0' \in K_{num}(\mc D')$ is a non-primitive Mukai vector not of OG10-type and $\sigma' \in \Stab^\dagger (\mc D')$ is a $v'$-generic stability condition, then $M_v(\mc D, \sigma)$ is deformation equivalent to $M_{v'}(\mc D', \sigma')$ if and only if $m=m'$ and $v_0^2={v'_0}^2$. When this is the case, $M_v(\mc D, \sigma)$ and $M_{v'}(\mc D, \sigma)$ are deformation equivalent under a locally trivial deformation.
\end{thm}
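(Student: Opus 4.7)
The plan is to reduce the problem to the case of Gieseker moduli spaces on untwisted K3 surfaces, where the main result of Perego-Rapagnetta \cite{Perego-Rapagnetta-Irr} applies, and then to propagate the conclusion backwards through a chain of locally trivial deformations and birational transformations.

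For the Kuznetsov component case, I would first deform the pair $(X,\sigma)$ in a family $\mc X \to C$ of smooth cubic fourfolds over a smooth connected curve together with a fiberwise $v$-generic family of stability conditions $\famsigma$, exactly as in the proof of Theorem \ref{thmproj}. That construction already ensures that such a family contains infinitely many points $c$ for which $\Ku(\mc X_c)$ is equivalent to the derived category of some twisted K3 surface $(S_c,\alpha_c)$. The relative good moduli space $M_v(\Ku(\mc X/C), \famsigma) \to C$ has fibers that are, by Theorems \ref{thmproj} and \ref{locally fact}, projective $\Q$-factorial terminal symplectic varieties; by Lemma \ref{deformationquali} any two such fibers are deformation equivalent under a locally trivial deformation. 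This reduces the statement for $\Ku(X)$ to the corresponding statement for $D^b(S_c,\alpha_c)$ with a suitable $v$-generic Bridgeland stability condition.

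For the twisted K3 case, the connectedness of $\Stab^\dagger$ together with the argument of Theorem \ref{Projnongeneric}(2), adapted to the twisted setting, shows that any two $v$-generic Bridgeland moduli spaces on $D^b(S,\alpha)$ are birational, and Proposition \ref{propbirdef} upgrades this to a locally trivial deformation equivalence. Deforming $(S,\alpha)$ in a family of twisted K3s with fiberwise algebraic $v$, and using that for a suitable specialization the Brauer class becomes trivial while $v$ persists as a Gieseker--Mukai vector, Lemma \ref{deformationquali} reduces the problem to an untwisted Gieseker moduli space $M_v^H(S)$ for a polarized K3 surface $(S,H)$ with $H$ being $v$-generic.

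At this stage the main theorem of \cite{Perego-Rapagnetta-Irr} applies directly: every such $M_v^H(S)$ is an irreducible symplectic variety, its smooth locus is simply connected (classically, by O'Grady--Yoshioka), and its locally trivial deformation class depends only on $(m,v_0^2)$. Propagating these properties backwards through the chain of locally trivial deformations and birational transformations is handled by Proposition \ref{invariance}, whose parts (1) and (2) transfer the irreducible symplectic variety condition across locally trivial deformations and birational maps of $\Q$-factorial terminal symplectic projective varieties with simply connected smooth locus. The converse direction follows because the locally trivial deformation class determines the homeomorphism type (Proposition \ref{local triviality}(1)), and in particular the dimension $m^2 v_0^2 + 2$ of $M_v(\mc D,\sigma)$ together with the dimension $v_0^2 + 2$ of the deepest singular stratum $P_{(m,1)} \cong M_{v_0}$ (cf.\ Remark \ref{deepeststratum}), which together pin down $m$ and $v_0^2$. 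The main technical obstacle will be the twisted-to-untwisted specialization in the third paragraph, which requires carefully constructing a family of twisted K3s in which $v$ remains algebraic and the Brauer class can be trivialized at a special fiber while the relative Bridgeland moduli space is controlled across the base; a secondary obstacle is verifying that the simply connected smooth locus hypothesis of Proposition \ref{invariance}(1) is preserved at each step of the reduction.
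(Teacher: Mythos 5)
Your overall strategy coincides with the paper's: reduce along the chain cubic fourfold $\to$ twisted K3 $\to$ untwisted K3 $\to$ Gieseker moduli space via locally trivial deformations (Theorems \ref{thmproj}, \ref{locally fact}, \ref{thmnamikawa}, Lemma \ref{deformationquali}) and birational maps (Theorem \ref{Projnongeneric}(2), Proposition \ref{propbirdef}), anchor the irreducible-symplectic-variety property and the simple connectedness of the smooth locus at the Gieseker endpoint using \cite{Perego-Rapagnetta-Irr}, and propagate it back with Proposition \ref{invariance}. Your converse argument (recovering $m$ and $v_0^2$ from the dimension of the moduli space and of the deepest stratum, both preserved by the stratified homeomorphism of Proposition \ref{local triviality}) is the paper's argument. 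The one genuine difference in route is at the untwisted K3 stage: you invoke the Perego--Rapagnetta deformation-class result wholesale, whereas the paper re-runs the O'Grady--Yoshioka elliptic-fibration argument (Theorem \ref{thm defclassk3}) to connect arbitrary Bridgeland Mukai vectors with the same $(m,v_0^2)$ by deformations and autoequivalences (tensorization and the spherical twist by $\mc O_S$). Your shortcut is logically sufficient for Theorem \ref{thmisv} once each side has been reduced to \emph{some} Gieseker moduli space, since \cite{Perego-Rapagnetta-Irr} connects any two of those; what the paper's longer route buys is the statement of Theorem \ref{thm defclassk3} for Bridgeland moduli spaces directly, which is also used later in Section \ref{sect second cohomology}.

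The one step you have not actually supplied is the twisted-to-untwisted reduction, which you correctly flag as the main obstacle. It is not enough to ``deform $(S,\alpha)$ in a family of twisted K3s with fiberwise algebraic $v$'': for a general rational $B$-field lift of $\alpha$ there is no reason the Noether--Lefschetz-type locus where $v$ stays algebraic meets the untwisted locus, and one must also keep the stability condition $v$-generic and the relative moduli space under control along the way. The paper fills this in Proposition \ref{prop untwistedK3} in two moves: first it uses Bayer--Macr\`i to replace the Bridgeland moduli space by an \emph{isomorphic} Gieseker moduli space of $H$-semistable twisted sheaves of positive rank on a derived-equivalent twisted K3; then it invokes the Meachan--Zhang construction, which (after twisting $v$ by $\exp(H)$ to make it ``deformable'' and replacing $B$ by a lift with $B^2<0$ such that $\mathrm{Span}_{\Q}\{H,B\}\cap H^\perp$ contains no $(-2)$-class) produces a family of polarized twisted K3s degenerating the Brauer class to zero with $v$ algebraic and the polarization $v$-generic throughout, together with a relative moduli space. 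Without this (or an equivalent substitute) your third paragraph is a statement of the problem rather than a proof of it; with it, the rest of your argument goes through as written.
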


\begin{rem}  \label{remHilbnISV}
If  $\sigma_0$ is not $v$-generic, Example \ref{HilbnotISV} shows that  $M_v(\sigma)$ is not necessarily an irreducible symplectic variety.
\end{rem}


\subsection{Cubic fourfold case}

Let $X$ be a smooth cubic fourfold, fix $m \ge 2$ and let $v_0 \in \wt H_{alg}(Ku(X), \Z)$ be primitive. As usual, we assume that $m$ and $k$  are such that $v=mv_0$ is not of OG10 type. Let $\sigma \in \Stab^\dagger(Ku(X))$ be $v$-generic. 

As in \S \ref{sect ample}, consider a deformation $\mc X \to T$ of $X$ over which $v$ stays algebraic. There is a countable dense set of $t \in T$ for which $Ku(\mc X_t)\cong \mc D(S_t)$ for some K3 surface $S_t$. Let $M$ be as in the proof of Theorem \ref{thmproj}.
Up to passing to a finite cover of $T$ and possibly also to an open subset of $T$, there is a family $\famsigma \in \Stab_M(Ku(\mc X))$ of $v$-generic stability conditions extending $\sigma \in \Stab^\dagger(Ku(X))$. By Theorems \ref{thmproj} and \ref{locally fact}, the resulting family of moduli spaces
\[
M_v(Ku(\mc X), \famsigma) \to T
\]
has the property that the fibers are projective symplectic varieties with factorial and terminal singularities. Moreover, by Theorem \ref{thmnamikawa}, the family is locally trivial. Thus, $M_v(Ku(X),\sigma)$ is deformation equivalent under a locally trivial deformation to a moduli spaces of objects on a K3 surface, semistable with respect to a generic stability condition.

\subsection{The case of twisted K3 surfaces}

Let $(S,H)$ be a polarized K3 surface, let $ \alpha$ be a Brauer class with a  $B$-field  $B \in H^2(S,\mathbb Q)$ lifting $\alpha$.   We denote by $\mc D(S, \alpha)$ the derived category of $\alpha$-twisted coherent sheaves on $S$ (cf. \cite[\S 1]{Huybrechts-Stellari}) and set $\mc D=\mc D(S, \alpha)$. Recall that $H^*_{alg}(S,B, \Z) := (\exp(B) H^*_{alg}(X,\mathbb Q) \cap H^*_{alg}(S, \Z)\subset H^*_{alg}(S, \Z)$ contains the Chern characters of objects in $\mc D(S, \alpha)$.

We now record some results of \cite[\S 3]{Meachan-Zhang}.
The following proposition guarantees that, up to passing to an isomorphic moduli space, any moduli spaces of twisted objects can  be deformed to a moduli space of untwisted objects. This involves deforming the K3 surface together with an appropriate choice of the $B$-field lifting the Brauer class, to an untwisted K3 surface in such a way that the Mukai vector stays algebraic along the deformation.

\begin{prop} \label{prop untwistedK3}
Let $v=m v_0$, where $v_0 \in H^*_{alg}(S,B, \Z)$ with $v_0^2=2k \ge 2$ and let $\sigma \in \Stab^\dagger(S, \alpha)$ be a $v$-generic stability condition. 
Then there exists a K3 surface $S'$,  a Mukai vector $w=mw_0 \in  H^*_{alg}(S',\Z)$ with $w_0^2=2k$, a $w$-generic stability condition $\sigma' \in \Stab^\dagger(S')$, and a smooth connected curve $T$ parametrizing a family of moduli spaces 
\[
\mc M \to T
\]
 and two points $t_1, t_2 \in T$ such that $\mc M_{t_1}\cong M_v(S, \alpha, \sigma)$ and $\mc M_{t_2}\cong M_w(S, \sigma')$.
In particular, $M_v(S, \alpha, \sigma)$ can be deformed to the untwisted moduli space via a locally trivial deformation. Up to passing to an \'etale cover of $T$ we can also assume that the stable locus of this family of moduli spaces admits a quasi-universal family.
\end{prop}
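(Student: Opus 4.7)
\smallskip

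The strategy is to follow \cite[\S 3]{Meachan-Zhang}: deform the twisted K3 surface $(S,\alpha)$ along an analytic locus where $v_0$ stays algebraic until one reaches an untwisted K3, and then invoke the relative moduli space machinery of \cite{BLMNPS} together with Namikawa's theorem.

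First I would work inside the local deformation space of the triple $(S, \alpha, B)$, where $B \in H^2(S, \Q)$ is a $B$-field lifting $\alpha$. Inside this deformation space, the Hodge locus along which the class $v_0 \in H^*_{alg}(S, B, \Z)$ stays algebraic has codimension at most one and hence is of positive dimension; within this Hodge locus the condition that the $B$-field become integral (equivalently, that the Brauer class become trivial) is satisfied on a countable dense union of positive-codimension analytic subvarieties. A density argument as in loc.~cit.\ then produces a smooth connected curve $T$ inside the Hodge locus of $v_0$, passing through $(S, \alpha)$ at a point $t_1$ and through an untwisted K3 surface $S'$ at another point $t_2$.

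Next I would extend $\sigma$ to a family of stability conditions $\famsigma$ on the relative derived category $\mc D/T$, in the sense of \cite{BLMNPS}, exactly as in the construction used in the proof of Theorem \ref{thmproj}; after shrinking $T$ we may assume $\famsigma_t$ is $v$-generic for every $t \in T$. Applying \cite[Thm.~21.24]{BLMNPS} yields a proper algebraic space $\mc M \to T$ whose fiber at $t$ is the good moduli space of $\famsigma_t$-semistable objects of class $mv_0$. By construction $\mc M_{t_1} \cong M_v(S,\alpha,\sigma)$, while $\mc M_{t_2} \cong M_w(S',\sigma')$ for some primitive $w_0 \in H^*_{alg}(S',\Z)$ with $w_0^2 = v_0^2 = 2k$ (the Mukai square being locally constant), $w = m w_0$, and a $w$-generic $\sigma' \in \Stab^\dagger(S')$.

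Finally, each fiber of $\mc M \to T$ is a projective symplectic variety with factorial terminal singularities by Theorem \ref{locally fact}, so by Namikawa's Theorem \ref{thmnamikawa} the induced deformations are locally trivial on formal neighborhoods of $t_1$ and $t_2$; combined with Lemma \ref{lem loctrivZariski}, this yields local triviality of $\mc M \to T$ after possibly shrinking $T$ to a Zariski open subset containing $t_1$ and $t_2$. The quasi-universal family on the relative stable locus, after passing to an \'etale cover of $T$, is obtained by the same rigidification-plus-\'etale-sheafification argument recalled in the proof of Theorem \ref{thm gms}(3), applied to the relative stable moduli space. The main obstacle is the first step, namely the period-theoretic construction of the curve $T$: ensuring that the integral $B$-field locus is dense inside the Hodge locus of $v_0$ is what makes the passage from twisted to untwisted possible while preserving algebraicity of $v_0$ throughout.
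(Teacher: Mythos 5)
Your proposal follows the right general thread (Meachan--Zhang \S 3) but misses the key reduction that makes the paper's argument work, and the route you substitute has a real gap. The paper does \emph{not} deform the Bridgeland moduli space directly. It first invokes the result of Bayer--Macr\`i (proof of Thm.\ 1.3(a) of \cite{Bayer-Macri-Proj}) that every Bridgeland moduli space on a (twisted) K3 is \emph{isomorphic} to a Gieseker moduli space of positive-rank $H$-semistable twisted sheaves on a derived-equivalent twisted K3. Only then does it run the Meachan--Zhang deformation: after tensoring by a power of $H$ to make $v$ ``deformable'' (their Lemma 3.3/Def.\ 3.2) and choosing a $B$-field with $B^2<0$ such that $\mathrm{Span}_{\Q}\{H,B\}\cap H^\perp\cap H^2(S,\Z)$ contains no $(-2)$-class (their Lemma 3.6), one gets a family of \emph{polarized} K3 surfaces with a relatively ample $\mc H$ that is $v_t$-generic for \emph{every} $t$ (their Props.\ 3.7 and 3.9), and the relative Gieseker moduli space exists by standard GIT/Lieblich arguments (their Prop.\ 3.10, which also gives the quasi-universal family).

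Your version instead asks for a family of Bridgeland stability conditions $\famsigma$ on the relative derived category of a family of twisted K3 surfaces degenerating to an untwisted one, extending the given $\sigma$ and staying $v$-generic. That is precisely the ingredient that is not available off the shelf: the relative stability machinery of \cite{BLMNPS} that the paper uses in families is set up for the cubic-fourfold situation (and for families of untwisted surfaces via Thm.\ 24.1), and nothing in the sources cited lets you extend an arbitrary $\sigma\in\Stab^\dagger(S,\alpha)$ over a family in which the Brauer class trivializes. Relatedly, your period-theoretic construction of $T$ glosses over why $v$ stays ($B$-twisted) algebraic along the whole deformation --- this is exactly what the ``deformable'' condition and the tensoring-by-$H$ step in Meachan--Zhang are for --- and your ``shrink $T$ to make $\famsigma_t$ generic'' step risks discarding the untwisted point $t_2$ unless you argue, as the paper does via Prop.\ 3.9 of \cite{Meachan-Zhang}, that genericity holds at every point of $T$ by construction. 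The fix is to insert the Bayer--Macr\`i reduction at the start and then quote the Meachan--Zhang results verbatim, which is what the paper does.
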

\begin{proof}
In  \cite{Bayer-Macri-Proj} it is proved that any moduli spaces is isomorphic to a moduli space of Gieseker semistable  (twisted) sheaves of positive rank  (see the proof of Theorem 1.3 (a), pg. 730 of loc. cit).  So up to replacing $(S, \alpha)$ with another (twisted) K3 surface that is derived equivalent to the original one, we can assume that $M_v(S, \alpha, \sigma)$ is a moduli space of Gieseker $H$-semi-stable  (twisted) sheaves of positive rank, for some ample class $H$ on $S$. 
We are thus in the position of applying the results of \S 3.1 of \cite{Meachan-Zhang}, where it is proved that these moduli spaces can be deformed to untwisted moduli spaces. For the readers' convenience, we give an outline of their proof.

 Let $B$ be a rational $B$-field lifting $\alpha$. By Lemma 3.3 of  \cite{Meachan-Zhang}, up to tensoring by a multiple of $H$ (which multiplies the Mukai vector by $\exp(H)$) we can assume that $v$ is deformable in the sense of Definition 3.2 of loc. cit. This means that for any  deformation $f: \mc S \to T$ of $S$ where $H$ stays algebraic, and for any choice of local section of $R^2 f_* \mathbb Q_{\mc S}$ extending $B$, the Mukai vector $v$ deforms as a $B$-twisted algebraic class. By Lemma 3.6 of \cite{Meachan-Zhang}, up to  changing the $B$-field lifting $\alpha$, we can assume that $B^2<0$ and that $Span_{\Q}\{H, B\} \cap H^\perp \cap H^2(S,\mathbb Z)$ does not contain any $(-2)$-class.  We are thus in the position of applying Proposition 3.7 of \cite{Meachan-Zhang}, which shows that there exists a family $f: (\mc S, \mc H) \to T'$ of polarized K3 surfaces such $(\mc S_{t_1}, \mc H_{t_1})=(S,H)$, a section $\mc B$ of $R^2 f_* \mathbb Q_{\mc S}$ with the property that $\mc B_{t_1}$ lifts the Brauer class $\alpha$, and a section $v$  of $R^2 f_* \mathbb Z_{\mc S}$ such that $v_t \in H_{alg}^*(\mc S_t, B_t, \mathbb Z)$. Moreover, by Proposition 3.9 of  \cite{Meachan-Zhang} we can assume that $\mc H_t$ is $v_t$-generic for every $t \in T'$. Finally, by Proposition 3.10 of \cite{Meachan-Zhang}, up to passing to an \'etale cover of $T$ we can assume there is a relative family of moduli spaces.
The last statement is proved in \cite[Prop. 3.10]{Meachan-Zhang}
\end{proof}

\subsection{The case of K3 surfaces} \label{sec OGYoshPR}

In this section we streamline the classical argument that goes back to O'Grady and Yoshioka (see also  \cite{Perego-Rapagnetta-Irr}) that consists in deforming to an elliptic K3 surface in order to connect any two moduli spaces with the same numerical invariants (i.e. $m$ and $k$).

\begin{thm} \label{thm defclassk3}
Let $S$ be a K3 surface, $v=m v_0 \in H^*_{alg}(S,\Z)$ a positive Mukai vector with $v_0^2=2k \ge 2$  and let $\sigma \in \Stab^\dagger(S)$ be a $v$-generic stability condition. The locally trivial deformation class of the moduli space $M_v(S, \sigma)$ is uniquely determined by the integers $m$ and $v_0^2$.
\end{thm}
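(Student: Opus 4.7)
The strategy is to follow the O'Grady--Yoshioka deformation argument, streamlined via the Bridgeland machinery developed in the previous sections, ultimately reducing any such moduli space to a canonical model on an elliptic K3 surface whose locally trivial deformation class is manifestly determined by $(m,k)$.

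First I pass from a $v$-generic Bridgeland stability to a classical Gieseker polarization. By Theorem \ref{Projnongeneric}(2) any two $v$-generic stability conditions in $\Stab^\dagger(S)$ produce birational moduli spaces; by Theorem \ref{locally fact} these are projective, $\Q$-factorial and terminal symplectic; and by Proposition \ref{propbirdef} they are therefore deformation equivalent under a locally trivial deformation. Up to such a deformation, and using that any Bridgeland moduli space is isomorphic to a moduli space of Gieseker semistable sheaves of positive rank on a suitable Fourier--Mukai partner (as in the proof of \cite[Thm.~1.3(a)]{Bayer-Macri-Proj}), I may assume that $M_v(S,\sigma)$ is a Gieseker moduli space $M_v^H(S)$ for a $v$-generic polarization $H$. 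Moreover, the transitive action of derived autoequivalences of $D^b(S)$ on primitive Hodge classes of fixed positive square allows me to normalize $v_0$ to any convenient form of square $2k$, chosen to be compatible with an elliptic fibration in the deformation below.

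Second, I deform $(S,H,v_0)$ through a smooth connected curve $T$ in the moduli space of polarized K3 surfaces to a triple $(S_0,H_0,v_0^{(0)})$, where $S_0$ carries an elliptic fibration with section $\Sigma$ and fiber $f$, and $v_0^{(0)}=(0,\Sigma+(k+1)f,s)$ is a primitive algebraic class of square $2k$ (indeed $(\Sigma+(k+1)f)^2=-2+2(k+1)=2k$). Along $T$ I arrange that $v_0$ remains primitive and algebraic and that $H$ remains $v$-generic, by choosing $T$ inside an appropriate Noether--Lefschetz locus. By Theorems \ref{thmproj} and \ref{locally fact} the resulting relative moduli space $\mc M_v\to T$ has projective $\Q$-factorial terminal symplectic fibers. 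Applying Theorem \ref{thmnamikawa} fiberwise together with Lemma \ref{lemsmalldef} and Lemma \ref{deformationquali}, the family $\mc M_v\to T$ is, after possibly shrinking, a locally trivial deformation; hence $M_v^H(S)$ and $M_v^{H_0}(S_0)$ lie in the same locally trivial deformation class. On the elliptic K3 $S_0$ with a $v$-generic $H_0$, Gieseker semistable sheaves of Mukai vector $v=mv_0^{(0)}$ are precisely the pure one-dimensional sheaves of rank one on their support in $|m(\Sigma+(k+1)f)|$, so $M_v^{H_0}(S_0)$ is the relative compactified Jacobian of degree $s$ line bundles over this linear system of integral curves of arithmetic genus $g=m^2k+1$. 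This model depends only on $m$ and $g$, hence only on $m$ and $k$, which closes the argument.

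\textbf{Main obstacle.} The delicate step is the deformation in the second paragraph: arranging that along the entire base $T$ the class $v_0$ remains primitive and algebraic, the polarization $H$ remains $v$-generic, and no new totally semistable walls of $\sigma$-stability appear that could break the birational-invariance reduction. This requires a careful choice of $T$ inside a Noether--Lefschetz locus of the moduli space of polarized K3 surfaces (so that the Picard lattice is controlled along the deformation), together with an openness-of-genericity argument, and finally the verification that local triviality of the family of moduli spaces propagates from the generic fiber to the whole of $T$; the last point uses Namikawa's theorem (Theorem \ref{thmnamikawa}) applied to each fiber, combined with Lemma \ref{lemsmalldef} and Lemma \ref{deformationquali} to transport the $\Q$-factorial terminal property and the locally trivial structure along the base.
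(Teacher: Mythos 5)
Your overall strategy coincides with the paper's: reduce to a normalized Mukai vector on an elliptic K3 surface by combining deformations of the surface with derived autoequivalences, and upgrade every step to a locally trivial deformation using factoriality/terminality (Theorem \ref{locally fact}), Namikawa's theorem (Theorem \ref{thmnamikawa}), and Proposition \ref{propbirdef}. That scaffolding is correct and matches the paper.

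The genuine gap is the sentence asserting that ``the transitive action of derived autoequivalences of $D^b(S)$ on primitive Hodge classes of fixed positive square'' lets you normalize $v_0$. No such transitivity holds on a fixed $S$: autoequivalences act on $H^*_{alg}(S,\Z)$ through (a subgroup of) its isometries, and the orbits of primitive vectors of square $2k$ are constrained, e.g., by the divisibility of $v_0$ inside the lattice $H^*_{alg}(S,\Z)$, which depends on $\Pic(S)$ and can separate vectors of the same square. Moreover the rank $r$ of $v_0$ is a topological invariant under deformations of $S$, so your subsequent deformation to $(S_0,H_0,(0,\Sigma+(k+1)f,s))$ cannot absorb the normalization either. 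The actual reduction requires interleaving Noether--Lefschetz deformations (to enlarge $\Pic(S)$ and make suitable combinations $s''L+ar''A$ primitive) with tensorizations by line bundles and spherical twists by $\mc O_S$; this is precisely the eight-step reduction in the paper's proof and is the technical heart of the argument, which your proposal replaces by an unjustified (and false) transitivity claim. A secondary, smaller issue: your terminal model is left as a ``relative compactified Jacobian of degree $s$'' without normalizing $s$, but $\overline{\Jac}{}^s$ over $|mL'|$ is not a priori independent of $s$ (only $s$ modulo twists by fiber classes and duality is controlled), and the identification of all semistable sheaves with rank-one sheaves on integral curves fails over the non-integral members of the linear system; the paper avoids both problems by normalizing the vector all the way to $(0,L,1)$ and never invoking the Jacobian description.
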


The case of OG10 moduli spaces is treated in \cite{Meachan-Zhang}. We will use the action on $D^b(S)$ of certain derived equivalences.

\begin{rem} \label{remAutD} By \cite[Lem 8.2]{Bridgeland-Annals}, the group $\Aut(\mc D^b(S))$ acts on the stability manifold $\Stab(S)$. Let  $\Aut^0(\mc D^b(S))$ be the subgroup preserving the connected component $\Stab^\dagger(S)$. For any $\Phi \in \Aut^0(\mc D^b(S))$ and any Mukai vector $v$, there is an isomorphism $M_v(\sigma) \to M_{\Phi(v)}(\Phi(\sigma) )$. Moreover, since the action of $\Phi$ on $\Stab^\dagger(S)$ sends the $v$-walls to the $\Phi(v)$-walls, up to moving slightly $\sigma$ in its chamber, we can assume that  $\sigma$ is both $v$ and $\Phi(v)$-generic. By Theorem \ref{Projnongeneric} (3) there is a birational map
\[
M_v(\sigma) \dashrightarrow M_{\Phi(v)}(\sigma).
\]
Note that because of this observation, when considering birational maps induced by these autoequivalences, one does not need to worry about preserving  the genericity of the stability condition.
We will use two types of autoequivalences that belong to $\Aut^0(\mc D^b(S))$:
\begin{enumerate}
\item Tensorization by a line bundle $A$, which acts on Mukai vectors by $v\mapsto\exp(\ch(A)) v$.
\item Spherical twist with respect to the spherical object $\mc O_S$, which acts on Mukai vectors by the reflection in the $-2$-class $v(\mc O_S)=(1,0,1)$. 
\end{enumerate}
\end{rem}

\begin{proof}[Proof of Theorem \ref{thm defclassk3}]
We claim that up to 
\begin{itemize}
\item[(a)] deforming the K3 surface while keeping the Mukai vector algebraic 
\item[(b)] acting by elements of $\Aut^0(\mc D^b(S))$
\end{itemize}
we can connect any two Mukai vectors of the form $mv_0$, with $v_0^2=2k$. 
The claim proves one part of theorem, namely that all moduli spaces with given $m$ and $k$ are deformation equivalent. Indeed, by the existence of stability conditions for families of surfaces \cite[Thm 24.1]{BLMNPS}, deforming the K3 surface while keeping the Mukai vector algebraic results in a deformation of the corresponding moduli spaces; as in \cite[Prop. 30.8]{BLMNPS} we can ensure that the stability conditions are $v$-generic, so that by Theorem \ref{locally fact}, and Theorem \ref{thmnamikawa}, the resulting family is locally trivial. 
Moreover, by Remark \ref{remAutD}, the action of $\Phi \in  \Aut^0(\mc D^b(S))$ induces a birational map of moduli spaces, and by Proposition \ref{propbirdef} and Theorem \ref{Projnongeneric}, for generic stability conditions the corresponding birational moduli spaces are deformation equivalent under a locally trivial deformation.
Finally, one  concludes using Propositions \ref{local triviality} and \ref{invariance}, and the fact that by \cite[Prop. 3.2]{Perego-Rapagnetta-Irr} the fundamental group of the smooth locus of Gieseker moduli spaces is simply connected.

Before proving the claim, we show that for different pairs of $m$ and $k$ the deformation classes are distinct. Indeed, suppose $M_v(\mc D, \sigma)$ deforms to $M_{v'}(\mc D', \sigma')$, for some Mukai vectors $v=mv_0$ and $v'=m'v_0'$ with $v_0^2=2k$ and ${v'}_0^2=2k'$. Then $M_v(\mc D, \sigma)$ and $M_{v'}(\mc D', \sigma')$ can be deformed into each other with a locally trivial deformation and hence by Proposition \ref{local triviality} the deepest stratum of preserved under this deformation. As a consequence, $M_{v_0}(\mc D, \sigma)$ deforms to $M_{v'_0}(\mc D', \sigma')$, and the conclusion follows. 

Let us now prove the claim.
For a primitive Mukai vector $v_0$, we will write $v_0=(r,sL,t)$, where $r,s,t \in \Z$ and $L \in \Pic(S)$ is primitive.
We now show that using (a) and (b) (in fact, only tensorization by a line bundle and spherical twist by $\mc O_S$ will be enough), we can transform any Mukai vector into the Mukai vector $(0,L,1)$ on an elliptic surface $(S,F)$ with section $R$ and where $L=R+(k-1)F$. The following are the steps:
\begin{enumerate}

\item By Remark \ref{pmv}, up to switching $v_0$ with $-v_0$, we can assume that $r \ge 0$. 

\item  \label{rzerook} If $r=0$, since $v_0^2=s^2 L^2=2k \ge 2$, again up to switching $v_0$ with $-v_0$, we can assume that $L$ is in the positive cone of $S$ and $s>0$. By the irreducibility of the Noether-Lefschetz locus parametrizing K3 surfaces with a line bundle of degree $2k$, we can assume that $S$ has an elliptic fibration $F$ with a section $R$ and $L=\mc O_S(R+(k-1)F)$. Tensoring by $\mc O_S((-s+1)F)$, we can assume that $t=1$, i.e. that $v=(0, R+(k-1)F,1)$ and hence we are done.

\item \label{rsge0} If $r >0$, up to tensoring by a power of $L$, we can assume that $s >0$.

\item  \label{rank2} We claim that if $r,s>0$, then we can assume that $r=s r'$ with $gcd(s,t)=1$. Indeed, since $L$ is primitive, up to deformation, we can assume that $\rho(S) = 2$ and that $L$ is one of the generators of $\Pic(S)$. In particular, we may assume that there exists a primitive line bundle $A \in \Pic(S)$, such the lattice spanned by $L$ and $A$ is saturated in $\Pic(S)$. Let $c=gcd(r,s)$ and set $r=cr''$ and $s=cs''$. There exists an $a \in \Z$ such that $s''L+ar''A$ is primitive and in the positive cone of $S$. Tensoring by $aA$, the second entry of the Mukai vector becomes $c(s''L+ar''A)$. In other words, up to deforming $S$ and tensoring by a line bundle, we can assume   $s|r$.

\item Suppose that $r=s r'$ with $gcd(s,t)=1$. Applying a spherical twist by $\mc O_S$ (which switches $r$ and $t$ and multiplies $s$ by $-1$), we can assume that $gcd(r,s)=1$;   then applying step (\ref{rsge0}) again, we can further assume that $s>0$. In other words, we can assume that $v_0=(r,sL,st)$ with $s >0$ and $gcd(s,r)=1$.

\item As in Step \ref{rank2}, up to deformation we can assume that $\rho(S) \ge 2$ and that there is primitive line bundle $A \in \Pic(S)$, such the lattice spanned by $L$ and $A$ is saturated in $\Pic(S)$. Up to tensoring by a multiple of $A$ we can assume that the second entry of $v_0$ is primitive and in the positive cone. Hence, we can assume that $s=1$ with $L$ in the positive cone.

\item As above, by the irreducibility of the Noether-Lefschetz locus parametrizing K3 surfaces with a line bundle of degree $2k$, we can assume that $S$ has an elliptic fibration $F$ with a section  $R$ and that $L=\mc O_S(R+(l-1)F)$. Here $l$ is such that $L^2=2l-2$. Since $v_0^2=2k=2l-2-2rt$, tensoring by $\mc O_S(tF)$, we can assume that $l-1=k$ and $t=0$. We use a spherical twist by $\mc O_S$, to assume that $r=0$.

\item Going back to Step (\ref{rzerook}) concludes the process.
\end{enumerate}

\end{proof}

\section{Second integral cohomology of moduli spaces} \label{sect second cohomology}

The aim of this section is to prove the following Theorem, which extends the well known results that hold for a) primitive Mukai vectors and Bridgeland stabilities on K3 surfaces and Kuznetsov components \cite{Mukai-Tata,OGradyWt2,Yoshioka-moduli-abelian,BLMNPS} b)  Giesker moduli spaces on K3 surfaces with  non-primitive Mukai vector that are not of OG10-type \cite{Perego-Rapagnetta-Irr}, c)  singular OG10-type moduli spaces  on K3 surfaces \cite{Meachan-Zhang}.

In this section we let $\mc D$ be the derived category of a  (twisted)  K3 surface or the Kuznetsov component of a smooth cubic fourfold and $X$ will denote either the K3 surface or the cubic fourfold.

\begin{thm} \label{vperp} 
Fix $m \ge 2$ and let $v_0 \in K_{num}(\mc D)$ be primitive. We assume that $v=mv_0$ is not of OG10 type. For any $\sigma \in \Stab^\dagger(\mc D)$ $v$-generic, let $M_v(\mc D, \sigma)$ be the moduli space of $\sigma$-semistable objects and let $M_v^s(\mc D, \sigma)$ be the stable locus, which coincides with the smooth locus. 
The Mukai homomorphism defined using a quasi-universal family on the stable locus induces an isomorphism
\[ 
v^\perp \cong H^2(M_v(\mc D, \sigma), \Z).
\]
\end{thm}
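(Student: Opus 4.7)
The plan is to define the Mukai morphism on the stable locus, extend it to the full moduli space by codimension reasons, and then transport the statement to the case where it is already known---namely Gieseker moduli spaces of untwisted sheaves on a K3 surface, treated by Perego--Rapagnetta \cite{Perego-Rapagnetta-second-cohomology}---using the locally trivial deformations constructed in Section \ref{modspacesISV}.

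First I would set up the map itself. By Theorem \ref{thm gms}(3), the stable locus $M_v^s(\mc D,\sigma)$ carries a quasi-universal family $\mc E$ of some similitude $\rho$, and the standard Mukai--Donaldson formula
\[
\theta_{\mc E}(x)\;:=\;\tfrac{1}{\rho}\bigl[(p_{M^s})_*\bigl(p_X^*(x^\vee\sqrt{\td(X)})\cdot\ch(\mc E)\bigr)\bigr]_1,\qquad x\in v^\perp,
\]
defines a group homomorphism $\theta\colon v^\perp\to H^2(M_v^s,\Z)$, independent of $\mc E$ by the usual argument. Since by Corollary \ref{cordimsing} the subset $M_v\setminus M_v^s=\Sing M_v$ has complex codimension $\ge 4$, the restriction $H^2(M_v,\Z)\to H^2(M_v^s,\Z)$ is an isomorphism, so $\theta$ extends uniquely to $v^\perp\to H^2(M_v,\Z)$.

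Next I would connect $M_v(\mc D,\sigma)$ to a Gieseker moduli space by a chain of locally trivial deformations mirroring Section \ref{modspacesISV}: in the cubic fourfold case, deform $X$ along a curve in the Hassett divisor where $v$ stays algebraic (as in the proof of Theorem \ref{thmproj}) until the Kuznetsov component is equivalent to $D^b(S,\alpha)$; in the twisted K3 case, apply Proposition \ref{prop untwistedK3} to reach an untwisted K3 surface; in the untwisted Bridgeland case, wall-cross in $\Stab^\dagger(S)$, possibly composing with autoequivalences in $\Aut^0(D^b(S))$ as in the proof of Theorem \ref{thm defclassk3}, to land in a Gieseker chamber with Mukai vector $w=mw_0$, $w_0^2=v_0^2$. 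By Theorems \ref{locally fact} and \ref{thmnamikawa}, each link in this chain can be arranged to be locally trivial, and by Theorem \ref{thm gms}(3), after an \'etale base change the relative stable locus carries a relative quasi-universal family.

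Using the relative quasi-universal family, the fiberwise Mukai morphisms assemble into a morphism of local systems over each parameter base,
\[
\Theta\;\colon\;\underline{v^\perp}\;\longrightarrow\;R^2\pi_*\Z_{\mc M^s},
\]
where $\underline{v^\perp}$ is the orthogonal to the flat extension of $v$ inside the Mukai local system \eqref{locsystemMukai} (and its K3 analogue). Because $\Theta$ is a morphism of local systems defined by algebraic cycles varying continuously, it is fiberwise an isomorphism as soon as it is at one point; by Perego--Rapagnetta \cite{Perego-Rapagnetta-second-cohomology} this holds at the Gieseker end, so it holds at the original fiber. The extension from $M_v^s$ to $M_v$ works identically in the relative setting, since the relative singular locus has relative codimension $\ge 4$. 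The main obstacle will be the coherent construction of $\Theta$ across the three different deformation regimes, where the flat extension of the Mukai vector and the relative quasi-universal family are built by genuinely different mechanisms (Hassett divisors for cubics, $B$-field deformations for twisted K3s, wall-crossing plus autoequivalences on the K3 side); verifying that these patch into a single morphism of local systems and that the endpoint identification matches the Perego--Rapagnetta map is where the bulk of the bookkeeping lies.
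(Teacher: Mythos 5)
Your overall strategy (define $\theta$ on the stable locus via a quasi-universal family, then transport the statement through the chain of locally trivial deformations and autoequivalence-induced birational maps down to the Gieseker case of Perego--Rapagnetta) is the same as the paper's. But there is one genuine gap: you assert that because $\Sing M_v = M_v\setminus M_v^s$ has complex codimension $\ge 4$ (Corollary \ref{cordimsing}), the restriction $H^2(M_v,\Z)\to H^2(M_v^s,\Z)$ is an isomorphism. Codimension of the removed locus does not control this when the ambient space is singular along that locus: the relevant obstruction is $H^k(M_v, M_v^s;\Z)$, which is governed by the cohomology of the links of the singular strata, not by their codimension. For example, the affine cone over a smooth projective threefold has an isolated singular point (codimension $4$), is contractible, yet its smooth locus retracts onto the link, which typically has nonzero $H^2$; so restriction to the smooth locus fails to be an isomorphism on $H^2$. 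This is precisely why the paper does not argue by codimension: it shows instead that ``$j^*\colon H^2(M_v,\Z)\to H^2(M_v^s,\Z)$ is an isomorphism'' is a property preserved under locally trivial deformation (using the stratified homeomorphisms of Proposition \ref{local triviality}, which respect the smooth loci) and under the birational maps of Proposition \ref{propbirdef}, and then imports the statement from the Gieseker case, where it is a nontrivial theorem of Perego--Rapagnetta (their Prop.~3.5(2)) proved using the explicit local models of Kaledin--Lehn--Sorger. Your deformation chain already contains everything needed to run this transport argument, so the fix is to add $j^*$ to the list of properties you carry along the chain rather than dispatching it at the outset.

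A secondary, smaller gap: the integrality of $\theta_{\mc E}=\tfrac{1}{\rho}\lambda_{\mc E}$ on $v^\perp$ is not ``the usual argument'' in this generality; when the moduli space is not fine one only knows a priori that $\lambda_{\mc E}(x)$ is divisible by $\rho$ after restricting to $v^\perp$ in cases where this has been proved (Perego--Rapagnetta, Prop.~4.4(2), for Gieseker moduli). The paper again establishes divisibility by deforming to the Gieseker locus via the relative quasi-universal family. Since you already set up the relative morphism of local systems $\Theta$, the same transport handles this, but it should be stated as part of what is being propagated rather than assumed.
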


Here $v^\perp$ is a sublattice of the extended Mukai lattice of $\mc D$, i.e. of $H^*(S, \mathbb Z)$ in the case $\mc D$ is the derived category of a (twisted) K3 surface and $\wt H(\Ku(X),\Z)$ in the case $\mc D$ is the Kuznetsov component of a cubic fourfold.

\begin{proof} 
The morphism $v^\perp \to H^2(M_v(\mc D, \sigma), \Z) $ is defined as the composition of the morphism
\[
j^*: H^2(M_v(\mc D, \sigma), \Z)  \to H^2(M^s_v(\mc D, \sigma), \Z).
\]
induced by the open immersion $j: M_v^s(\mc D, \sigma) \subset M_v(\mc D, \sigma)$ of the smooth (i.e. stable) locus and the morphism
\be \label{vperpform}
\theta_v: v^\perp  \to H^2(M_v^s(\mc D, \sigma), \Z).
\ee
induced by a quasi-universal family  on $M_v^s(\mc D, \sigma)$ (see Definition A.4 of \cite{Mukai-Tata}). The definition of this morphism will be recalled below.

Following a path already set in \cite{Mukai-Tata} (see also \cite{OGradyWt2,Yoshioka-moduli-abelian,Perego-Rapagnetta-second-cohomology, Meachan-Zhang}), we claim that for both $j^*: H^2(M_v(\mc D, \sigma), \Z)  \to H^2(M^s_v(\mc D, \sigma), \Z)$ and $\theta_v: v^\perp  \to H^2(M_v^s(\mc D, \sigma), \Z)$, whether or not they are isomorphism is preserved by locally trivial deformations of moduli spaces (with quasi-universal families) and by birational morphisms induced by Fourier-Mukai transforms. 
Assuming we have proved this, the theorem follows by deforming to the locus of untwisted K3 surfaces (Proposition \ref{prop untwistedK3} and Lemma \ref{lem universal fam} below),  using the steps outlined in \S \ref{sec OGYoshPR}, and finally using the fact that these two claims are true for Gieseker semistable sheaves on a K3 surface, as showed in Proposition 3.5 (2) and Theorem 1.6 of   \cite{Perego-Rapagnetta-second-cohomology}.

We start by proving the claim for $j^*$. If $\mc M \to T$  is a locally trivial family parametrized by a smooth connected complex space $T$, then for any two points $t_1, t_2 \in T$, the restriction $H^2(\mc M_{t_1}, \Z) \to H^2(\mc M^{reg}_{t_1})$ is an isomorphism if and only if $H^2(\mc M_{t_2}, \Z) \to H^2(\mc M^{reg}_{t_2})$ is an isomorphism. This is clear, since  by Proposition \ref{local triviality} any locally trivial morphism $\mc M \to T$ with $T$ smooth induces a homeomorphism between any two fibers which is compatible with the inclusion of the strata of a Whitney stratification and, in particular, with the open embedding of their respective smooth loci. 
Now let $f: M_1 \dashrightarrow M_2$ be a birational morphism. By Proposition \ref{propbirdef} $M_1$ and $M_2$ are deformation equivalent under a locally trivial deformation, so the open embedding of smooth locus in $M_1$ induces an isomorphism in cohomology if and only if the same is true for the smooth locus of $M_2$.

We now construct the morphism from $v^\perp$ to $H^2(M_v^s(\mc D, \sigma), \mathbb Z)$, and prove that it behaves well in families and under birational morphisms induced by Fourier-Mukai transforms. 

Let $\mc E$ on $M_v^s(\mc D, \sigma) \times X$ be a quasi-universal family and let $\rho$ be its similitude, i.e. the  positive integer such that for one point $[F] \in M_v^s(\mc D, \sigma)$ (and hence for any point), $\mc E_{[F]} \cong F^{\oplus \rho}$ (cf. \cite[Def. A.4]{Mukai-Tata}). This exists by Theorem \ref{thm gms}. We define the following classical homomorphism  (see also \S 4 \cite{Perego-Rapagnetta-second-cohomology} and in particular Def. 4.2 of loc. cit.) by setting 
\begin{equation} \label{vperpformstable}
\begin{aligned}
\lambda_{\mc E}: v^\perp & \longrightarrow H^2(M^s_v(\mc D, \sigma), \mathbb Z)\\
 x & \longmapsto c_1 ({p_1}_*(p_2^*(x^\vee) \otimes_{top} [ \mc E])).
 \end{aligned}
\end{equation}
Here $[ \mc E]$ denotes the class in $K^0_{top}(M_v^s(\mc D, \sigma) \times X)$,  $\cdot \otimes_{top} \cdot$ (resp. $( \cdot)^\vee$) denotes the tensorization (resp. the dual) in topological K-theory, $c_1(\cdot)$ denotes the first Chern class, and for a proper morphism $f: Z \to Y$ of complex quasi-projective algebraic varieties $f_*$ denotes the pushforward under a proper morphism in topological K-theory. The reader can verify that the properties of this $\lambda_{\mc E}$ discussed in \cite[Lem 4.3]{Perego-Rapagnetta-second-cohomology} hold true also in the present setting.

The compatibility of this homomorphism with  birational morphisms induced by Fourier-Mukai transforms is checked as in \cite[Prop. 2.4 ]{Yoshioka-moduli-abelian}.

Before defining the (\ref{vperpform}) we need the following Lemma.

\begin{lemma} \label{lem universal fam}
Let $\mc X \to T$ be a family of cubic $4$ folds. Suppose that there is a section $v$ of $\wt H (\Ku(\mc X / T))$ which stays algebraic for every $t \in T$ and a family $\sigma_T=\{\sigma_t\}_{t \in T}$ of stability conditions as in \cite[Def. 21.15]{BLMNPS}. Let $f: \mc M_v(\mc D_T, \famsigma) \to T $ be the relative moduli space as in \cite[Thm. 21.24]{BLMNPS} and let $f^\circ: \mc M^s_v(\mc D_T, \famsigma) \to T $ be the  restriction to the stable locus. 
\begin{enumerate}
\item Then there exists a quasi-universal family $\mc E_T$ on $ \mc M^s_v(\mc D_T, \famsigma)$.
\item The family $\mc E_T$ defines a relative version of (\ref{vperpformstable}), i.e. a morphism of local systems
\[
\begin{aligned}
\lambda_{\mc E_T}: v^\perp & \longrightarrow R^2_{f^\circ_*}(M^s_v(\mc {K}u(X), \sigma), \mathbb Z)\\
 x & \longmapsto c_1 ({p_{T,1}}_*(p_{T,2}^*(x^\vee) \otimes_{top} [ \mc E])).
\end{aligned}
\]
\end{enumerate}
\end{lemma}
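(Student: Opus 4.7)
The plan is to adapt the construction of the quasi-universal family given in Theorem \ref{thm gms}(3) to the relative setting, and then to observe that the resulting Mukai-type homomorphism respects base change because topological $K$-theory is a homotopy invariant. Throughout, I will write $\mathfrak M^s := \mathfrak M_v^s(\mc D_T,\famsigma)$ for the stable substack of the relative moduli stack and $\mc M^s := \mc M^s_v(\mc D_T,\famsigma)$ for the corresponding relative good moduli space, with structure morphism $f^\circ \colon \mc M^s \to T$.

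\textbf{Step 1: Existence of a quasi-universal family.} The argument of Theorem \ref{thm gms}(3) goes through essentially verbatim in families. Because $\mathfrak M^s \to \mc M^s$ is a $\mathbb G_m$-gerbe (the automorphism group of a stable object is $\mathbb G_m$), the étale sheafification of the rigidification of the relative stack represents $\mc M^s$ by \cite[Cor.~4.3.3]{Lieblich}. Therefore there exists an étale cover $U \to \mc M^s$ that lifts to a morphism $U \to \mathfrak M^s$, and hence carries a universal family $\mc E_U \in \mc D_{U\text{-perf}}(\mc X \times_T U)$. Since $\mathfrak M^s \to \mc M^s$ is a $\mathbb G_m$-gerbe, any two such universal families on overlaps $U \times_{\mc M^s} U$ differ by a line bundle pulled back from $\mc M^s$; after replacing $\mc E_U$ by $\mc E_U^{\oplus N}$ with $N$ divisible enough, and using the Gluing Lemma \cite[Thm.~3.2.4]{BBD} (cf.~the proof of \cite[Thm.~A.4]{Mukai-Tata}), these local universal families glue to a quasi-universal family $\mc E_T$ on $\mc X\times_T \mc M^s$ of some similitude $\rho$.

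\textbf{Step 2: Construction of the relative Mukai morphism.} With $\mc E_T$ in hand, let $p_{T,1}\colon \mc M^s \times_T \mc X \to \mc M^s$ and $p_{T,2}\colon \mc M^s \times_T \mc X \to \mc X$ be the projections. The assignment
\[
x \longmapsto c_1\bigl({p_{T,1}}_*(p_{T,2}^*(x^\vee) \otimes_{top} [\mc E_T])\bigr)
\]
makes sense for sections $x$ of the local system $\wt{\mc H}(\Ku(\mc X/T),\mathbb Z)$ (viewed inside $K_{top}^0(\mc X/T)$ through \eqref{MukaivecKtheory}) using the fibrewise-proper pushforward in topological $K$-theory along $p_{T,1}$, which exists because $\mc X \to T$ is proper. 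Orthogonality to $v$ ensures that the resulting class lands in $R^2 f^\circ_* \mathbb Z$, rather than involving the non-trivial rank part. The local system $v^\perp \subset \wt{\mc H}(\Ku(\mc X/T),\mathbb Z)$ is simply the relative orthogonal complement, locally constant over $T$ by definition of the local system \eqref{locsystemMukai}.

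\textbf{Step 3: Morphism of local systems.} It remains to verify that $\lambda_{\mc E_T}$ is a morphism of local systems. The target $R^2 f^\circ_* \mathbb Z$ is a local system since $f^\circ$ is a proper submersion on its fibers over any Whitney stratum (the stable locus is smooth over $T$ by Mukai's generalized theorem, cf.~\cite[Thm.~31.1]{BLMNPS}), so its formation commutes with restriction along $\{t\} \hookrightarrow T$ locally in the classical topology. Topological $K$-theory is a homotopy invariant, so pullback along a point $t\in T$ gives $[\mc E_T]|_{\mc M^s_t \times \mc X_t} = [\mc E_{T,t}]$, and the pushforward commutes with base change by proper base change in topological $K$-theory. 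Therefore for each $t\in T$ the fibrewise morphism is precisely the absolute morphism \eqref{vperpformstable} for the quasi-universal family $\mc E_{T,t}$, which shows that $\lambda_{\mc E_T}$ is indeed a morphism of local systems.

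\textbf{Main obstacle.} The essential technical point is Step~1: ensuring that the quasi-universal family exists globally on the relative stable locus rather than only fibrewise, which requires the relative gerbe structure together with the gluing argument. Once this is secured, Steps~2 and~3 are formal consequences of topological invariance and proper base change.
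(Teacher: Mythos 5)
Your proposal is correct and follows the same route as the paper: the paper's entire proof of this lemma is the single sentence that part (1) follows from the argument of Theorem \ref{thm gms}(3) \emph{mutatis mutandis}, which is exactly your Step 1 (rigidification, $\mathbb G_m$-gerbe, \'etale cover lifting, gluing via \cite{BBD}). Your Steps 2 and 3 supply the formal topological-$K$-theory base-change verification of part (2) that the paper leaves implicit.
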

\begin{proof}
(1) The same proof of part (3) of Theorem \ref{thm gms} goes through, mutatis mutandis.
\end{proof}

Similarly, in the setting of Proposition \ref{prop untwistedK3}, the quasi-universal family on the relative moduli space defines a relative version of (\ref{vperpformstable}) as in (2) of the Lemma above.

The relative version allows us to define (\ref{vperpform}) by deforming to the locus where we know what happens.
\begin{lemma}
Let the notation be as above. The morphism $\lambda_{\mc E}$ defined in (\ref{vperpformstable}) is divisible by $\rho$, i.e., the morphism \begin{equation} \label{mudefinition}
\begin{aligned}
\theta_{\mc E}: v^\perp & \longrightarrow H^2(M^s_v(\mc D, \sigma), \mathbb Z)\\
 x & \mapsto \frac{1}{\rho}\lambda_{\mc E}(x) 
 \end{aligned}
\end{equation}
is well-defined. Moreover, $\mu_{\mc E}$ is independent of the quasi-universal family and compatible with birational morphisms of moduli spaces induced by Fourier-Mukai transforms. 
\end{lemma}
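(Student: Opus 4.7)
The plan is to prove all three statements by reducing them, via the deformation argument already used in the proof of Theorem \ref{thm defclassk3}, to the corresponding statements for Gieseker moduli spaces of semistable sheaves on an untwisted K3 surface with non-primitive Mukai vector not of OG$10$-type. In that classical setting, the three properties are established in \cite[\S 4]{Perego-Rapagnetta-second-cohomology}. Concretely, one chooses a family of moduli spaces $f^{\circ}: \mc M^s_v(\mc D_T, \famsigma) \to T$ over a smooth connected base $T$, constructed either from a family $\mc X \to T$ of cubic fourfolds (as in Lemma \ref{lem universal fam}) or from the family of twisted K3 surfaces provided by Proposition \ref{prop untwistedK3}, such that $\mc M^s_{v,t_1} \cong M^s_v(\mc D, \sigma)$ for some $t_1 \in T$ and $\mc M^s_{v,t_2}$ is a stable Gieseker locus on an untwisted K3 for some $t_2 \in T$. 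By the relative construction in Lemma \ref{lem universal fam}(2) the quasi-universal family extends to a relative quasi-universal family $\mc E_T$ with constant similitude $\rho$ along $T$, and the fiberwise restrictions are $\lambda_{\mc E_t} = (\lambda_{\mc E_T})_t$ on $H^2(\mc M^s_{v,t},\Z)$.

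For the divisibility by $\rho$, I would argue as follows. Since $f^{\circ}$ is a smooth proper morphism (or more generally, since by Proposition \ref{local triviality} the total family is locally trivial in a neighborhood of any fiber), the sheaf $R^2 f^{\circ}_* \Z$ is a local system. The sublattice $\rho \cdot R^2 f^{\circ}_* \Z \subset R^2 f^{\circ}_* \Z$ is preserved by parallel transport, so a section of $R^2 f^{\circ}_* \Z$ is divisible by $\rho$ on one fiber if and only if it is divisible by $\rho$ on every fiber in the connected component. Applying this to $\lambda_{\mc E_T}(x)$ for $x \in v^{\perp}$ and using divisibility on the Gieseker fiber (cf.\ \cite[Def. 4.6 and Prop.\ 4.5]{Perego-Rapagnetta-second-cohomology}) yields the claim, so $\theta_{\mc E} := \rho^{-1} \lambda_{\mc E}$ lands in $H^2(M^s_v(\mc D, \sigma), \Z)$.

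The independence of $\theta_{\mc E}$ from the choice of quasi-universal family, and its compatibility with Fourier-Mukai equivalences, propagate along the same deformation by a similar local-system argument, once one checks the two statements pointwise on the Gieseker fiber (where they are \cite[Lem. 4.3]{Perego-Rapagnetta-second-cohomology} and \cite[Prop. 2.4]{Yoshioka-moduli-abelian}, respectively): given two quasi-universal families $\mc E$, $\mc E'$ with similitudes $\rho$, $\rho'$ on the relative stable locus, the $K$-theoretic class $\rho' \cdot [\mc E] - \rho \cdot [\mc E']$ is the pullback from $M^s_v(\mc D, \sigma)$ of a class whose first Chern class pairs trivially with $v^{\perp}$ under the Mukai construction, and this identity, being one between integral cohomology classes on a fiber bundle, is preserved in the family. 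Compatibility with a Fourier-Mukai kernel is verified at the level of the relative kernel on $\mc M^s_v \times_T \mc X$, matching $\mc E$ with its transform and then invoking Yoshioka's computation on the Gieseker fiber. The main obstacle I would anticipate is verifying carefully that all the constructions (quasi-universal family, similitude, relative $\lambda$) are truly compatible with the base-change used in the deformation, which amounts to checking that the Mukai vector and the stable locus behave well along the relative stability conditions of \cite{BLMNPS}; this has been essentially done in Lemma \ref{lem universal fam} and Proposition \ref{prop untwistedK3}.
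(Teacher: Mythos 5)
Your proposal is correct and follows essentially the same route as the paper: reduce the divisibility of $\lambda_{\mc E}$ by $\rho$ to the Gieseker case via the relative Mukai morphism of Lemma \ref{lem universal fam} and Proposition \ref{prop untwistedK3} (your parallel-transport argument on $R^2 f^\circ_*\Z$ is exactly what the paper's terser ``deform to where the property holds'' means), with the Gieseker case supplied by \cite[Prop.~4.4]{Perego-Rapagnetta-second-cohomology} and Fourier--Mukai compatibility by \cite[Prop.~2.4]{Yoshioka-moduli-abelian}. The only cosmetic difference is that the paper deduces independence of the quasi-universal family pointwise, directly from divisibility together with the scaling identity $\lambda_{\mc E\otimes p_1^*V}(x)=\rk(V)\,\lambda_{\mc E}(x)$ for $x\in v^\perp$, rather than propagating it through the family as you do; both work.
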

Following the standard notation, we call this morphism $\theta_v.$
\begin{proof}
We need to show that for every $ x\in v^\perp$, $\frac{1}{\rho}\lambda_{\mc E}(x) \in H^2(M^s_v(\mc D, \sigma), \mathbb Z)$. This implies the second statement, since by \cite[Lem. 4.3 (3) ]{Perego-Rapagnetta-second-cohomology}, if $x \in v^\perp$, then for every vector bundle $V$ of rank $r$ on $M^s_v(\mc D)$, we have $\lambda_{\mc E\otimes p_1^* V}(x)=r \lambda_{\mc E} $. Hence the desired independence follows from the properties of quasi-universal families.
To prove that $\frac{1}{\rho}\lambda_{\mc E}$ is well defined, by Lemma \ref{lem universal fam} and Proposition \ref{prop untwistedK3} it suffices to show that $\mc D$ can be deformed to one where this property holds. By the Steps in \S  \ref{sec OGYoshPR}, this follows from Prop. 4.4 (2) of \cite{Perego-Rapagnetta-second-cohomology}, which asserts that the result is true for Gieseker moduli spaces of sheaves on K3 surfaces, and by the compatibility of the morphism $\lambda$ with Fourier-Mukai transforms (see \cite[Prop. 2.4]{Yoshioka-moduli-abelian}).
\end{proof}

The same argument that shows that $\theta_v$ is well-defined shows that $\theta_v$ is an isomorphism, because up to deformations and birational transformations we can reduce the statement to Gieseker moduli spaces where it is know by \cite{Perego-Rapagnetta-second-cohomology}.
\end{proof}

\section{The case of even dimensional Gushel-Mukai manifolds} \label{sect GM}

In this last section we mention the fact  that, by using instead of \cite{BLMNPS} the corresponding results from \cite{Perry-Pertusi-Zhao},  all the main results of this paper ( i.e. Theorems \ref{thmproj}, \ref{locally fact}, \ref{thmisv} and \ref{vperp}) hold for moduli spaces of objects in the Kuznetsov component of four and six dimensional Gushel-Mukai manifolds, provided the class is not of OG$10$ type and the stability condition $v$-generic. In fact, even more can be proved. Namely that these theorems hold  for any K3 category $\mc D$ with the following properties. First of all, stability conditions have to exists and so do good moduli spaces of Bridgeland semistable objects. Second, we need that $\mc D$ can be deformed to a  category which is equivalent  to the derived category of a (twisted) K3 surface in such a way that family of stability conditions and the corresponding relative moduli spaces exist, and that the stability conditions obtained by specialization of these families of stability conditions to the K3 locus belong to the main connected component $\Stab^\dagger$. (In the proof above we have used the existence of families of cubic fourfolds with the property that there are infinitely many points where the category is equivalent to the derived category of a (twisted) K3 surface; however,  with a little extra care of what happens for non $v$-generic polarizations, it is enough to ask that one point has this property rather than infinitely many.)


\bibliographystyle{alpha}

\bibliography{ModuliKuznetsovBiblio.bib}

\end{document}